\documentclass[12pt]{article}
\usepackage{hyperref}
\usepackage{graphicx}
\usepackage{amssymb,amsmath,amsthm}
\usepackage{bbold}
\usepackage{bm}
\def\RPt{{\bR {\mathrm P}^2}}
\def\QPt{{\bQ {\mathrm P}^2}}
\def\emb{\hookrightarrow}

\def\mucube{\hbox{$\mu$-cube}}
\def\mupar{\hbox{$\mu$-parallelepiped}}
\def\mupars{\hbox{$\mu$-parallelepipeds}}

\def\irr{\mathop{irr}}

\def\sp{\mathop{span}}

\def\rk{\mathop{rk}}
\def\zone{\cD}
\def\fc{{\frak c}}
\def\fd{{\frak d}}
\def\cE{{\cal E}}
\def\rational{{\bQ}}

\def\bH{{\boldsymbol{B}}}
\def\bT{{\mathbb T}}

\def\bZ{\mathbb{Z}}
\def\bC{\mathbb{C}}
\def\bQ{\mathbb{Q}}
\def\bR{\mathbb{R}}
\def\bS{\mathbb{S}}

\def\cH{{\cal H}}
\def\cM{M}

\def\cC{{\cal C}}
\def\cD{{\cal D}}
\def\cE{{\cal E}}

\def\cK{{\cal K}}
\def\cH{{\cal H}}

\def\cM{{\cal M}}

\def\cF{{\cal F}}
\def\cU{{\cal U}}

\def\real{\bR}

\newtheorem{definition}{Definition}
\newtheorem{theorem}{Theorem}

\newtheorem{example}{Example}

\newtheorem{remark}{Remark}

\newtheorem{proposition}{Proposition}
\newtheorem{conjecture}{Conjecture} 

\usepackage{array}
\newcolumntype{S}{>{\centering\arraybackslash} m{.475\linewidth}}
\newcolumntype{T}{>{\centering\arraybackslash} m{10.5cm}}
\newcolumntype{U}{>{\centering\arraybackslash} m{1.5cm}}
\bibliographystyle{plain}
\title{%
  A survey on quasiperiodic topology
}
\author{Roberto De Leo}
\begin{document}
\maketitle
\begin{abstract}
  This article is a survey of the Novikov problem of the structure of leaves of the foliations induced by a collection of closed 1-forms in a compact manifold $M$.
  Equivalently, this is to the study of the level sets of {\em multivalued} functions on $M$. To date, this problem was thoroughly investigated only for
  $M=\bT^n$ and multivalued maps $F:\bT^n\to\bR^{n-1}$ in three different particular cases: when all components of $F$ but one are multivalued,
  started by Novikov in 1981; when all components of $F$ but one are singlevalued, started by Zorich in 1994; when none of the components
  is singlevalued, started by Arnold in 1991. The first two problems can be formulated as the study of the level sets of certain quasiperiodic functions,
  the last as level sets of pseudoperiodic functions. In this survey we present the main analytical and numerical results to date and some physical phenomena
  where they play a fundamental role.
\end{abstract}
\section{Introduction}
About a century ago, M. Morse~\cite{Mor25} discovered a deep, beautiful and far-reaching relation -- what we now call {\em Morse theory}~\cite{Mil63} --
between topology and analysis, namely the fact that many fundamental topological information are encoded in the set of critical points of each
``generic enough'' function (Morse function). In other words, given a topological space $M$, several of its fundamental properties -- that, thanks to a celebrated
result of Gelfand and Naimark~\cite{GN47}, are known to be fully encoded inside $C(M)$ -- can also be extracted from elements of the space
of much more regular functions $C^\infty(M)$.

We recall that Morse theory is a much more powerful tool for the category of {\em compact} manifolds than for the one of non-compact ones,
due to the much larger geometrical and topological freedom the latter ones enjoy. Nevertheless, it is reasonable to think that there
exist some ``well-behaving'' classes of non-compact manifolds that are close enough, in some sense, to the compact case that some strong 
general geometrical and topological property holds for them. A trivial example of such class is the set of periodic submanifolds of $\bR^n$,
since they are all regular covers of  compact submanifolds of $\bT^n$ and therefore can be treated with the (more powerful) tools of
Morse theory for {\em compact} manifolds.

At the beginning of the Eighties, S.P. Novikov~\cite{Nov81b,Nov82,Nov86} identified a class that represents a natural first step from the compact to the
non-compact case, namely manifolds $\hat M$ that are the covering space $p:\hat M\to M$, with discrete group $\Gamma$, of a compact
manifold $M$ and the linear subspace of $C^\infty(\hat M)$ of functions whose differential $df$ is invariant by $\Gamma$ -- a class clearly
modeled on the paradigmatic case of $\bR^n\to\bT^n=\bR^n/\bZ^n$ and the subspace of $C^\infty(\bR^n)$ of pseudoperiodic
functions (see Sec.~\ref{sec:ppf}).
From a more algebraic point of view, this corresponds exactly to extending Morse theory from {\em functions} to {\em closed
  1-forms}. Indeed, while such a function $f$ on $\hat M$ does not generically descend to a function on $M$ (since, for every $x\in M$,
it has different values at the points $f(\gamma(x))=f(x)$ for all $\gamma\in\Gamma$), the {\em exact} 1-form $df$ on $\hat M$
{\em does} descend to a well-defined {\em closed} 1-form $\omega=p_*(df)$ on $M$; inversely, if $\omega$ is a closed 1-form on $M$,
there exist some abelian covering $p:\hat M\to M$ where $p^*\omega$ is exact. In Novikov's terminology, $f$ (or, by abuse of language,
$\omega$) is a {\em multivalued} function on $M$ and so the corresponding Novikov-Morse theory can be also simply thought
as a Morse theory for multivalued functions. 

The Novikov-Morse theory has two main goals~\cite{Nov91}:
\begin{enumerate}
\item To estimate the number of critical points $m_i(\omega)$ with Morse index $i$ for a closed 1-form $\omega$ on $M$.
\item To investigate the topological structure of the leaves of the foliation $\omega=0$ on $M$, i.e. the level sets of the multivalued function $f$.
\end{enumerate}
The present article is a survey of the second point, developed since Eighties mainly by Novikov and his Moscow school.
We refer the reader interested in the first one to the excellent monographs on the subject by M. Farber~\cite{Far04} and A. Pajitnov~\cite{Paj06}.

The survey is structured as follows.
In Section 2 we present the main results on the Novikov general problem of the structure of level sets of multivalued functions on a compact
manifold $M$. One of the main results is that such level sets are quasiperiodic manifolds, namely they are the (generally infinite) union
of a finite number of compact components.
From Section 3, and throughout the rest of the paper, we set $M=\bT^n$. Following V.I. Arnold~\cite{Arn91}, we call
multivalued functions on $\bT^n$ {\em pseudoperiodic} -- note that each pseudoperiodic function is the sum of a linear function
and a periodic one, see Sec.~\ref{sec:ppf}. Arnold's problem, developed jointly with D.A. Panov and I.A. Dynnikov, is studying the
topology of the level lines of a pseudoperiodic map $f:\bT^n\to\bR^{n-1}$ with {\em all} multivalued components
(namely the linear part of this map has full rank).
Section 4 is dedicated to a very rich and important particular case, namely the level lines of a pseudoperiodic map $f:\bT^n\to\bR^{n-1}$
having exactly one single-valued component and all remaining pseudoperiodic components with linear part only. This case was introduced by Novikov,
that extracted the case $n=3$ from solid state physics literature, and studied it jointly with his pupils A. Zorich, I.A. Dynnikov, S. Tsarev,
A. Maltsev and the present author, and more recently with A. Skripchenko. Besides having a surprisingly rich and complicated topological
and geometrical structure, this case is strictly related to the physics of metals and, in particular, it is essential for the theoretical
first-principle prediction of some important experimental data concerning the magnetoresistance of metals. In this section we present the main
analytical and numerical results and some experimental data of the Fifties that was compared with the theory only recently by
the present author thanks to those results.
Finally, in Section 5
we consider a case somehow ``dual'' to the Novikov one in Section 4, namely the case of multivalued maps $f:\bT^n\to\bR^{n-1}$
where {\em all but one} the components are pseudoperiodic functions with linear part only. This case was introduced by A. Zorich
in Nineties as a generalization of the aforementioned problem of Novikov for $n=3$, that corresponds to the study of foliations induced
on a Riemann surface $i:M^2_g\emb\bT^3$ by the 1-form $\omega=i^*B$, where $B$ is a constant 1-form on the torus. Clearly a closed
1-form $\omega$ built this way is very far from being generic. Increasing the dimension of the torus (with respect to the genus
of the surface) makes the 1-form, loosely speaking, closer and closer to being {\em generic}, allowing the use of more powerful techniques.
Even in this case, the asymptotics of the leaves turned out to be topologically very rich. The main contributions to this field are due to Zorich,
M. Kontsevich and A. Avila.
%
%
%
\section{Quasiperiodic manifolds}
\label{sec:Nov}
%
One of the two main goals of the Novikov-Morse theory is studying the structure of the level sets of multivalued functions
or, equivalently, of the leaves of foliations induced by closed 1-forms.
Before getting to the main results of this Section, we list a few important results on this topic 
predating the Morse-Novikov theory. All these results tend to be mostly about how the leaves wind inside
the space rather than on the structure of the leaves; moreover, the authors seem to be unaware of any application of the subject
outside of the field of geometry. 

We will need throughout the article the following definition:
%
\begin{definition}
  The {\em degree of irrationality} of a closed 1-form $\omega$ on a compact manifold $M$ is
  $\irr\omega=\dim_\bQ\langle\int_{\gamma_1}\omega,\dots,\int_{\gamma_r}\omega\rangle_\bQ$, namely
  the dimension over $\bQ$ of the vector space spanned in $\bR$ by $\kappa_i=\int_{\gamma_i}\omega$,
  where the $\gamma_i$ are a base for $H_1(M,\bZ)$ and $r=\dim_\bR H_1(M,\bR)$ is the first Betti number of $M$.
  Clearly the irrationality degree of $\omega$ ranges from 1 to $r$. We say that $\omega$ is {\em rational} if $\irr\omega=1$ and
  {\em fully irrational} if $\irr\omega=\rk H_1(M,\bZ)$.
\end{definition}
The first fundamental result in this field is perhaps the following~\cite{Kol53}:
\begin{theorem}[Kolmogorov (1953)]
  Any closed 1-form on the 2-torus without critical points is smoothly conjugate to a constant 1-form.
\end{theorem}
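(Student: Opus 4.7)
The plan is to split into rational and irrational period cases and construct in each an explicit smooth diffeomorphism $\phi: \bT^2 \to \bT^2$ with $\phi^*\omega = \omega_0 := a\,d\theta_1 + b\,d\theta_2$, where $a, b$ are the periods of $\omega$ over a basis of $H_1(\bT^2,\bZ)$. The degenerate case $(a,b)=(0,0)$ is immediately excluded: then $\omega = df$ would vanish at the extrema of $f$ on the compact $\bT^2$. I note that Moser's homotopy trick does \emph{not} apply directly here, since $(1-s)\omega_0 + s\omega$ can acquire zeros along the way, so the argument must proceed geometrically.

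If $\irr\omega = 1$, I would first apply an $\SL(2,\bZ)$ change of basis to reduce to periods $(a,0)$. Then $\omega/a$ integrates to a smooth submersion $\bT^2 \to \bR/\bZ$ (a submersion because $\omega$ is nowhere zero), which is a locally trivial fibration by Ehresmann's theorem; a smooth transverse section trivializes it and straightens $\omega$ to $a\,d\theta_1$.

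For the irrational case, I would pick a smooth simple closed transversal $\gamma$ to $\ker\omega$ and parameterize it by $\tau = \int\omega \in \bR/T_\gamma\bZ$ with $T_\gamma = \oint_\gamma\omega$. The first-return map $f:\gamma\to\gamma$ is a smooth orientation-preserving circle diffeomorphism with irrational rotation number (else a closed leaf would force rational periods). The key step --- and the main obstacle --- is showing that $f$ is a \emph{rigid} translation $\tau\mapsto\tau+\alpha$ in the $\tau$-coordinate. This will follow from a local Stokes argument: for $p,q\in\gamma$ close with images $p'=f(p), q'=f(q)$, the ``foliation rectangle'' bounded by the $\gamma$-arcs $pq$, $p'q'$ and the leaf arcs $pp'$, $qq'$ is an embedded disc, so $\oint\omega=0$; since $\omega$ vanishes along leaves, this reduces to $\tau(q)-\tau(p)=\tau(q')-\tau(p')$, giving $f'(\tau)\equiv 1$.

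To upgrade this rigidity to a global smooth straightening, I would introduce an invariant area form. Pick any vector field $X$ tangent to the foliation with $X\ne 0$, and a 1-form $\beta$ with $\beta(X)=1$, normalized modulo $\omega$ so that $\oint_\gamma\beta=0$. Set $\Omega := \omega\wedge\beta$; then $i_X\Omega = -\omega$ and $\cL_X\Omega = -d\omega = 0$, so the $X$-flow preserves $\Omega$. In coordinates $(\tau,t)$ on $\bR^2$ defined by $\Psi(\tau,t)=\phi_t(\gamma(\tau))$, a direct calculation gives $\omega=d\tau$ and $\Omega=d\tau\wedge dt$, with deck transformations $(\tau,t)\mapsto(\tau+T_\gamma,t)$ and $(\tau,t)\mapsto(\tau+\alpha,t-T(\tau))$, where $T(\tau)$ is the first-return time. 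A second Stokes argument on the loop formed by the leaf arc from $\gamma(\tau)$ to $\gamma(\tau+\alpha)$ and the $\gamma$-arc back yields $T(\tau)=T_0 + B(\tau+\alpha)-B(\tau)$, with $B(\tau)=\int_0^\tau\beta|_\gamma$ smooth and $T_\gamma$-periodic (thanks to $\oint_\gamma\beta=0$). The further change $(\tau,t)\mapsto(\tau,t+B(\tau))$ then turns the second deck into a pure translation $(\tau,t)\mapsto(\tau+\alpha,t-T_0)$, exhibiting $\bT^2\cong\bR^2/\Lambda$ with $\omega=d\tau$; a final linear identification of $\Lambda$ with $\bZ^2$ produces the constant form with the prescribed periods, namely $\omega_0$. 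The specific structure of $T$ as an integral of $\beta$ along leaves is exactly what makes the relevant cohomological equation automatically solvable, bypassing any small-divisor or Herman-type smoothness obstructions.
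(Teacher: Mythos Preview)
The paper does not prove this theorem; it merely cites it as a classical result of Kolmogorov (later extended by Arnold and Kozlov). So there is no ``paper's own proof'' to compare against, and your attempt must be judged on its own.

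Your rational case and your first Stokes argument are correct and standard: the foliation rectangle is an embedded disc, $d\omega=0$, $\omega$ vanishes along leaves, and you correctly conclude that the first-return map on $\gamma$ is a rigid translation in the $\tau=\int\omega$ parameter.

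The gap is in the ``second Stokes argument.'' Your claimed identity $T(\tau)=T_0+B(\tau+\alpha)-B(\tau)$, with $B(\tau)=\int_0^\tau\beta|_\gamma$, is equivalent to the assertion that $\oint_{L_\tau}\beta$ is \emph{independent of $\tau$}, where $L_\tau$ is the loop (leaf arc from $\gamma(\tau)$ to $\gamma(\tau+\alpha)$) $+$ ($\gamma$-arc back). The loops $L_\tau$ are all homologous, so this constancy would follow from Stokes \emph{if $\beta$ were closed}. But you only require $\beta(X)=1$; such a $\beta$ is not closed in general, and $\oint_{L_{\tau'}}\beta-\oint_{L_\tau}\beta=\int_R d\beta$ has no reason to vanish. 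Worse, arranging a closed $\beta$ with $\beta(X)=1$ amounts to solving $Xf=1-\eta(X)$ for a smooth $f$ on $\bT^2$ (with $\eta$ a closed representative of the right cohomology class), which is precisely the cohomological equation along the flow of $X$ --- the very small-divisor problem you claim to bypass. So as written, the step is circular: the ``explicit'' $B$ only works once you already know the cocycle $T$ is a smooth coboundary.

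The theorem \emph{is} true without Diophantine hypotheses, but the correct mechanism is different from what you sketch. One clean route: show directly that there exists a \emph{closed} 1-form $\eta$ with $\omega\wedge\eta$ nowhere zero (equivalently, that the divergence-free vector field $X$ defined by $i_X\Omega_0=\omega$ admits a smooth global cross-section on which the induced return map, being area-preserving on a circle, is automatically smoothly conjugate to a rotation); then the primitives of $(\omega,\eta)$ on the universal cover give global coordinates in which both deck transformations are translations. Your invariant area form $\Omega=\omega\wedge\beta$ is the right object to exploit, but its invariance alone does not force $T(\tau)$ to be a smooth coboundary --- you need to extract the second closed form from it.
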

This result was generalized to $\bT^3$ by Arnold~\cite{Arn92} in 1992 and to any $\bT^n$ by V.V. Kozlov~\cite{Koz07} in 2006.

In Forties Maier proved the following important result:
\begin{theorem}[Maier (1943)]
  \label{thm:mc}
  Any close Morse 1-form $\omega$ on a compact manifold $M$ decomposes it in several connected components of
  two types: {\em periodic components} $\cK_i(M,\omega)$ and {\em minimal components} $\cM_j(M,\omega)$.
  Periodic components are union of compact level sets of $\omega$, while every nonsingular level set in a minimal
  component is dense in it. 
\end{theorem}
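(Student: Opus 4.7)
My plan is to decompose $M$ in two stages: first isolate the locus of compact regular leaves as an open set whose components will give the periodic pieces $\cK_i$, then analyze the closures of the remaining noncompact leaves to produce the minimal pieces $\cM_j$. Throughout I write $Z(\omega)\subset M$ for the finite zero set of the Morse $1$-form and $M^*=M\setminus Z(\omega)$. On $M^*$ the form $\omega$ is nonvanishing and closed, so it defines a smooth codimension-$1$ foliation $\cF$ which is locally the level structure of a primitive $\omega=df$. Call a leaf $L\subset M^*$ \emph{compact} if $L$ is closed in $M$, \emph{noncompact} otherwise, and let $C\subset M^*$ be the union of the compact leaves.

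\textbf{Step 1 (periodic components).} The first claim is that $C$ is open. Given a compact regular leaf $L$, write $\omega=df$ on a neighborhood with $L=f^{-1}(0)$ and $df\ne 0$ there; integrating the gradient of $f$ for an auxiliary Riemannian metric and using compactness of $L$ to secure a uniform existence time, one builds a tubular diffeomorphism $L\times(-\varepsilon,\varepsilon)\to U\subset M$ taking $\omega$ to $dt$, so every nearby leaf is also compact. The periodic components $\cK_i(M,\omega)$ are then defined as the closures in $M$ of the connected components of $C$; by construction each is a union of compact regular leaves together with (possibly) some zeros of $\omega$ on its boundary.

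\textbf{Step 2 (minimal components).} Let $N$ be a connected component of $M\setminus\bigcup_i\cK_i$ and pick a regular noncompact leaf $L\subset N\cap M^*$. Its closure $\bar L$ is a closed $\cF$-saturated subset of $N$, and by Zorn's lemma it contains a nonempty minimal closed saturated subset $K$. I would pick a short transversal arc $\tau$ to $\cF$ at a regular point of $K$ and study $K\cap\tau$ via the first-return map. The decisive property coming from closedness of $\omega$ is that $\omega|_\tau$ is a nonvanishing $1$-form giving a length coordinate on $\tau$, and the holonomy/return maps along $\cF$ are translations in this coordinate (no Denjoy pathology is possible). Poincar\'e--Bendixson/Maier-type arguments for translation dynamics then force $K\cap\tau$ to be either a single point, in which case $K$ would be a compact leaf sitting inside $N$ and contradicting $N\cap\cK_i=\emptyset$, or all of $\tau$, in which case $K=N$. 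Hence every noncompact regular leaf $L'\subset N$ satisfies $\bar{L'}\supset K=N$, so it is dense in $N$, and one sets $\cM_j(M,\omega):=N$.

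\textbf{Main obstacle.} The delicate point is the transversal analysis in Step 2: ruling out intermediate minimal sets sitting strictly between a single compact leaf and the whole component $N$. What makes this feasible is precisely the closedness of $\omega$, which upgrades the abstract holonomy of $\cF$ to a genuine translation action on transversal coordinates and thereby enables the classical dichotomy above. Finiteness of the whole decomposition then follows from finiteness of $Z(\omega)$, which bounds the combinatorial complexity of how transverse sections are cut into arcs by the singular leaves.
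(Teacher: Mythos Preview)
The paper does not actually prove this theorem: it is quoted as a classical result of Maier and stated without any accompanying proof or proof outline, so there is nothing in the paper to compare your argument against.

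As for the argument itself, your two-step strategy is the standard one and is essentially correct. Step~1 (openness of the union of compact leaves via a product tubular neighborhood) is fine. In Step~2, the point you flag as the ``main obstacle'' really is where all the content lies, and your sketch is a bit too quick there. The first-return map to a short transversal $\tau$ is not a single translation but an interval exchange transformation: the finitely many Morse zeros cut $\tau$ into finitely many subarcs on each of which the return map is a translation in the $\omega$-length coordinate. The exclusion of an exceptional (Cantor-type) minimal set then goes as follows: if $K\cap\tau$ were perfect and nowhere dense, take a complementary open interval $I$; under the return map its images are pairwise disjoint intervals of the same $\omega$-length, hence only finitely many fit inside $\tau$, so some iterate fixes $I$ and is therefore the identity on a neighborhood, forcing the endpoints of $I$ to lie on compact leaves---contradicting that they belong to $N$. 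With this filled in, your dichotomy (isolated point versus all of $\tau$) is justified and the conclusion follows. You might also want to say explicitly why the minimal set $K$ contains a regular point (it does, since $Z(\omega)$ is finite and a finite saturated set would consist of zeros only, which cannot be the closure of a noncompact leaf).
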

\begin{example}
  Consider the case $M=\bT^2$ and $\omega$ constant. Then, when $\irr\omega\leq1$, $\cK(\bT^2,\omega)=\bT^2$
  (and so $\cM(\bT^2,\omega)=\emptyset$) while, when $\irr\omega=2$, $\cM(\bT^2,\omega)=\bT^2$
  (and so $\cK(\bT^2,\omega)=\emptyset$).
\end{example}
Some further important general result was found in Seventies~\cite{Tis70,Ima79}:
\begin{theorem}[Tishler, 1970]
  Suppose that $M$ admits a closed 1-form without singular points. Then $M$ is a fiber bundle over $\bS^1$.
\end{theorem}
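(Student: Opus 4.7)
The plan is to reduce the general case to the rational-period case via approximation, handle the rational case by explicit integration, and then invoke Ehresmann's fibration theorem. Throughout I assume $M$ is compact (the implicit setting of this section).

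First I would argue that the periods of $\omega$ may be assumed rational, up to replacing $\omega$ by a nearby closed 1-form $\omega'$. The set of closed 1-forms without zeros is $C^0$-open in the space of closed 1-forms, since being nowhere vanishing is an open condition and $M$ is compact. On the other hand, the map $\omega \mapsto [\omega] \in H^1(M,\bR)$ sends a neighborhood of $\omega$ onto a neighborhood of $[\omega]$, because given any small cohomology perturbation I can represent it by a small (harmonic, say) closed 1-form. Since $H^1(M,\bQ)$ is dense in $H^1(M,\bR)$, I can choose a rational class arbitrarily close to $[\omega]$ and realize it by a closed 1-form $\omega'$ that is $C^0$-close to $\omega$, hence still nowhere vanishing. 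Multiplying by a positive integer, I may further assume $\omega'$ has integer periods, i.e.\ $[\omega'] \in H^1(M,\bZ)$.

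Next I would construct the map to the circle from $\omega'$. Fix a basepoint $x_0 \in M$ and define
\begin{equation*}
  f : M \longrightarrow \bR/\bZ = \bS^1, \qquad f(x) = \int_{x_0}^{x} \omega' \pmod{\bZ}.
\end{equation*}
Because $\omega'$ is closed with integer periods, this is well-defined independently of the path chosen from $x_0$ to $x$, and it is smooth with $df = \omega'$. Since $\omega'$ vanishes nowhere, $f$ is a submersion.

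Finally I would apply Ehresmann's theorem: any proper surjective submersion is a locally trivial fiber bundle. Properness is automatic because $M$ is compact; surjectivity onto $\bS^1$ follows from the image being a nonempty open and closed subset of the connected space $\bS^1$. Hence $f : M \to \bS^1$ is a fiber bundle, as required.

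The main obstacle I anticipate is the density/approximation step: one needs to know that rational perturbations of the cohomology class can be realized by nowhere-vanishing representatives. This is where compactness of $M$ is essential, giving a uniform lower bound $\lvert \omega_x \rvert \geq \delta > 0$ in some auxiliary metric, so that a sufficiently small $C^0$-perturbation cannot introduce zeros. The rest of the argument—integration and Ehresmann—is routine once this reduction has been carried out.
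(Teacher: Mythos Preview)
Your argument is correct and is in fact the classical proof of Tishler's theorem. The paper itself does not supply a proof of this statement; it is listed among background results predating the Novikov--Morse theory and is simply cited. That said, the mechanism you use---rational approximation of the cohomology class, integration to a circle-valued map, and the observation that a submersion from a compact manifold to $\bS^1$ is a fiber bundle---is precisely what the paper invokes a few lines later in its proof outline of the Novikov--Zorich theorem, where it writes that a rational $\omega_0$ ``induces a Morse function over the circle $f_\omega:M\to\bS^1$'' and that ``since $f$ has no critical points, $f:M\to\bS^1$ is a fiber bundle.'' So your approach is fully aligned with the paper's implicit treatment.
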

\begin{theorem}[Imanishi, 1979]
  \label{thm:Ima}
  If $\omega$ is a closed Morse 1-form on $M^n$, then any open level set of $\omega$ is dense in the minimal component
  that contains it. If $\irr\omega\leq1$, then all its level sets are compact. Moreover, if $\omega$ has no critical points
  of index 1 and $n-1$, then either $\cK(M^n,\omega)=M^n$ (and $\irr\omega\leq1$) or $\cM(M^n,\omega)=M$
  (and $\irr\omega>1$).
\end{theorem}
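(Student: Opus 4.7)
The plan is to handle the three assertions in order, reducing each to Maier's decomposition (Theorem~\ref{thm:mc}) augmented with a cohomological computation for Part~2 and a Morse-theoretic handle argument for Part~3.

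For the density statement in Part~1, I would work inside a fixed minimal component $\cM_j$, where Maier's theorem already gives density of every \emph{nonsingular} leaf. An arbitrary open (i.e.\ non-compact) level set $L\subset\cM_j$ is the union of finitely many nonsingular leaf-pieces and finitely many critical points; since each nonsingular piece is dense in $\cM_j$ by Maier, so is $L$.

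For the compactness statement in Part~2, the point is that $\irr\omega=1$ is equivalent to $[\omega]\in H^1(M,\bR)$ being a real multiple of an integer class: all periods $\int_{\gamma_i}\omega$ are rational multiples of a common $\lambda\in\bR$, so for a suitable $N\in\bN$ the form $N\omega/\lambda$ is integral and integrates to a smooth map $\theta:M\to\bS^1$. The level sets of $\omega$ then coincide with those of $\theta$, which are preimages of points under a continuous map from a compact to a Hausdorff space, hence compact.

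For the dichotomy in Part~3, a local handle-theoretic analysis is the core. Passing through an index-$i$ critical value of $\omega$ amounts to attaching an $i$-handle to the regular level set; the attaching sphere $S^{i-1}$ and the belt sphere $S^{n-i-1}$ are simultaneously connected precisely when $2\le i\le n-2$, so the number of connected components of the regular levels is invariant across middle saddles, while indices $0$ and $n$ merely produce birth/death of isolated spherical components. Consequently, a ``wall'' between two distinct Maier pieces must pass through a critical point of index $1$ or $n-1$, and in the absence of such critical points the Maier decomposition collapses to a single component: either $\cK(M,\omega)=M$ or $\cM(M,\omega)=M$. The link to $\irr\omega$ is then closed using Part~2: if $\irr\omega\le1$ every leaf is compact and no minimal component exists, forcing $\cK=M$; conversely, if $\cK=M$, all leaves are compact and a Tishler-type argument adapted to Morse singularities presents $\omega$, up to a scalar, as the pullback of $d\theta$ by a smooth map $M\to\bS^1$, forcing commensurability of the periods and $\irr\omega\le1$.

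The main obstacle I anticipate is the middle-saddle analysis: while the local handle calculation immediately shows that regular levels do not split across such a critical point, concluding globally that two prospective Maier components cannot abut along the singular level through a middle saddle requires a delicate use of the connectedness of the link $S^{i-1}\times S^{n-i-1}$ together with a transverse-flow construction matching up the leaves on either side.
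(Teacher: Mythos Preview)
The paper does not supply a proof of this theorem: it is stated as a result of Imanishi~\cite{Ima79} in the historical overview of Section~\ref{sec:Nov}, with no accompanying proof or proof outline, so there is nothing in the paper to compare your argument against.

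That said, your outline is broadly sound. Part~1 is indeed immediate from Maier's theorem as stated in the paper, and Part~2 is the standard observation that a rational cohomology class integrates to a circle-valued map. In Part~3 the handle argument correctly identifies why only index~$1$ and index~$n-1$ saddles can separate Maier components, so the dichotomy $\cK=M$ versus $\cM=M$ follows. The one place where your sketch is thin is the converse implication $\cK(M,\omega)=M\Rightarrow\irr\omega\le1$: invoking ``a Tishler-type argument adapted to Morse singularities'' hides real work, since Tishler's theorem as quoted in the paper assumes \emph{no} critical points. You would need to argue that when all leaves are compact the leaf space carries a well-defined $\bR/\lambda\bZ$-valued parameter for a single $\lambda$, which amounts to showing that the periods of $\omega$ along loops transverse to the foliation are all commensurable; this is where the connectedness of $M$ and the absence of index~$1$ and~$n-1$ saddles are used again, and it deserves an explicit argument rather than an appeal to Tishler.
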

None of these results aimed at ascertain the structure of the level sets themselves, neither the authors appear aware
of the many connections of this field with other parts of mathematics and other sciences. In particular, the problem
of the magnetoresistance in normal metals (see Section~\ref{subsec:mr}) specifically requires the study of such structures,
which is one of the reasons for Novikov's interest in the problem. Starting from the Eighties, he and his Moscow
school have found several fundamental results that we briefly recall in the rest of this section.
%
\begin{definition}[Novikov, 1982]
  We say that a manifold $L$ is {\em periodic} when it is a $\bZ^k$-covering of a compact manifold for some
  integer $k\geq1$. 
\end{definition}
\begin{theorem}[Novikov 1982, Zorich 1988]
  The non-singular leaves of a closed 1-form $\omega$ without singular points and with $\irr(\omega)=k$
  are periodic manifolds with monodromy group $\bZ^{k-1}$.
\end{theorem}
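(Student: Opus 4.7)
The approach is to use the cohomology class $[\omega]\in H^1(M,\bR)$ to factor $\omega$ through a map to a torus, and then analyze the leaves through this factorization. Since $\irr\omega=k$, we can write $[\omega]=\sum_{i=1}^k \kappa_i [\beta_i]$ with $[\beta_i]\in H^1(M,\bZ)$ integer classes and $\kappa_1,\ldots,\kappa_k$ $\bQ$-linearly independent. Choosing closed 1-form representatives $\beta_i$ (and, if necessary, adjusting $\omega$ by an exact form, which does not alter the foliation) we may assume $\omega=\sum_i \kappa_i \beta_i$ as forms on $M$. Integrating the $\beta_i$ from a chosen basepoint yields a smooth map $F\colon M\to\bT^k$ with $\omega=F^*\tilde\omega$, where $\tilde\omega=\sum_i \kappa_i\,dy_i$ is a constant, nowhere-vanishing 1-form on $\bT^k$.

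Next I would study the leaves of $\tilde\omega$ on $\bT^k$. Each such leaf $\tilde L$ is the image under $\bR^k\to\bT^k$ of a hyperplane $\{\sum_i \kappa_i y_i=c\}\subset\bR^k$; by $\bQ$-independence of the $\kappa_i$ this hyperplane meets $\bZ^k$ only at $0$, so $\tilde L$ sits in $\bT^k$ as a dense copy of $\bR^{k-1}$. The core step is to prove that for each leaf $L$ of $\omega$ on $M$ the restriction $F|_L\colon L\to\tilde L$ is a proper surjective submersion. Ehresmann's theorem then presents $L$ as a smooth fiber bundle with compact fiber $\Sigma$ over $\tilde L\cong\bR^{k-1}$; contractibility of the base trivializes the bundle, giving $L\cong\Sigma\times\bR^{k-1}$. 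The standard $\bZ^{k-1}$-action on the $\bR^{k-1}$ factor lifts to a free, properly discontinuous, cocompact action on $L$ whose quotient is the compact manifold $\Sigma\times\bT^{k-1}$, exactly the periodic structure claimed.

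The hard part will be the submersion property of $F|_L$. A direct calculation (using $\ker dF\subset\ker\omega$ since $\omega=F^*\tilde\omega$) shows that $F|_L$ is a submersion at $x$ iff $F$ is, i.e.\ iff $\beta_1(x),\ldots,\beta_k(x)$ are linearly independent in $T_x^*M$. A priori there is no reason this should hold everywhere for a fixed choice of representatives, and one must use the freedom to perturb the $\beta_i$ by exact forms, together with the nowhere-vanishing of $\omega$, to arrange pointwise independence. Properness of $F|_L$ would then come from compactness of $M$ via the pullback cover $\tilde M=M\times_{\bT^k}\bR^k\to M$.

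If the direct argument stumbles at the submersion step, a cleaner alternative is induction on $k$. The base case $k=1$ is immediate from Tishler's theorem: a rational nowhere-vanishing 1-form makes $M$ a fiber bundle over $\bS^1$ whose compact fibers are the leaves (a $\bZ^0$-cover of itself). For $k>1$, pass to the $\bZ$-cover $\bar M\to M$ killing one integer class $[\beta_i]$; on $\bar M$ the pulled-back form has irrationality $k-1$, the inductive hypothesis (suitably extended to this non-compact but cocompact setting) yields a $\bZ^{k-2}$-periodic structure on each leaf of $\bar M$, and composing with the $\bZ$ generated by a deck transformation of $\bar M\to M$ assembles the desired $\bZ^{k-1}$-cover of a compact manifold.
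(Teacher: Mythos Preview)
Your primary route via $F:M\to\bT^k$ runs into a hard obstruction at precisely the step you flag: the submersion condition requires $k$ pointwise linearly independent $1$-forms on $M$, which is impossible whenever $k>\dim M$, and this case does occur for nowhere-vanishing $\omega$. Take for instance $M=M^2_g\times\bS^1$ with $g\geq 2$ and $\omega=d\theta+\sum_{i=1}^{2g}\epsilon_i\alpha_i$, where the $\alpha_i$ represent a basis of $H^1(M^2_g,\bZ)$ and $1,\epsilon_1,\ldots,\epsilon_{2g}$ are rationally independent: then $\omega$ never vanishes (it is nonzero on $\partial_\theta$) and $\irr\omega=2g+1>3=\dim M$. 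No perturbation of the $\beta_i$ by exact forms can manufacture more independent cotangent vectors than the dimension allows, so Ehresmann over $\bR^{k-1}$ is simply unavailable here.

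Your inductive fallback is much closer to the paper's argument, but the paper organizes it differently and supplies a key ingredient your sketch omits. Instead of killing an arbitrary integer class and inducting on a non-compact cover, the paper approximates $\omega$ by a rational form $\omega_0$ chosen \emph{close to $\omega$ as a form} (possible because rational classes are dense in $H^1(M,\bR)$). Closeness buys two things at once: $\omega_0$ is nowhere vanishing, so it defines a genuine fibration $M\to\bS^1$ with compact fibre $L_0$; and on the $\bZ$-cover $\hat M$ the gradient trajectories of the primitive $F$ of $\pi^*\omega_0$ are transverse to the leaves of $\pi^*\omega$. Projecting a leaf $L$ along these trajectories onto $L_0$ then gives a covering $L\to L_0$ whose deck group is $\bZ^{k-1}$, reflecting the $k-1$ remaining independent periods of $\pi^*\omega$ on $\hat M$. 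This reaches the conclusion in one step over a compact base, sidestepping both the dimensional obstruction of your first approach and the non-compact inductive hypothesis of your second.
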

\begin{proof}
The idea of the proof is the following. Since the (cohomology classes of) rational
1-forms are dense in $H^1(M,\bR)$, we can approximated $\omega$ with a rational one $\omega_0$ as
closely as we please and, moreover, we can choose a basis in $H_1(M,\bR)$ so that $\irr(\omega-\omega_0)=k-1$
and $Ann(\omega_0)=\langle\gamma_2,\dots,\gamma_r\rangle$,
$Ann(\omega)=\langle\gamma_{k+1},\dots,\gamma_r\rangle$. Now, the rational 1-form $\omega_0$
induces a Morse function over the circle $f_\omega:M\to\bS^1$, e.g. by fixing any $x_0\in M$ and setting
$f_0(x)=e^{2\pi i\int_{\gamma_{x_0,x}}\omega_0}$,
where $\gamma_{x_0,x}$ is any smooth curve joining $x_0$ and $x$. 
Indeed the topology of the leaves won't change if we multiply $\omega$ by any non-zero constant and so we
can assume WLOG that all the $\kappa_i$ are integers, so that, by choosing a different path from $x_0$ and $x$,
the integral of $\omega$ will change by an integer. Since $f$ has no critical points, $f:M\to\bS^1$ is a fiber bundle
and we can consider the pull-back bundle $\hat M=exp^*M\to\bR$, where $exp:\bR\to\bS^1$ is the exponential map,
so that $\exp(F(\hat m))=f_0(\pi(\hat m))$, where $\pi:\hat M\to M$ is the $\bZ$-covering canonical projection.
In other words, in $\hat M$ the 1-form $\omega_0$ is {\em exact}: $\pi^*\omega_0=dF$. Since in $\hat M$ we
``unbundled'' the cycle $\gamma_1$, then $\irr\pi^*\omega=k-1$ and, since $\omega$ and $\omega_0$ are
``close enough'', the leaves of $\pi^*\omega$, which are diffeomorphic to those of $\omega$, project along the
trajectories induced by any Riemannian metric onto those of $\pi^*\omega_0$ with monodromy group $\bZ^{k-1}$.
In other words, the leaves of $\omega$ are $\bZ^{k-1}$-coverings of some compact manifold.
\end{proof}
When the closed 1-form $\omega$ has critical points, the situation becomes much more complicated and
leads to the {\em quasiperiodicity} of leaves.
\begin{definition}[Novikov, 1982, 1995]
  Let $\{W_1,\dots,W_p\}$ a finite collection of $k-1$-dimensional manifolds with $(k-1)$-stratified boundary,
  namely with maps $\sigma_j:W_j\to\partial I^{k-1}$ onto the boundary of the unit cube and transversal
  along all sub-cubes of lower dimension. A sequence $j:\bZ^{k-1}\to\{1,\dots,p\}$ is {\em admissible} iff
  we can glue the manifold $W_{j_{n_1,\dots,n_{k-1}}}$ along all neighboring faces of the $(k-1)$-lattice.
  The sequence $j$ is {\em quasiperiodic} iff it is quasiperiodic as a function (in the sense of Weyl\footnote{See~\cite{Che11}
  for a short survey on the several types of almost- and quasi-periodic functions.}). 
  Finally, $j$ is {\em special-quasiperiodic} if $j_{n_1,\dots,n_{k-1}}$ is defined as the
  index of the open set $U_i$ the point $x_0+\sum_{i=1}^{k-1} n_i u_i$ belongs to, where $\{U_1,\dots,U_s\}$ are
  disjoint open sets such that $\bT^{k-1}=\cup_{i=1}^s \overline{U_i}$, $x_0$ a point on the torus and
  $u_1,\dots,u_{k-1}$ vectors such that $x_0+\sum_{i=1}^{k-1} n_i u_i$ never falls on a boundary point of the $U_i$.
\end{definition}
It was conjectured by Novikov in~\cite{Nov82} that non-singular leaves of closed 1-forms with singular points
would be quasiperiodic manifolds. The works of A. Zorich~\cite{Zor88}, Le Tu~\cite{LeT88} and Alanya~\cite{Ala91}
around the end of Eighties, the first in case of small perturbations of rational closed 1-forms and the second two
in the general case, proved the following stronger result:
\begin{theorem}[Zorich 1987, Le Tu 1988, Alanya 1991]
  The non-singular leaves of a closed 1-form $\omega$ are special quasiperiodic manifolds.
\end{theorem}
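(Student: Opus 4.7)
The plan is to extend the strategy used for the singularity-free case: approximate $\omega$ by a rational closed 1-form $\omega_0$ with $\irr(\omega-\omega_0)=k-1$, pass to the $\bZ$-cover $\pi:\hat M\to M$ on which $\pi^*\omega_0=dF$ is exact, and describe a non-singular leaf of $\pi^*\omega$ as being assembled, under the Riemannian projection onto the level sets of $F$, from a finite family of compact building blocks whose gluing combinatorics is governed by a torus translation. The new feature with respect to the previous theorem is that $F:\hat M\to\bR$ is now a genuine Morse function, so that the level-set topology changes at each critical value; the leaves of $\pi^*\omega$ are therefore not honest $\bZ^{k-1}$-regular covers of a single compact manifold, but rather glued from many pieces of finitely many types.

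First I would choose $\omega_0$ with integer periods (after rescaling), so that $f_0:M\to\bS^1$ is a Morse circle-valued function, and lift it to $F:\hat M\to\bR$ whose critical values form a $\bZ$-periodic subset of $\bR$. The slabs of $\hat M$ between consecutive critical values of $F$ are products of compact manifolds by an interval, and up to the deck $\bZ$-translation there are only finitely many such types. A non-singular leaf $L$ of $\pi^*\omega$, being $C^0$-close to the foliation by levels of $F$ for $\omega_0$ close enough to $\omega$, meets each slab in a compact piece diffeomorphic to one of finitely many models $W_1,\dots,W_p$. The boundary faces of each piece lie above critical points of $F$ and are naturally labeled by faces of a $(k-1)$-cube, thereby supplying the stratification maps $\sigma_j:W_j\to\partial I^{k-1}$ required by the definition.

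Next I would identify the admissible sequence $j:\bZ^{k-1}\to\{1,\dots,p\}$ recording which block appears at each lattice position of $L$. Fix a transversal to the foliation through one block and follow the intersections of $L$ with it under the $k-1$ independent ``long'' translations available on $\hat M$, corresponding to the cycles dual to the periods generating $\irr(\omega-\omega_0)$. Since $\pi^*\omega$ differs from the exact form $dF$ by a closed 1-form whose periods span a $(k-1)$-dimensional $\bQ$-vector space, the successive intersection points are the orbit of a base point $x_0$ under translations by a collection of $k-1$ generically irrational vectors $u_1,\dots,u_{k-1}$ on a $(k-1)$-torus obtained from the fibers of $f_0$. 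The label $j_n$ is then exactly the index of the open region $U_{j_n}\subset\bT^{k-1}$ containing $x_0+\sum_i n_i u_i$, where the open sets $U_i$ are the components of the complement of the codimension-one strata corresponding to the critical points of $F$. This matches the definition of special-quasiperiodic sequence verbatim and identifies $L$ with a special quasiperiodic manifold.

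The main obstacle is the uniform combinatorial stability of this decomposition across the entire non-compact leaf. One must verify that the local surgery of $L$ near each critical point of $F$ is topologically equivalent to the corresponding Morse surgery of the level sets of $F$, a perturbative statement that becomes delicate far from the basepoint of the cover, and that the rotation vector of the cross-section map is irrational enough, in the sense of Weyl equidistribution, that $x_0+\sum_i n_i u_i$ never lands on the boundary of any $U_i$ for any $n\in\bZ^{k-1}$. The contribution of Zorich handles the perturbative regime of small $\omega-\omega_0$, while the works of Le Tu and Alanya provide the tools needed to remove the smallness hypothesis and reach full generality.
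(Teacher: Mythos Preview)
Your proposal differs from the paper's argument in a structural way, and the difference exposes a genuine gap.

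The paper does \emph{not} pass to a single $\bZ$-cover coming from one rational approximant $\omega_0$. Instead it writes $\omega=\sum_{i=1}^k\kappa_i\omega_i$ in an integral basis of $H^1(M,\bZ)$, uses the $k-1$ integral forms $\omega_1,\dots,\omega_{k-1}$ to produce a map $f:M\to\bT^{k-1}$, and then takes the associated $\bZ^{k-1}$-cover $\hat M$ with its lift $\hat f:\hat M\to\bR^{k-1}$. The building blocks $W_j$ are then simply the pieces $\hat f^{-1}\bigl(I_0^{k-1}+x_0+\sum_i n_i e_i\bigr)$, i.e.\ preimages of unit $(k-1)$-cubes in $\bR^{k-1}$. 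This immediately supplies both the stratified-boundary maps $\sigma_j:W_j\to\partial I^{k-1}$ and the $\bZ^{k-1}$-indexing required by the definition of a special-quasiperiodic manifold; the special-quasiperiodic coding is then literally the cube in $\bT^{k-1}$ into which each lattice translate falls.

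Your route, by contrast, produces a $\bZ$-cover and a real-valued Morse function $F:\hat M\to\bR$. The slabs between consecutive critical values of $F$ are indexed by $\bZ$, not by $\bZ^{k-1}$, and each slab has \emph{two} boundary level sets, not the $2(k-1)$ faces of a $(k-1)$-cube. Your claim that ``the boundary faces of each piece \dots\ are naturally labeled by faces of a $(k-1)$-cube'' is the point where the argument breaks: no such labeling comes for free from a one-variable Morse decomposition. Likewise, the ``$k-1$ independent `long' translations available on $\hat M$'' that you invoke to manufacture a $\bZ^{k-1}$-lattice do not exist: your $\hat M$ has deck group $\bZ$, and the approximate $\bZ^{k-1}$-periodicity you have in mind was, in the previous theorem, a property of the \emph{leaf} obtained via a projection that used the absence of singularities in an essential way. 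Once $F$ has critical points that projection is no longer a covering, and you have not supplied a replacement mechanism that turns the one-dimensional slab decomposition into a genuine $(k-1)$-dimensional cube decomposition with the required boundary stratification.

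In short, the missing idea is precisely the one the paper uses: map to $\bT^{k-1}$ with several integral forms at once, so that the $(k-1)$-cube structure and the torus-translation coding are built into the construction rather than recovered a posteriori from a single circle-valued Morse function.
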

\begin{proof}
The idea for the general case is the following. Given a closed 1-form $\omega$ with $\irr\omega=k$,
we can always find a basis $\{\omega_1,\dots,\omega_r\}$ of $H^1(M,\bZ)$, dual to a basis
$\{\gamma_1,\dots,\gamma_r\}$ of $H_1(M,\bZ)$, such that $\omega=\sum_{i=1}^k\kappa_i\omega_i$
and $\langle\omega_i,\gamma_j\rangle=0$ for $j\geq k+1$ and $i\geq k$.
Similarly to what done above, the forms $\{\omega_1,\dots,\omega_{k-1}\}$ induce a map $f:M\to\bT^{k-1}$
and therefore a minimal covering $\pi:\hat M\to M$ where $\pi^*\omega$ is exact and a map
$\hat f:\hat M\to\bR^{k-1}$ such that $exp(\hat f(\hat m)) = f(\pi(m))$.
Given any generic point $x_0\in\bR^{k-1}$ and basis vectors $\{e_1,\dots,e_{k-1}\}$, the inverse images
$f^{-1}(I_0^{k-1}+x_0+\sum_{i=1}^{k-1} n_i e_i)$ provide exactly the finite number of manifolds the
leaves of $\omega$ can be built with.
\end{proof}
To the knowledge of the author no progress has been done to date in case of level sets of 2 or more closed 1-forms
since the following conjecture~\cite{Nov91,Nov95}:
%
\begin{conjecture}[Novikov, 1991]
  The generic common level of {\em any number} of closed 1-forms is a quasiperiodic manifold.
\end{conjecture}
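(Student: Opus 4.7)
The plan is to extend the strategy of the single-form theorem of Zorich--Le Tu--Alanya to a system of closed 1-forms $\omega_1,\dots,\omega_m$ on a compact manifold $M$. The key organizing invariant should be the joint irrationality $k=\dim_\bQ\langle\int_{\gamma_i}\omega_j\rangle_\bQ$ taken over a basis $\{\gamma_1,\dots,\gamma_r\}$ of $H_1(M,\bZ)$ and all $j=1,\dots,m$. First, I would simultaneously approximate each $\omega_j$ by a rational representative $\omega_j^{(0)}$, choosing the approximations inside a common $\bZ$-lattice in $H^1(M,\bR)$ that is dual to a basis of $H_1(M,\bZ)$ adapted to all $m$ forms at once, so that the rational span of the $\omega_j^{(0)}$ is generated by $K\le k$ integral classes $\eta_1,\dots,\eta_K$ and the residuals $\omega_j-\omega_j^{(0)}$ together carry the remaining $k-K$ irrational periods.

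The rational integral forms $\{\eta_1,\dots,\eta_K\}$ then determine a smooth map $f\colon M\to\bT^K$, and pulling back along the universal cover $\bR^K\to\bT^K$ produces a covering $\pi\colon \hat M\to M$ on which each $\pi^*\omega_j$ is exact. Primitives give a joint map $\hat F=(F_1,\dots,F_m)\colon\hat M\to\bR^m$, and the common level set of the $\omega_j$ on $M$ is the $\Gamma$-quotient of the preimage $\hat F^{-1}(c)$ for a regular value $c$, where $\Gamma\cong\bZ^K$ is the deck group. Because $\omega_j$ and $\omega_j^{(0)}$ can be chosen arbitrarily $C^\infty$-close, standard structural stability of Morse foliations will let me identify $\hat F^{-1}(c)$ with a small deformation of the pullback $\hat f^{-1}(\mathrm{levels})$, which is fibered cleanly over $\bR^K$.

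The third step is the combinatorial one, mirroring the special-quasiperiodic construction in the stated theorem. I would choose a generic base point $x_0\in\bT^K$ and a partition $\bT^K=\bigsqcup_{i=1}^{s}\overline{U_i}$ into open sets so small that, for each $i$, the intersection of the common level set with $f^{-1}(U_i)$ is one of finitely many compact pieces $W_1,\dots,W_p$ with $(K-1)$-stratified boundary prescribed by the decomposition of $\partial U_i$. Weyl equidistribution guarantees that the sequence $j_{n_1,\dots,n_K}$ assigning to each lattice point $x_0+\sum n_i u_i$ the label of the open cell it visits is special quasiperiodic, and pulling this labeling back to $\hat F^{-1}(c)$ exhibits the common level as the gluing of the $W_\alpha$ along a special-quasiperiodic admissible sequence, as required by Novikov's definition.

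The main obstacle will be the transversality and matching issues that arise precisely because $m\ge 2$. In the single-form case the leaves are codimension one and neighboring cubes are glued across a single pair of faces, but when $m\ge 2$ the level manifold has codimension $m$, and the way it meets each boundary stratum of a fundamental cube $I^K$ depends on the joint Morse geometry of the $\omega_j$ — notably on simultaneous or near-simultaneous critical points and on tangencies between the individual leaves. Ensuring that only finitely many diffeomorphism types $W_\alpha$ actually occur, and that the stratified boundaries of adjacent pieces $W_{j_{n}}$ and $W_{j_{n+e_i}}$ agree face-by-face, will require a delicate simultaneous genericity argument for the tuple $(c;\omega_1,\dots,\omega_m)$ that controls all $m$ forms without destroying the joint irrationality structure. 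This simultaneous transversality in the presence of multi-form singularities is, I expect, exactly the analytical obstruction that has left the conjecture open since 1991.
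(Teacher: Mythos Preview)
The statement you are addressing is a \emph{conjecture}, not a theorem: the paper contains no proof of it. Immediately before stating it, the survey says explicitly that ``no progress has been done to date in case of level sets of 2 or more closed 1-forms since the following conjecture,'' and then records Novikov's 1991 statement as open. So there is no ``paper's own proof'' to compare your proposal against.

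Your outline is the natural attempt --- it is precisely the extension of the Zorich--Le~Tu--Alanya argument that anyone would try first --- and you yourself correctly identify where it breaks down. A couple of additional points are worth flagging. First, the dimension of the lattice governing the quasiperiodic labeling is not obviously $K=\dim_\bQ\langle[\omega_j^{(0)}]\rangle$: in the single-form proof the map goes to $\bT^{k-1}$, not $\bT^k$, because one integral class is absorbed into the rational approximation itself, and with $m$ forms the bookkeeping between the rational span, the residual irrational periods, and the minimal cover on which \emph{all} the $\omega_j$ become exact is more delicate than your sketch suggests. Second, the ``structural stability of Morse foliations'' you invoke to pass from $\hat F^{-1}(c)$ to the rational model does not come for free when $m\ge 2$: the joint level set is a transverse intersection of $m$ hypersurface foliations, and near a singular fiber of one $\omega_j$ the intersection with the leaves of the others can change topological type in ways that a $C^\infty$-small perturbation of a single form does not control. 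This is closely tied to, but not identical with, the finiteness-of-types obstruction you name at the end.

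In short: your plan is a reasonable research programme, not a proof, and the paper agrees --- it lists the statement as an open conjecture.
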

A further general result, complementing the result by Imanishi cited above, was found in Nineties by I.A. Melnikova~\cite{Mel95}:
\begin{definition}
  We say that subgroup $\cH\subset H_{n-1}(M^n,\bZ)$ is isotropic if the intersection pairing
  $H_{n-1}(M^n,\bZ)\times H_{n-1}(M^n,\bZ)\to H_{n-2}(M^n,\bZ)$ is identically zero restricted to it.
  We denote by $h(M)$ the largest rank of isotropic subgroups of $H_{n-1}(M^n,\bZ)$.
\end{definition}
\begin{example}
  If $n=2$, $H_1(M^2_g,\bZ)\simeq\bZ^{2g}$ and the largest subgroup over which the intersection product
  is zero has rank $g$.
\end{example}
\begin{theorem}[Melnikova, 1995]
  If $\omega$ is a closed Morse 1-form on $M^n$ and $\irr\omega\geq h(M^n)$, then $\omega$ has a
  non-compact leaf. In particular, a closed Morse 1-form on a Riemann surface $M^2_g$ has a non-compact leaf
  if $\irr\omega> g$.
\end{theorem}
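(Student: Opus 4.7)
The plan is to prove the contrapositive: assume every leaf of $\omega$ is compact; I will show $\irr\omega\le h(M^n)$. By Maier's theorem this compactness hypothesis rules out minimal components, so $M^n$ decomposes into finitely many periodic components $\cK_1,\dots,\cK_p$, each foliated by compact mutually homologous leaves. Picking a representative $L_i\subset\cK_i$, set $\ell_i=[L_i]\in H_{n-1}(M^n,\bZ)$ and $\cH=\langle\ell_1,\dots,\ell_p\rangle$. Since distinct leaves are disjoint, and any single leaf can be pushed along a transversal to a disjoint parallel copy of itself, the intersection pairing vanishes identically on $\cH$; hence $\cH$ is isotropic and its rank $k$ satisfies $k\le h(M^n)$.

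Next I would attach to each $\cK_i$ a transverse height $h_i\in\bR$, defined as $\int_\tau\omega$ along an arc $\tau$ crossing $\cK_i$ once (equivalently, the difference of the values of the primitive $F$ of $\pi^*\omega$ on the two singular leaves bounding $\cK_i$ on the minimal cover $\pi:\hat M\to M$). For any 1-cycle $\gamma$ in $M^n$ put in general position with respect to the foliation, $\int_\gamma\omega$ telescopes across the singular leaves that separate the $\cK_j$, and the signed count of how many times $\gamma$ crosses $\cK_i$ coincides with the intersection number $\gamma\cdot\ell_i$. This yields the key formula
$$\int_\gamma\omega=\sum_{i=1}^p h_i\,(\gamma\cdot\ell_i).$$
Applying this to a basis $\gamma_1,\dots,\gamma_r$ of $H_1(M^n,\bZ)$, and then rewriting the $\ell_i$'s in terms of a $\bZ$-basis of the free part of $\cH$ (of size $k$), expresses every period $\kappa_j=\int_{\gamma_j}\omega$ as an integer linear combination of only $k$ real numbers. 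Hence $\irr\omega\le k\le h(M^n)$; the Riemann-surface corollary then follows from the example $h(M^2_g)=g$ noted above.

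The delicate step is justifying the crossing-count formula rigorously. One has to move $\gamma$ by a small homotopy to a curve transverse to every leaf and avoiding the singular set of $\omega$ (of codimension at least two), and then verify that a sub-arc entering and exiting $\cK_i$ through the same boundary leaf contributes zero while one passing from one boundary to the other contributes exactly $\pm h_i$. Equivalently, in cohomological language the claim is that $[\omega]=\sum_i h_i\,\mathrm{PD}(\ell_i)$ in $H^1(M^n,\bR)$, which can be obtained from a Mayer--Vietoris argument on $M=\overline{\cK_1}\cup\cdots\cup\overline{\cK_p}$ together with the observation that the singular leaves carry no essential $H^1$-class. Once this bookkeeping is in place, the remaining inequality is routine linear algebra.
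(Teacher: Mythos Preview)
The survey does not include a proof of Melnikova's theorem; it merely states the result and cites \cite{Mel95}. So there is nothing in the paper to compare your argument against.

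On its own merits your approach is the standard one and is essentially correct. Two remarks.

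First, your contrapositive yields a non-compact leaf under the hypothesis $\irr\omega>h(M^n)$, not $\irr\omega\ge h(M^n)$ as printed. That is not a defect of your proof: the strict inequality is the correct statement (and is what the ``in particular'' clause uses). Indeed $\omega=dx_1$ on $\bT^n$ has $\irr\omega=1=h(\bT^n)$ with every leaf a compact $(n-1)$-torus, so the non-strict version is simply false.

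Second, the crossing identity $[\omega]=\sum_i h_i\,\mathrm{PD}(\ell_i)$ is the crux, and your sketch is right but can be made cleaner. When all leaves are compact, the leaf space of $\omega$ is a finite graph $\Gamma$ whose edges are the periodic components $\cK_i$ and whose vertices are the singular leaves; the quotient $q:M\to\Gamma$ is continuous, and on each edge $\omega$ is the pullback of a $1$-form of total integral $h_i$. For a loop $\gamma$ avoiding the (codimension-$n$) critical set, $\int_\gamma\omega$ depends only on $q_*[\gamma]\in H_1(\Gamma,\bZ)$ and equals $\sum_i h_i\cdot(\hbox{signed edge count})$; the signed count of traversals of the $i$-th edge is exactly the intersection number $\gamma\cdot\ell_i$. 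This replaces the Mayer--Vietoris bookkeeping and the somewhat vague claim that ``singular leaves carry no essential $H^1$-class''. The remaining linear-algebra step---rewriting the $\ell_i$ in a $\bZ$-basis $m_1,\dots,m_k$ of the free part of $\cH$ to exhibit every period as an integer combination of the $k$ reals $H_j=\sum_i a_{ij}h_i$---is exactly as you describe.
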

Finally, we mention two interesting results found at the end of Nineties by M. Farber et al.~\cite{FKL98}.
\begin{definition}[Calabi, 1969]
  A closed 1-form is {\em transitive} if, for any $x\in M$ where $\omega(x)\neq0$, there exists a smooth loop
  $\gamma:\bS^1\to M$ based at $p$ such that $\omega(\dot\gamma(t))>0$ for all $t$.
\end{definition}
\begin{theorem}[Farber, Katz and Levine, 1998]
  Let $M$ be a compact manifold such that $H_1(M,\bZ)$ has no torsion and the cup-pairing product
  $$
  H^1(M,\bZ)\times H^1(M,\bZ)\to H^2(M,\bZ)
  $$
  is non-degenerate and let $\omega$ be a closed transitive 1-form with maximal irrationality degree.
  Then $\cM(M,\omega)=M$. 
\end{theorem}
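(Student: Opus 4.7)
I will argue by contradiction: suppose $\cM(M,\omega)\neq M$, and derive a contradiction from the three hypotheses (transitivity, maximal irrationality, non-degeneracy of the cup product).

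\textbf{Setup.} By Maier's decomposition (Theorem \ref{thm:mc}), if $\cM(M,\omega)$ is not all of $M$, then $\cK(M,\omega)$ is a non-empty union of periodic components, so $\omega$ admits a compact non-singular leaf $L$. Since $H_1(M,\bZ)$ is torsion-free and the cup product $H^1\times H^1\to H^2$ is non-degenerate, $M$ satisfies Poincaré duality in cohomological degree one: the map $PD: H_{n-1}(M,\bZ)\to H^1(M,\bZ)$ is an isomorphism. Because $\omega$ is transverse to $L$ (it vanishes along $L$ but not on any short transverse arc), $[L]\in H_{n-1}(M,\bZ)$ is non-trivial, and so is its Poincaré dual $\eta_L=PD([L])\in H^1(M,\bZ)$.

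\textbf{The vanishing step.} The plan is to show that $[\omega]\cup\eta_L=0$ in $H^2(M,\bR)$. The cup product evaluated on the fundamental class $[M]$ computes the intersection $[\omega]\cdot[L]$, which in turn can be represented by integration of $\omega$ against a transverse collar of $L$. Since $L$ sits inside a periodic component, a small tubular neighborhood of $L$ looks like $L\times(-\varepsilon,\varepsilon)$ with $\omega$ of the form $f(t)\,dt$; the compact leaves nearby saturate this neighborhood, so the associated period of $\omega$ vanishes. By Poincaré duality $\langle[\omega]\cup\eta,[M]\rangle$ depends linearly on $\eta$, and the above computation exhibits it as zero at $\eta_L$.

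\textbf{Using transitivity and maximal irrationality.} The contradiction will come from showing that $\eta\mapsto[\omega]\cup\eta$ is injective on the subspace of $H^1(M,\bR)$ relevant here. Here is where transitivity enters: by Calabi's characterization, $\omega$ being transitive means its class admits a representative with no singular points, so in particular $[\omega]$ is not trivialized by any cohomological obstruction. Combined with the non-degeneracy of the cup product, this should force the linear map $L_\omega:\eta\mapsto[\omega]\cup\eta$ on $H^1(M,\bR)/\bR[\omega]$ to remain non-degenerate. Maximal irrationality of $\omega$ guarantees that the integral class $\eta_L$ does not lie in $\bR[\omega]$ (since $[\omega]$ is not a real multiple of any rational class, the $\bR[\omega]$-line meets $H^1(M,\bZ)\setminus\{0\}$ trivially). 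Hence $[\omega]\cup\eta_L=0$ together with $\eta_L\neq0$ contradicts the non-degeneracy of $L_\omega$.

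\textbf{Main obstacle.} The analytic computation of Paragraph 2 is essentially a normal-form argument near a compact leaf and is routine. The delicate point is Paragraph 3: upgrading the non-degeneracy of the full cup pairing to the non-degeneracy of the restricted map $L_\omega$, crucially exploiting that transitive $+$ maximally irrational forms are, in a precise sense, ``Lefschetz elements'' for the cup-product algebra. This is where I expect the hypothesis on $H^1(M,\bZ)$ being torsion-free to matter as well, guaranteeing that the integral lattice behaves cleanly under the pairing. Handling this step carefully — most likely by writing $[\omega]$ as a limit of rational classes and using continuity of cup products together with the irrationality of the coefficients $\kappa_i$ — is the substantive content of the proof.
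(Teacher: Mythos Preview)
The paper is a survey and does not include a proof of this theorem; it merely states the result and cites \cite{FKL98}. So there is nothing to compare against directly, but your strategy is essentially right and can be made to work. Three things need fixing.

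\textbf{Where transitivity is actually used.} You invoke transitivity in Paragraph~3, but it belongs in Paragraph~1, to show $[L]\neq0$. A compact nonsingular leaf $L$ is co-oriented by $\omega$; a transitive loop $\gamma$ through a point of $L$ satisfies $\omega(\dot\gamma)>0$ everywhere, so every transverse crossing of $L$ has the same sign, giving $[\gamma]\cdot[L]>0$ and hence $[L]\neq0$ in $H_{n-1}(M,\bZ)$. (Poincar\'e duality then gives $\eta_L\neq0$; this has nothing to do with the cup-product hypothesis, contrary to what you write.)

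\textbf{The vanishing step.} Your Paragraph~2 is dimensionally confused: ``evaluated on $[M]$'' gives a number only when $n=2$. The clean argument is local: in a tubular neighbourhood $L\times(-\varepsilon,\varepsilon)$ with normal coordinate $t$, both $\omega$ and the Thom form $\tau_L$ representing $\eta_L$ are multiples of $dt$, so $\omega\wedge\tau_L\equiv0$ and hence $[\omega]\cup\eta_L=0$ in $H^2(M,\bR)$.

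\textbf{The ``main obstacle'' dissolves.} You do not need $L_\omega:\eta\mapsto[\omega]\cup\eta$ to be injective modulo $\bR[\omega]$ --- that fails already on a surface of genus $g\geq2$, where the kernel is the full symplectic orthogonal of $[\omega]$. What you need, and what the hypotheses give directly, is $\ker L_\omega\cap H^1(M,\bZ)=0$. Write $[\omega]=\sum_i\kappa_i[\omega_i]$ with $\{[\omega_i]\}$ an integral basis of $H^1(M,\bZ)$ and the $\kappa_i$ rationally independent (this is maximal irrationality). Then
$$
0=[\omega]\cup\eta_L=\sum_i\kappa_i\,([\omega_i]\cup\eta_L),
$$
where each $[\omega_i]\cup\eta_L$ lies in $H^2(M,\bZ)$. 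Since $H_1(M,\bZ)$ is torsion-free, so is $H^2(M,\bZ)$ (universal coefficients); rational independence of the $\kappa_i$ then forces $[\omega_i]\cup\eta_L=0$ for every $i$, and non-degeneracy of the cup pairing gives $\eta_L=0$, the desired contradiction. No ``Lefschetz element'' heuristic or limiting argument is needed, and Calabi's characterization of transitivity plays no role.
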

In particular, all non-singular leaves of a fully irrational closed 1-form on $\bT^n$ are dense in the whole space.
%
\section{Pseudoperiodic functions}
\label{sec:ppf}
From this moment on, we will assume throughout the paper that $M^n=\bT^n$.
Following Arnold~\cite{Arn91}, we call {\em pseudoperiodic} the multivalued maps defined on $\bT^n$. 
%
%
By abuse of notation, we will use the same letter to indicate a pseudoperiodic map $\bT^n\to\bR$ and the
corresponding {\em singlevalued} map $\bR^n\to\bR$.
\begin{proposition}
  Any pseudoperiodic function $f$ is  the sum of a linear function $S_f$ with a periodic one $\varepsilon_f$.
\end{proposition}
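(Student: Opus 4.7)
The plan is to unpack what ``pseudoperiodic'' means concretely: $f$ is a multivalued function on $\bT^n$, which we have agreed to identify with a singlevalued function $f:\bR^n\to\bR$ whose differential $df$ is invariant under the deck group $\bZ^n$ of the cover $\bR^n\to\bT^n=\bR^n/\bZ^n$. Equivalently, the closed 1-form $\omega=p_*(df)$ on $\bT^n$ pulls back to $df$. The goal is to split off the ``constant part'' of this 1-form.

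First I would argue at the level of the 1-form on the torus. Any closed 1-form $\omega$ on $\bT^n$ admits a unique harmonic (in fact constant) representative $\omega_0=\sum_i c_i\,dx_i$ in its de Rham cohomology class, where $c_i=\int_{\gamma_i}\omega$ and $\gamma_i$ are the standard basis cycles of $H_1(\bT^n,\bZ)$. Thus $\omega-\omega_0=d\varepsilon$ for some globally defined periodic function $\varepsilon:\bT^n\to\bR$. Pulling back to $\bR^n$ gives $df = \sum_i c_i\,dx_i + d(\varepsilon\circ p)$, so integrating yields $f(x)=\sum_i c_i x_i + \varepsilon(p(x)) + \mathrm{const}$. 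Setting $S_f(x)=\sum_i c_i x_i$ and $\varepsilon_f=\varepsilon\circ p$ (absorbing the constant if desired) gives the claimed decomposition.

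Equivalently, and more elementarily, I would do the splitting directly on $\bR^n$. Since $df$ is $\bZ^n$-invariant, for each standard basis vector $e_i$ the function $f(x+e_i)-f(x)$ has vanishing differential and is therefore a constant $c_i\in\bR$. Define $S_f(x)=\sum_{i=1}^n c_i x_i$, which is linear, and set $\varepsilon_f=f-S_f$. A direct check shows $\varepsilon_f(x+e_j)-\varepsilon_f(x)=c_j-c_j=0$ for every $j$, so $\varepsilon_f$ is $\bZ^n$-periodic, i.e.\ descends to a singlevalued function on $\bT^n$.

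There is essentially no hard step here; the only point worth flagging is that the $c_i$ are well defined (independent of the base point $x$), which follows from the closedness of $\omega$ (equivalently, from $d(f(x+e_i)-f(x))=0$), and that the decomposition $f=S_f+\varepsilon_f$ is unique up to an additive constant, since the only function on $\bR^n$ that is simultaneously linear and $\bZ^n$-periodic is the zero function. The linear part $S_f$ depends only on the cohomology class $[\omega]\in H^1(\bT^n,\bR)$, while the periodic part $\varepsilon_f$ encodes the exact piece of $\omega$.
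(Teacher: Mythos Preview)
Your proposal is correct, and your first argument---pick the constant representative $B_f$ of $[\omega_f]\in H^1(\bT^n,\bR)$, write $\omega_f-B_f=d\varepsilon_f$, and integrate---is exactly the paper's proof, only spelled out with the explicit periods $c_i=\int_{\gamma_i}\omega$. Your second, more elementary argument (computing the constants $c_i=f(x+e_i)-f(x)$ directly and subtracting the corresponding linear form) is a pleasant bonus that bypasses any mention of de Rham cohomology, but it is not needed here.
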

\begin{proof}
  Let $f:\bT^n\to\bR$ be pseudoperiodic. Then $f$ is a singlevalued function on $\bR^n$ and his differential
  $df$ descends to a closed 1-form $\omega_f$ on $\bT^n$. Let $B_f$ be a constant 1-form such that $[\omega_f]=[B_f]$
  in $H^1(\bT^n,\bR)$. Then $\omega_f-B_f$ is {\em exact}, namely $\omega_f-B_f=d\varepsilon_f$ for some
  {\em singlevalued} function $\epsilon_f:\bT^n\to\bR$. In $\bR^n$, $B_f=dS_f$ for a linear function $S_f$, so
  $f=S_f+\varepsilon_f$ modulo constants.
\end{proof}
Arnold refers to {\em Pseudoperiodic Topology} as the study of level sets of pseudoperiodic maps~\cite{PPT99}
and sets the beginning of this field to the studies on vector fields on tori started by H. Poincar\'e. For example,
in this language the Kolmogorov-Arnold-Kozlov theorem mentioned in the previous section reads as follows:
\begin{theorem}[Kolmogorov, 1953; Arnold, 1992; Kozlov, 2006]
  For any pseudoperiodic function $f$ on $\bT^n$ without critical points, there is a global diffeomorphism
  $F$ of $\bT^n$ such that $\varepsilon_{F^*f}=0$.
\end{theorem}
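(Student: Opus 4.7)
The plan is to reduce the statement to the Kolmogorov--Arnold--Kozlov theorem for closed 1-forms (stated earlier in the section), using the dictionary established in the preceding Proposition. The guiding idea is that the periodic remainder $\varepsilon_f$ is trivial precisely when the associated closed 1-form $\omega_f$ is already constant, so once $\omega_f$ can be straightened to a constant form by a torus diffeomorphism, the periodic part of the pulled-back function must vanish.

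First I would pass from $f$ to its associated 1-form: since $f$ is a pseudoperiodic function on $\bT^n$, its differential descends to a closed 1-form $\omega_f$ on the torus, and the hypothesis that $f$ has no critical points means precisely that $\omega_f$ is nowhere vanishing. Applying the Kolmogorov--Arnold--Kozlov theorem for closed 1-forms, I obtain a diffeomorphism $F:\bT^n\to\bT^n$ such that $F^*\omega_f=B$ is a constant 1-form.

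Next I would check that $F^*f$ is still pseudoperiodic and that its associated 1-form is $B$. Lifting $F$ to an equivariant diffeomorphism $\tilde F$ of $\bR^n$, so that $\tilde F(x+k)=\tilde F(x)+\Phi(k)$ for some $\Phi\in\GL(n,\bZ)$, one computes $(f\circ\tilde F)(x+k)-(f\circ\tilde F)(x)=\langle\alpha,\Phi(k)\rangle$, where $\alpha$ is the gradient of the linear part $S_f$. Hence $F^*f$ is pseudoperiodic with linear part whose gradient is $\Phi^*\alpha$, and its associated closed 1-form on the torus is $\omega_{F^*f}=d(F^*f)=F^*\omega_f=B$.

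Finally I would apply the preceding Proposition to $F^*f$, decomposing it as $S_{F^*f}+\varepsilon_{F^*f}$. Since $\omega_{F^*f}=B$ is already constant, the constant 1-form $B_{F^*f}$ that represents its cohomology class can be taken to be $B$ itself, so $d\varepsilon_{F^*f}=\omega_{F^*f}-B_{F^*f}=0$. Thus $\varepsilon_{F^*f}$ is constant, which in the decomposition (defined modulo constants) means $\varepsilon_{F^*f}=0$. The only real obstacle is the closed-1-form version of Kolmogorov--Arnold--Kozlov that is being cited as input; once that is available, the translation is essentially formal, the sole subtle point being the verification in the second step that pullback by a torus diffeomorphism preserves pseudoperiodicity.
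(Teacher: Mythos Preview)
Your proposal is correct and matches the paper's approach: the paper does not give an explicit proof but introduces the statement with ``in this language the Kolmogorov--Arnold--Kozlov theorem mentioned in the previous section reads as follows,'' i.e.\ it presents the pseudoperiodic formulation as a direct translation of the closed 1-form version, which is precisely what your argument carries out in detail.
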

The following general proposition reflects the fact that clearly every level set of a closed 1-form $\omega=B+d\epsilon$
with $B\neq0$ lies between two planes of the foliation $B=0$ and that, while the topology of a pseudoperiodic
manifold ``at a small scale'' can get quite complicated, it is nevertheless very similar to a plane from a ``global'' point of view:
\begin{proposition}
  Every level set of a non-degenerate pseudoperiodic function $f:\bR^n\to\bR^k$ is contained in a finite radius cylinder
  about the $n-k$-plane $S_f=S_f^{-1}(0)$ and has at least one non-compact component.
\end{proposition}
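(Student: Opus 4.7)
The plan is to derive both claims from the decomposition $f=S_f+\varepsilon_f$ of the previous proposition, where ``non-degenerate'' means $\rk S_f=k$ and $\varepsilon_f$ is the continuous, hence bounded, periodic part. Split $\bR^n=V\oplus V^\perp$ with $V=\ker S_f=S_f^{-1}(0)$, so that $S_f|_{V^\perp}$ is a linear isomorphism onto $\bR^k$. For any $x=v+w\in f^{-1}(c)$ the relation $S_f(w)=c-\varepsilon_f(x)$ yields $|w|\le R$ for a constant $R$ depending only on $|c|$, $\|\varepsilon_f\|_\infty$ and the norm of $(S_f|_{V^\perp})^{-1}$. This already proves the cylinder claim $f^{-1}(c)\subset V+B_R(0)$.

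Next I would study the orthogonal projection $\pi_V:f^{-1}(c)\to V$. The cylinder bound makes it \emph{proper}, and a Brouwer-degree argument shows it is also \emph{surjective}: for each $v\in V$ the map $F_v(w):=f(v+w)=S_f|_{V^\perp}(w)+\varepsilon_f(v+w)$ is a proper perturbation of the linear isomorphism $S_f|_{V^\perp}$, and the straight-line homotopy $H_t(w):=S_f|_{V^\perp}(w)+t\varepsilon_f(v+w)$ is proper uniformly in $t\in[0,1]$ (dominated at infinity by the linear term). Hence $\deg(F_v,c)=\deg(S_f|_{V^\perp},c)=\pm1$, and $c\in F_v(V^\perp)$.

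The main step is a path-lifting argument carried out for a regular value $c$ of $f$: then $f^{-1}(c)$ is a smooth $(n-k)$-manifold, and Sard's theorem applied to $\pi_V|_{f^{-1}(c)}$ produces a measure-zero critical-value set $\Sigma\subset V$ over whose complement $\pi_V$ is a finite covering map (proper local diffeomorphism between equidimensional manifolds). Fix any $p_0\in f^{-1}(c)$, choose a generic ray $\ell\subset V$ issuing from $\pi_V(p_0)$ that avoids $\Sigma$, and lift $\ell$ through $p_0$ to a continuous path $\tilde\ell\subset f^{-1}(c)$; unboundedness of $\ell$ together with the cylinder bound forces $\tilde\ell$ to be unbounded in $\bR^n$. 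Being connected, $\tilde\ell$ lies in a single component of $f^{-1}(c)$, which is therefore non-compact.

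The main obstacle is the treatment of non-regular values $c$, where $f^{-1}(c)$ can fail to be a manifold and $\pi_V$ can fail to be a local diffeomorphism. The standard remedy is to approximate $c$ by regular values $c_n\to c$, select unbounded continuous paths $\tilde\ell_n$ in non-compact components of $f^{-1}(c_n)$ starting at points $p_n$ lying over a fixed $v_0\in V$, and use the uniform cylinder bound together with standard equicontinuity-compactness to extract a limiting unbounded continuous curve in $f^{-1}(c)$, which again must reside in a non-compact component.
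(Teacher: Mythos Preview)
The paper is a survey and offers no proof of this proposition; the sentence preceding it (``clearly every level set of a closed 1-form $\omega=B+d\epsilon$ with $B\neq0$ lies between two planes of the foliation $B=0$'') is the only justification given, and it addresses only the cylinder claim. Your treatment of the cylinder bound and of the surjectivity of $\pi_V$ via the proper homotopy $H_t$ is correct and more detailed than anything in the paper.

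There are, however, two genuine gaps in your non-compactness argument. First, for regular $c$, the step ``choose a generic ray $\ell\subset V$ issuing from $\pi_V(p_0)$ that avoids $\Sigma$'' can fail outright: $\Sigma$ is a closed measure-zero set, and such a set can meet every ray from $\pi_V(p_0)$ (take, for instance, a countable union of concentric spheres in $V\cong\bR^{n-k}$ when $n-k\geq2$, or an unbounded Cantor set when $n-k=1$). Fubini only guarantees that a generic ray meets $\Sigma$ in a set of one-dimensional measure zero, not that it misses $\Sigma$ entirely, and path-lifting through points of $\Sigma$ is not automatic. The clean repair is to bypass path-lifting and use the degree you already computed: $\pi_V:f^{-1}(c)\to V$ is a proper map of degree $\pm1$ between $(n{-}k)$-manifolds; if every component $K_i$ of $f^{-1}(c)$ were compact, then each $\pi_V|_{K_i}$ would be non-surjective onto the non-compact connected $V$ and hence have degree $0$, forcing $\deg\pi_V=0$, a contradiction. (Note also that ``fix any $p_0$'' overshoots: $f^{-1}(c)$ can certainly have compact components.)

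Second, the limiting argument for singular $c$ is too loose: arc-length-parametrized unbounded curves can converge uniformly on compacta to a \emph{bounded} curve (e.g.\ $\tilde\ell_n(t)=(\cos t,\sin t,t/n)\to(\cos t,\sin t,0)$), so ``equicontinuity--compactness'' alone does not deliver an unbounded limit. To salvage this you need a uniform escape rate---for instance, choose the non-compact component $K_n\subset f^{-1}(c_n)$ with $\deg(\pi_V|_{K_n})\neq0$, so that $\pi_V|_{K_n}$ is surjective, and select points $q_n^{(m)}\in K_n$ over $m\cdot e_1\in V$ for all $m\in\bN$; then a diagonal extraction yields, for each $m$, a limit point over $m\cdot e_1$ lying in a single component of $f^{-1}(c)$ (this last step still requires care to ensure the limit points lie in the \emph{same} component).
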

Since the level set of any non-zero $S_f$ are single hyperplanes, we also have the following:
\begin{proposition}
  For a generic linear function $S$ and periodic function $\varepsilon$ on $\bR^n$, the level sets of the pseudoperiodic function
  $f_\lambda=S+\lambda\varepsilon$ have a single non-compact component for $\lambda$ small enough.
\end{proposition}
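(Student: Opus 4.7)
The plan is to show that for $\lambda$ small, $f_\lambda$ is strictly monotone along the direction normal to the hyperplane $S^{-1}(0)$, so every level set of $f_\lambda$ is a graph over that hyperplane and is therefore a single non-compact piece diffeomorphic to $\bR^{n-1}$.

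First I would write $S(x) = \langle a, x \rangle$ with $a \in \bR^n\setminus\{0\}$ (the only genericity actually needed on $S$), and set $\hat a = a/|a|$. Because $\varepsilon$ is smooth and $\bZ^n$-periodic, the constant $M := \sup_{\bR^n}|\nabla\varepsilon|$ is finite, and for every $\lambda$ with $|\lambda| < |a|/M$ the directional derivative
\[
\partial_{\hat a} f_\lambda(x) = |a| + \lambda\,\langle \nabla\varepsilon(x),\hat a\rangle
\]
is strictly positive on all of $\bR^n$. In particular $\nabla f_\lambda$ never vanishes, so every level set of $f_\lambda$ is a smooth hypersurface.

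Next, fix such a $\lambda$ and any $c \in \bR$. Decomposing $\bR^n = a^\perp \oplus \bR\hat a$ and restricting $f_\lambda$ to a line $t \mapsto y + t\hat a$ with $y \in a^\perp$ yields the smooth function $g_y(t) = t|a| + \lambda\,\varepsilon(y + t\hat a)$. The previous step gives $g_y' > 0$, and the linear growth of the first term combined with boundedness of $\varepsilon$ forces $g_y(\pm\infty) = \pm\infty$. Hence $g_y:\bR\to\bR$ is a diffeomorphism and the equation $g_y(t) = c$ has a unique solution $t = t_c(y)$ depending smoothly on $y$ by the implicit function theorem. Therefore $f_\lambda^{-1}(c)$ coincides with the graph $\{y + t_c(y)\hat a : y \in a^\perp\}$, which is diffeomorphic to $\bR^{n-1}$, hence connected and non-compact. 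This simultaneously produces the single non-compact component and rules out extra compact components.

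There is no real obstacle here; the only subtlety worth flagging is that the threshold $|a|/M$ depends on $S$ and $\varepsilon$ but \emph{not} on the level value $c$, so the same small $\lambda$ works uniformly across all level sets. This uniformity is automatic from $\bZ^n$-periodicity of $\nabla\varepsilon$, which is why genericity of $\varepsilon$ (beyond smoothness) need not be invoked in this perturbative regime — the nonvanishing of the linear part $S$ suffices.
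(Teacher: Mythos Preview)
Your argument is correct and complete. The paper itself does not give a proof of this proposition; it merely prefaces the statement with ``Since the level set of any non-zero $S_f$ are single hyperplanes, we also have the following,'' leaving the perturbation step implicit. Your monotonicity-in-the-normal-direction argument makes that step explicit: it shows directly that each level set is a smooth graph over $a^\perp$, hence a single copy of $\bR^{n-1}$, and in particular rules out any additional compact components. This is exactly the kind of argument the paper's one-line justification is gesturing at, and your observation that the threshold on $\lambda$ is independent of the level $c$ is a nice bonus that the paper does not mention.
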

%
%
%
%
%
A first non-trivial result on level sets of pseudoperiodic functions that are not necessarily small perturbations of a linear function was found
by Arnold in~\cite{Arn91}, as a first step studying the level {\em lines} of pseudoperiodic maps $f:\bT^n\to\bR^{n-1}$.
\begin{theorem}[Arnold, 1991]
  The level set of a generic pseudoperiodic function on the plane contains precisely one unbounded component.
\end{theorem}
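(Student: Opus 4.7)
The plan is to combine the preceding proposition (existence of a non-compact component and confinement in a finite strip) with a uniqueness argument by contradiction, reduced to a statement about the foliation induced on $\bT^2$. Throughout, ``generic'' means $f=S+\varepsilon$ with $S$ linear of irrational slope, $f$ Morse, and $c$ a regular value. The preceding proposition already supplies the existence half and tells us $L_c = f^{-1}(c)$ lies inside the finite-width strip $\cS_c = \{x\in\bR^2 : |S(x)-c|\leq R\}$ parallel to the level lines of $S$.

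For uniqueness I would assume, for contradiction, that $L_c$ contains two distinct unbounded components $C_1, C_2$. Since $c$ is regular, each $C_i$ is a properly embedded line trapped in $\cS_c$. Together they separate $\bR^2$ into three open regions, exactly one of which --- call it $V$ --- has $\partial V \subset C_1 \cup C_2$; by continuity $f-c$ has constant sign on $V$ (say positive), and $f|_V$ is bounded because $V\subset\cS_c$ and $\varepsilon$ is bounded. Topologically, $V$ is an infinite strip, bounded transversally and unbounded along the strip direction.

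Next I would push the picture to $\bT^2$. The closed Morse 1-form $\omega_f = B_f + d\bar\varepsilon$ has $\irr \omega_f = 2$ for generic $S$. The hypotheses of the Farber--Katz--Levine theorem from the end of Section~\ref{sec:Nov} are satisfied on $\bT^2$ (torsion-free $H_1$, non-degenerate cup pairing, transitivity from full irrationality), giving $\cM(\bT^2,\omega_f) = \bT^2$; every non-singular leaf is dense. Consequently the projections $\ell_i = \pi(C_i)$ are both dense leaves, and they are distinct: otherwise $C_1$ and $C_2$ would be two lifts of a single leaf, related by some $v\in\bZ^2\setminus\{0\}$, which would force $S(v)=0$ and contradict the injectivity of $S$ on $\bZ^2$ (irrationality of $S$).

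The main obstacle is converting the coexistence of the open strip $V$ with the density of $\ell_1, \ell_2$ into a concrete contradiction. A naïve separation argument fails, since the countably many $\bZ^2$-translates of $C_1\cup C_2$ may accumulate densely in $V$. My plan for this final step is to exploit the Morse structure on $V$ together with the pseudo-periodicity $f(x+v) = f(x) + S(v)$. On $V$ the bounded function $f|_V$ must attain its supremum either at an interior local maximum or through an escape along the strip direction; invoking the density of $S(\bZ^2)$ in $\bR$ one translates such a near-maximum by a lattice vector with small $S(v)$ to produce arbitrarily many near-maxima of $f|_V$ with comparable $f$-values. Since $f$ has only finitely many critical orbits modulo $\bZ^2$, this forces infinitely many of these near-maxima to coincide modulo $\bZ^2$, which --- together with the strip-confinement of $V$ --- yields the desired contradiction. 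Executing this last step cleanly is the hardest part of the argument.
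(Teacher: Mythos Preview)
The survey does not supply a proof of this theorem; Arnold's result is simply quoted with a reference to \cite{Arn91}, so there is no argument in the paper against which to compare your attempt.

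Judging your proposal on its own merits, there is a genuine gap. First, a minor point: the claim that $f-c$ has constant sign on the region $V$ between $C_1$ and $C_2$ is not justified. You take $V$ to be a component of $\bR^2\setminus(C_1\cup C_2)$, but nothing prevents \emph{compact} components of $f^{-1}(c)$ from lying inside $V$, and across each of those $f-c$ changes sign. (This is not fatal by itself, since the only consequence you actually use later is that $f|_V$ is bounded, and that follows already from $V\subset\cS_c$.)

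The serious problem is the final step, which you yourself flag as ``the hardest part'' and which is not carried out. The sketch you offer does not work: translating a near-supremum point $p\in V$ by a lattice vector $v\in\bZ^2$ produces a point of $V+v$, not of $V$, and since $V$ is not $\bZ^2$-invariant this says nothing new about $f|_V$. The fact that $f$ has only finitely many critical orbits modulo $\bZ^2$ is true but yields no contradiction --- a bounded smooth function on a non-compact strip need not attain its supremum at all, and even if it did at an interior critical point, that point belonging to one of finitely many $\bZ^2$-orbits is entirely consistent with the hypotheses. You correctly establish (by the irrationality of $S$) that $\pi(C_1)$ and $\pi(C_2)$ are \emph{distinct} leaves on $\bT^2$, and you invoke Farber--Katz--Levine to see they are dense; but density of two disjoint leaves in $\bT^2$ is not in itself contradictory (two disjoint orbits of an irrational rotation are both dense), and you never convert it into one. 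Incidentally, invoking the 1998 Farber--Katz--Levine theorem for a 1991 result is anachronistic and unnecessary: for $\bT^2$ the density of non-compact leaves of a fully irrational Morse $1$-form already follows from the Maier/Imanishi results stated earlier in the survey (Theorems~\ref{thm:mc} and~\ref{thm:Ima}). In short, the existence half is fine, but the uniqueness half is missing its central idea.
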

%
%
%
%
%
The case $n>2$ was studied by I.A. Dynnikov in~\cite{Dyn94}:
\begin{theorem}[Dynnikov, 1994]
  Let $a$ be a regular value of a non-degenerate pseudoperiodic map $f:\bT^n\to\bR^{n-1}$ and let $\rho_a$ the radius of
  the smallest cylinder centered at $S_f=S_f^{-1}(a)$ containing $f^{-1}(a)$.
  Then there is an odd number of non-compacts component of $f^{-1}(a)$ and each one is a finite deformation of $S_f$.
  Moreover, if the direction $S_f$ is fully irrational, $\sup_{a\in\bR}\rho_a$ is finite and the number of
  connected components in $f^{-1}(a)$ does not depend on $a$.
 %
\end{theorem}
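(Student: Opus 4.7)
The argument rests on the preceding proposition: $f^{-1}(a)$ is contained in a cylinder $C_a$ of finite radius $\rho_a$ around the line $\ell_a=S_f^{-1}(a)$. Write $f=S_f+\varepsilon_f$ with $\varepsilon_f$ bounded and $\bZ^n$-periodic, and let $v$ be a unit vector spanning $\ker S_f$; the affine hyperplanes $H_t=\{x:v\cdot x=t\}$ are transverse to the cylinder's axis and each meets $\ell_a$ in a single point.

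\textbf{Parity of the slice count.} I would first show that $|H_t\cap f^{-1}(a)|$ is finite and odd for every regular value $t$ of $v|_{f^{-1}(a)}$. Finiteness is immediate, because $H_t\cap C_a$ is bounded and the intersection is discrete at regular $t$. For the parity, observe that $S_f|_{H_t}:H_t\to\bR^{n-1}$ is an affine isomorphism, so setting
\[
  \phi_t(x)\bydef (S_f|_{H_t})^{-1}\bigl(a-\varepsilon_f(x)\bigr),\qquad x\in H_t,
\]
gives $\mathrm{Fix}(\phi_t)=H_t\cap f^{-1}(a)$. Boundedness of $\varepsilon_f$ confines $\phi_t(H_t)$ to a closed disk $D\subset H_t$, so $\phi_t|_D$ is a self-map homotopic to a constant inside $D$; its Lefschetz number equals $1$, and for regular $t$ the signed count of fixed points is $1$, whence $|\mathrm{Fix}(\phi_t)|$ is odd. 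This parity is a global invariant in $t$, since critical values of $v|_{f^{-1}(a)}$ generically cause only birth/death of intersection pairs.

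\textbf{From slice parity to component count.} Each non-compact component $\gamma$ is a properly embedded copy of $\bR$ inside $C_a$, so the height $v\circ\gamma$ is proper on $\bR$, and hence each end of $\gamma$ satisfies $v\circ\gamma\to+\infty$ or $-\infty$. This partitions the non-compact components into three types: bi-infinite (one end at $+\infty$, the other at $-\infty$, i.e.\ a ``finite deformation of $\ell_a$''), up-U (both ends at $+\infty$) and down-U. For $t$ beyond all critical values of $v|_{f^{-1}(a)}$, each bi-infinite component contributes one intersection with $H_t$, each up-U contributes two, and the rest contribute nothing; the odd slice count thus forces the number of bi-infinite components to be odd. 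The heart of the theorem is the further claim that U-shapes do not occur. The strategy I would follow exploits the $\bZ^n$-periodicity of $\varepsilon_f$ via the identity $f^{-1}(a+S_f(k))=f^{-1}(a)+k$: any hypothetical turning point of a U-shape in $f^{-1}(a)$ produces, by lattice translation, turning points in $f^{-1}(a+S_f(k))$ for every $k\in\bZ^n$, and a bifurcation-theoretic argument for the variation with $a$, combined with the fixed-point count above, forces these turning points to cancel pairwise. This exclusion of U-shapes is the principal obstacle of the proof.

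\textbf{The fully irrational case.} Assume $\ker S_f$ is fully irrational, so $S_f(\bZ^n)$ is dense in $\bR^{n-1}$. The substitution $x\mapsto x+k$ establishes $\rho_{a+S_f(k)}=\rho_a$ and that the number of non-compact components of $f^{-1}(a+S_f(k))$ equals that of $f^{-1}(a)$ for every $k\in\bZ^n$. Both quantities are continuous (resp.\ locally constant) functions of $a$ on the open set of regular values, by the implicit function theorem and by the fact that, with U-shapes excluded, bifurcations in $a$ create or destroy only compact components. Invariance under the dense subgroup $S_f(\bZ^n)$ then forces $\rho_a$ to be bounded, giving $\sup_a\rho_a<\infty$, and the count of non-compact components to be independent of the regular value $a$.
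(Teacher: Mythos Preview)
The paper is a survey: it attributes the result to Dynnikov's 1994 article and gives no proof, so your attempt must stand on its own. Your Lefschetz step is sound --- it is the degree computation for the proper map $F(x)=(f(x),v\cdot x)$, which differs from the linear isomorphism $(S_f,v\cdot)$ by a bounded term and hence has degree $\pm1$; this yields the signed slice count $\pm1$ and, once U-shapes are excluded, the odd unsigned count of bi-infinite components.

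The gap is exactly where you say it is, and your one sentence toward closing it is not a proof. Lattice-translating a turning point to $f^{-1}(a+S_f(k))$ places it on a \emph{different} level set, and nothing in your sketch explains why this forces any cancellation; the degree identity is entirely compatible with arbitrarily many U-shaped components, each contributing zero to the signed count, and in particular does not even yield finiteness of the set of non-compact components. Excluding U-shapes requires using the compactness of $\bT^n$ in an essential way --- for instance through the one-dimensional foliation tangent to $\ker df$ on $\bT^n$, whose separatrix structure is governed by the finitely many critical points of $f$ --- rather than arguing level by level in $\bR^n$. Two smaller points: the bound $\sup_a\rho_a<\infty$ needs no irrationality, since $|S_f(x)-a|\le\|\varepsilon_f\|_\infty$ for $x\in f^{-1}(a)$; and your local-constancy claim for the non-compact component count also hides an appeal to compactness on $\bT^n$, since the implicit function theorem alone does not prevent components from appearing or disappearing at infinity as $a$ varies through regular values.
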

%
%
Unlike what happens for $n=2$, even in the generic case the number of non-compact components of the level sets of
a pseudoperiodic map $f:\bT^n\to\bR^{n-1}$ can be any positive odd number, as an example of D.A. Panov~\cite{Pan96,Pan99} shows:
\begin{theorem}[Panov, 1996]
  For every $n\geq3$ there is an open set (in the space $\bR^n\times C^\infty(\bT^n)$ of $n$-vectors and $n$-periodic functions)
  of pseudoperiodic maps $f:\bT^n\to\bR^{n-1}$ having level sets with $k=2m+1$ non-compact lines for any integer $m\geq0$.
\end{theorem}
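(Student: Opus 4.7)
The plan is to construct, for each $m\geq 0$, an explicit pseudoperiodic map $f_0:\bT^n\to\bR^{n-1}$ whose generic level set in $\bR^n$ has exactly $k=2m+1$ non-compact components, and then to invoke transversality to conclude that the same count is preserved on an open neighbourhood of $f_0$ in $\bR^n\times C^\infty(\bT^n)$. The construction starts from a rational skeleton: fix a primitive vector $v\in\bZ^n$ and a linear map $S:\bR^n\to\bR^{n-1}$ with $\ker S=\bR v$. The level sets of $S$ on $\bT^n$ form a continuous family of parallel closed geodesics in the homology class $[v]\in H_1(\bT^n,\bZ)$, each lifting in $\bR^n$ to a single straight line parallel to $v$.

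Next I would engineer a periodic perturbation $\varepsilon:\bT^n\to\bR^{n-1}$ which splits the baseline closed geodesic into $2m+1$ parallel copies. Fix a regular value $a\in\bR^{n-1}$ and pick $2m+1$ pairwise disjoint smoothly embedded loops $\gamma_1,\ldots,\gamma_{2m+1}\subset\bT^n$ obtained as small translates of $S^{-1}(a)$, so that all the $\gamma_i$ are isotopic and homologous to $[v]$. In a tubular neighbourhood $U$ of $\gamma_1\cup\cdots\cup\gamma_{2m+1}$ define $\varepsilon$ by pre-composing $S-a$, fibrewise on the normal slices of $U$, with a normal-direction diffeomorphism whose zero locus is the $2m+1$ prescribed points on each slice; then smoothly cut $\varepsilon$ off to zero outside a slightly larger tube. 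Then $f_0:=S+\varepsilon$ is pseudoperiodic, satisfies $f_0^{-1}(a)=\gamma_1\sqcup\cdots\sqcup\gamma_{2m+1}$, and is transverse to $a$ along every $\gamma_i$. Since each $\gamma_i$ represents the class $[v]$, its preimage in the universal cover $\bR^n$ is a single unbounded connected curve asymptotic in both directions to $\bR v$, so the level set of $f_0$ at $a$ in $\bR^n$ consists of exactly $2m+1$ non-compact components.

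Finally, transversality of $f_0$ to $a$ is an open condition in the $C^\infty$ topology. Any sufficiently small perturbation $(\tilde S,\tilde\varepsilon)$ of $(S,\varepsilon)$ in $\bR^n\times C^\infty(\bT^n)$ produces a preimage of $a$ in $\bT^n$ which is $C^\infty$-close to $\gamma_1\sqcup\cdots\sqcup\gamma_{2m+1}$ and $\bZ^n$-equivariantly isotopic to it on a fundamental domain of $\bR^n$; hence it still has $2m+1$ non-compact components upstairs. In particular the linear part of $S$ may be rotated so that its kernel direction becomes fully irrational, yielding the required open set of examples. The main obstacle is the surgery step: one must ensure that the zero locus of $f_0-a$ is \emph{exactly} $2m+1$ parallel loops, with no additional strands produced elsewhere by the perturbation. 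This requires bounding $|S-a|$ from below on the complement of $U$ and choosing $\|\varepsilon\|_{C^0}$ strictly smaller than that bound, so that $f_0\neq a$ off $U$. Combined with Dynnikov's theorem (which guarantees that the number of non-compact components of a regular preimage is locally constant and odd), this delivers the claim.
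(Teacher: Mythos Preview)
The paper is a survey and states this result without proof, citing Panov's original articles; there is no argument in the paper to compare yours against. So let me assess your proposal on its own merits.

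Your construction step is essentially sound in spirit, though garbled in execution. The phrase ``pre-composing $S-a$ with a normal-direction diffeomorphism whose zero locus is the $2m+1$ prescribed points'' cannot be what you mean: a diffeomorphism of an $(n-1)$-disk has one zero, not $2m+1$. What you actually need is a smooth map from the normal slice to $\bR^{n-1}$ that agrees with $S-a$ near the boundary and has exactly $2m+1$ transverse zeros whose indices sum to $+1$; then set $\varepsilon$ equal to the difference. Also, your stated ``main obstacle'' is misplaced: on the complement of $U$ you have $\varepsilon=0$, so $f_0=S\neq a$ there automatically. The genuine danger lies in the $\bZ^n$-\emph{translates} of the tube, where $\varepsilon$ is again nonzero by periodicity; to rule out spurious solutions there you must keep both the tube radius and $\|\varepsilon\|_{C^0}$ below the shortest nonzero vector of the lattice $S(\bZ^n)$.

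The real gap, however, is in your openness argument. You assert that a small perturbation $(\tilde S,\tilde\varepsilon)$ yields a preimage ``$\bZ^n$-equivariantly isotopic'' to the $2m+1$ loops on a fundamental domain, and conclude there are still $2m+1$ non-compact components upstairs. But once $\ker\tilde S$ is irrational the level set $\tilde f^{-1}(a)\subset\bR^n$ carries \emph{no} $\bZ^n$-symmetry whatsoever: for $\gamma\in\bZ^n$ one has $\tilde f^{-1}(a)+\gamma=\tilde f^{-1}(a-\tilde S(\gamma))$, and $\tilde S(\gamma)\neq0$ for every nonzero $\gamma$. Knowing that $\tilde f^{-1}(a)$ restricts to $2m+1$ arcs on a single unit cube therefore says nothing about how those arcs hook up across the infinitely many neighbouring cubes; two strands of opposite transverse index could, in principle, merge into one component far from the origin. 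A degree argument pins down only the \emph{signed} count (which is $1$), not the unsigned count $2m+1$. Your invocation of Dynnikov's theorem does not help either: that theorem says the count is independent of $a$ for a fixed fully irrational $f$, not that it is locally constant in $f$. Establishing stability under perturbation of the linear part is the substantive content of Panov's result and requires a genuinely global argument that your sketch does not supply.
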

\begin{figure}
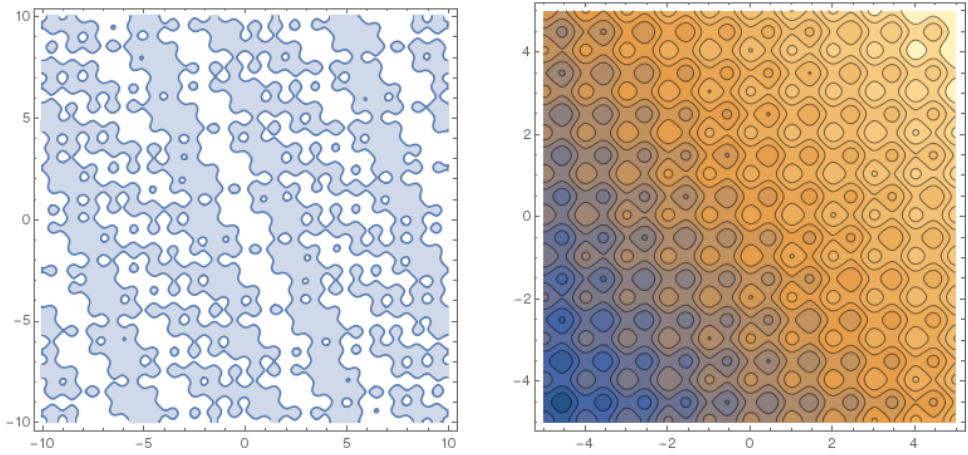

  \centering
  \includegraphics[width=6cm]{qp5}\hskip.75cm\includegraphics[width=6cm]{planarPPfunc}
  \caption{%
    \small
    (left) Sections of the periodic function $f(x)=\sum_{i=1}^5\cos(x^i)$ with a 2-plane spanned by $v=(1,0,.2,.3,.15)$ and $w=(0,1,.05,.1,.4)$. 
    (right) Level sets of the pseudoperiodic function $F(x,y)=y+\sqrt{2}\,x+\cos(2\pi x)+\cos(2\pi y)$ 
  }
  \label{fig:pp}
\end{figure}
\noindent
{\bf Open questions:} what can be said about the number and structure of the level sets of a pseudoperiodic map $\bT^n\to\bR^k$?
%
\section{Quasiperiodic functions on the plane}
\label{sec:qpf}
%
It turns out that non-generic pseudoperiodic maps can be topologically very rich too. The case
when $f=(\varepsilon,S_1,\dots,S_{n-2}):\bT^n\to\bR^{n-1}$, where $\varepsilon$ is singlevalued and the $S_i$ are purely linear
pseudoperiodic functions, was introduced by Novikov in 1982~\cite{Nov82} and leads to the field of
quasiperiodic functions. Indeed, the image in $\bR^n$ of the line sets of $f$ coincide with the line sets
of the restriction of $\varepsilon$ to affine 2-planes defined by the equations $\{S_1=a_1,\dots,S_{n-2}=a_{n-2}\}$,
namely (see the definition below) the line sets of a quasiperiodic function on the plane with $n$ quasiperiods.

{\em Quasiperiodic functions} started appearing naturally in the fields of analysis, geometry and dynamical systems since the end
of the XIX century (see~\cite{DN05} for several related examples and references). They were first introduced in literature by the
Latvian mathematician P. Bohl~\cite{Bohl93},
in the context of the theory of differential equations, and by the French astronomer E. Esclangon~\cite{Esc04},
who introduced the terminology, although they become widely known to the mathematical community only in Twenties through the works of
H. Bohr~\cite{Bohr26} and A. Besikovich~\cite{Bes26}, who further extended this class of functions to {\em almost periodic} functions:
\begin{definition}
  The set $AP(\bR^n)$ of {\em almost periodic} functions over $\bR^n$ is the closure in $C_b(\bR^n,\bC)$, the set of bounded
  continuous complex functions over $\bR^n$, of the set of trigonometric polynomials with respect to the supremum norm.
  For every $f\in AP(\bR^n)$, we define its {\em frequency module} $M(f)$ as the  $\bZ$-module generated by all ``frequencies vectors''
  $\nu\in\bR^n$ such that
  $$
  \lim_{T\to\infty}\frac{1}{(2T)^k}\int_{[-T,T]^n}e^{-i\langle\nu,x\rangle}\overline{f(x)}dx\neq0.
  $$
  We say that $f$ is {\em quasiperiodic} if $M(f)$ is finitely generated and we call the generators {\em quasiperiods} of $f$.
  Equivalently, we say that $f:\bR^n\to\bC$ is quasiperiodic with $m>n$ quasiperiods if
  $$
  f=\varphi\circ\pi_m\circ\ell
  $$
  for some $\varphi\in C^0(\bT^m,\bC)$ and some affine embedding $\ell:\bR^n\to\bR^m$,
  namely if $f$ is the restriction to a $n$-plane of a periodic function on $\bR^m$.
\end{definition}
\begin{example}
  Consider the function $f(x)=\cos(2\pi x)+\cos(\sqrt{2}\,2\pi x)$. Clearly $f$ is a non-periodic almost periodic function.
  Its frequency module is generated over $\bZ$ by 1 and $1/\sqrt{2}$, so $f$ is quasiperiodic with two quasiperiods.
  For example, $f$ can be seen as the restriction of $\varphi(x,y)=\cos(2\pi x)+\cos(2\pi y)$ to the affine line $y=\sqrt{2}x$. 
\end{example}
Note that, if $f'=\varphi\circ\pi_m\circ\ell'$ with $\ell'$ parallel to $\ell$, then, for some $c\in\bR^n$, we have that
$f'(x)=f(x+c)$ for all $x\in\bR^n$. Although in general we cannot find a $c$ such that $f(x+c)=f(x)$ for all $x$, we can
get as close as we please to this condition: 
%
\begin{theorem}[Bohr, 1926]
  A function $f\in C_b(\bR^n,\bC)$ is almost periodic iff there exists a relatively dense set of $\varepsilon$-{\em almost periods}
  for every $\varepsilon>0$, namely for every $\varepsilon>0$ there is a relatively dense set $T_\varepsilon\subset\bR^n$ such that
  $$
  |f(x+a)-f(x)|<\varepsilon\;\hbox{ for all } a\in T_\varepsilon.
  $$
\end{theorem}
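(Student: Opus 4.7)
The statement is an equivalence and I would prove the two implications separately. The forward implication follows by approximation combined with Kronecker's simultaneous Diophantine approximation theorem; the backward implication is the substantial one, requiring a compactness-of-translates argument together with a Fourier--Bohr spectral analysis.

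For the forward direction, I would start with a pure character $x\mapsto e^{i\langle\nu,x\rangle}$: a vector $a\in\bR^n$ is an $\varepsilon$-almost period as soon as $\langle\nu,a\rangle$ lies sufficiently close to $2\pi\bZ$, and the set of such $a$ (a union of equi-spaced parallel slabs) is obviously relatively dense. For a trigonometric polynomial $P(x)=\sum_{k=1}^N c_k e^{i\langle\nu_k,x\rangle}$, one needs $a$ to be an almost-period for every character in $P$ simultaneously, and this is precisely Kronecker's theorem on simultaneous Diophantine approximation of $\langle\nu_1,a\rangle,\dots,\langle\nu_N,a\rangle$ modulo $2\pi$. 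A standard $\varepsilon/3$ triangle-inequality argument then transfers the property from a trigonometric polynomial $P$ to any $f\in AP(\bR^n)$ with $\|f-P\|_\infty<\varepsilon/3$.

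For the backward direction, assume $f\in C_b(\bR^n,\bC)$ admits a relatively dense set $T_\varepsilon$ of $\varepsilon$-almost periods for every $\varepsilon>0$. The plan proceeds in three steps. First, I would prove that $f$ is uniformly continuous: choosing $L$ so that every ball of radius $L$ in $\bR^n$ meets $T_\varepsilon$, I get uniform continuity on the compact box $[-L-1,L+1]^n$ for free, and then any point of $\bR^n$ can be pushed into this box by an almost-period at the cost of an additive $\varepsilon$ error, yielding global uniform continuity. Second, I would verify that the family of translates $\{f(\cdot+a):a\in\bR^n\}$ is totally bounded in $C_b(\bR^n,\bC)$: every translate lies within uniform distance $\varepsilon$ of a translate indexed by $a\in[-L,L]^n$, and the latter family is precompact by Arzel\`a--Ascoli thanks to the uniform continuity just proved.

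The third step is where the main obstacle lies: extracting genuine spectral information from what is, a priori, only metric data on the translates of $f$. The plan is to define the Bohr mean $M(g)=\lim_{T\to\infty}(2T)^{-n}\int_{[-T,T]^n}g(x)\,dx$ on the algebra generated by $f$ and its translates, verify that the limit exists (uniformly in shifts) by an averaging argument exploiting the almost-periodicity, and then introduce the Fourier--Bohr coefficients $a(\nu)=M(e^{-i\langle\nu,\cdot\rangle}f)$. A Parseval-type identity ensures that only countably many $\nu$ carry non-zero mass, and a Fej\'er-type summability kernel adapted to this setting produces trigonometric polynomials converging uniformly to $f$; equivalently, one may pass to the Bohr compactification of $\bR^n$ and invoke Peter--Weyl. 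Since this last step is a well-trodden but non-trivial piece of harmonic analysis, I would quote it from a standard reference rather than reproduce the full spectral calculation in a survey.
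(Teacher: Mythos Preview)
The paper does not supply a proof of this theorem: it is a survey, and Bohr's result is merely stated with a citation to the original 1926 paper. Your outline is a correct and standard route to the result (essentially the classical Bohr--Bochner theory: Kronecker for the forward direction, uniform continuity and precompactness of translates followed by Bohr--Fourier/Fej\'er summability for the converse), and is exactly the kind of argument one would cite rather than reproduce in full in a survey of this type.
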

%
The main general results on the topological structure of level sets of quasiperiodic functions are due to S.M. Gusein-Zade~\cite{GZ89,GZ99}
and have to do with the topological invariants for the level sets $V_c=f^{-1}(c)$ and lower sets $M_c=f^{-1}((-\infty,c])$
of a generic quasiperiodic function.

As mentioned in the introduction, for both compact and periodic manifolds
are defined fundamental topological numbers such the Betti numbers and the Euler characteristic. Moreover,
in case of the level and lower sets of a Morse function on a compact manifold, these numbers would be piecewise
constant functions of the parameter $c$. On the contrary, such numbers are all infinite for a generic non-compact
manifold or, correspondingly, for a the level and lower levels of a generic function on a non-compact manifold.
In case of quasiperiodic manifolds, though, it is still possible to give a meaning to such quantities by considering
the corresponding {\em averaged} quantities in the following way, that we write in case of the Euler characteristic:
\begin{definition}
  Let $R>0$, $B_R\subset\bR^n$ the ball of radius $R$ centered at the origin and $f:\bR^n\to\bR$ a Morse
  function. Then the density of critical points of $f$ of order $k$ is
  $$
  N^k_f(c) = \lim_{r\to\infty}\frac{N^k_f(B_R\cap V_c)}{Vol(B_R)}
  $$
  and similarly are defined the density of Euler characteristic $\chi_f(c)$ and of Betti numbers $b^k_f(c)$
  (resp. $\tilde\chi_f(c)$ and $\tilde b^k_f(c)$ ) of $M_c$ (resp. $V_c)$. 
\end{definition}
%
%
\begin{theorem}[Gusein-Zade, 1999]
  For any analytic (resp. generic smooth) quasiperiodic function $f$
  which is the restriction to a $n$-plane of a $m$-periodic function, the density $N^k_f(c)$ is a well
  defined piecewise analytic (resp. piecewise smooth) function of $c$. Moreover, in the neighborhood of
  every singular point $c_0$, $f$ has an asymptotic expansion of the form
  $$
  \sum_\alpha\sum_{q=0}^{m-n-1} A_{i,\alpha}|c-c_0|^\alpha \ln^q|c-c_0|\,,
  $$
  where $\alpha$ ranges on some finite segment of an arithmetic progression of rational numbers
  with difference equal to 1.
\end{theorem}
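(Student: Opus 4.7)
The plan is to reduce $N^k_f(c)$ to an integral over a level set of the periodic function $\varphi$ on $\bT^m$ by equidistribution, and then to apply the classical theory of asymptotics of fiber integrals at critical values of analytic functions. First I would unwind the definitions: a critical point of $f=\varphi\circ\pi_m\circ\ell$ in $\bR^n$ projects under $\pi_m\circ\ell$ to a point $y\in\bT^m$ where the gradient $\nabla\varphi(y)$ is orthogonal to the linear $n$-plane $V:=\ell_*(\bR^n)\subset\bR^m$, and its Morse index equals the index of $\mathrm{Hess}\,\varphi(y)|_V$. Let
$$
C^k := \{\,y\in\bT^m\ :\ \nabla\varphi(y)\in V^\perp,\ \mathrm{ind}(\mathrm{Hess}\,\varphi(y)|_V)=k\,\},
$$
a smooth submanifold of $\bT^m$ of dimension $m-n$ for generic $\varphi$. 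Since $\ell$ is fully irrational, Weyl equidistribution applied to the $V$-parallel foliation of $\bT^m$, together with a transversality argument, gives
$$
N^k_f(c) \;=\; \int_{C^k\cap\varphi^{-1}(c)} \rho(y)\,d\mathcal{H}^{m-n-1}(y)
$$
for an explicit smooth density $\rho$ built from $|\nabla\varphi|$ and the Jacobian of the projection of $C^k$ onto $V^\perp$. By the coarea formula applied to $\varphi|_{C^k}$ this integral equals $\frac{d}{dc}\int_{C^k\cap\{\varphi\le c\}}\rho\,d\mathcal{H}^{m-n}$, whose primitive is real analytic (resp.\ $C^\infty$) on any interval of $c$ free of critical values of $\varphi|_{C^k}$. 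Hence the singular set of $N^k_f$ is exactly the discrete set of such critical values $c_0$, and $N^k_f$ is piecewise analytic (resp.\ smooth) as claimed.

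Next I would analyze the asymptotics near a singular $c_0$ by localizing $\varphi|_{C^k}$ at each of its finitely many critical points in the compact fiber $\varphi^{-1}(c_0)\cap C^k$ and invoking the Varchenko--Malgrange theorem on asymptotic expansions of fiber integrals of real-analytic functions (see Arnold--Gusein-Zade--Varchenko, \emph{Singularities of Differentiable Maps}, Vol.~II): resolving the local singularity yields an expansion of the form $\sum_\alpha\sum_{q} A_{i,\alpha}|c-c_0|^\alpha\ln^q|c-c_0|$, with the exponents $\alpha$ belonging to a finite union of arithmetic progressions of rational numbers with common difference $1$, namely the positive-integer shifts of roots of the Bernstein--Sato $b$-function of $\varphi|_{C^k}$ at the critical point. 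The bound $q\le m-n-1$ on the logarithmic power is the Monodromy Theorem: the Jordan block size of the local monodromy on the vanishing cohomology is at most the real dimension $m-n-1$ of the vanishing fiber. For generic $C^\infty$ data the analytic case suffices by jet-determinacy at the (Morse) generic singularity type.

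The main obstacle I anticipate is precisely this last step on the logarithmic powers and on the arithmetic-progression structure of the $\alpha$'s: both are substantive inputs from singularity theory rather than direct consequences of equidistribution and coarea, and one must verify that the explicit weight $\rho$ (which itself may vanish or blow up near degenerate points of $C^k$) is resolved by the same desingularization tree as $\varphi|_{C^k}$ and therefore does not enlarge the exponent lattice nor push the logarithmic power beyond $m-n-1$. Stratifying $C^k$ carefully at degenerate critical points of $\varphi|_{C^k}$, and checking the genericity hypothesis that makes the Newton polyhedra contribute only progressions of step exactly $1$, is the most delicate technical point.
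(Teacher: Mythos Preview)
The paper is a survey and does not supply a proof of this theorem; it merely states the result with a citation to Gusein-Zade~\cite{GZ89,GZ99}. Hence there is no ``paper's own proof'' to compare against.

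That said, your outline is essentially the argument Gusein-Zade uses in the original sources. The two-step reduction---first expressing $N^k_f(c)$ as a fiber integral over $C^k\cap\varphi^{-1}(c)\subset\bT^m$ via Weyl equidistribution of the irrational $n$-plane, then invoking the Varchenko--Malgrange asymptotic expansion for oscillatory/fiber integrals from \emph{Singularities of Differentiable Maps}~II---is exactly the strategy. Your identification of the logarithmic bound $q\le m-n-1$ with the Jordan-block size of the local monodromy on the $(m-n-1)$-dimensional vanishing fiber is correct, as is the source of the arithmetic progression of exponents (roots of the $b$-function shifted by nonnegative integers). The one place where your write-up is slightly loose is the equidistribution step: the limit defining $N^k_f(c)$ is a count of discrete intersection points of the affine $n$-plane with the $(m-n)$-dimensional manifold $C^k\cap\{\varphi=c\}$ lifted to $\bR^m$, so you are really invoking an equidistribution/transversality statement for intersections of an irrational affine subspace with a compact submanifold of complementary dimension, and the resulting density is the natural measure on $C^k\cap\{\varphi=c\}$ divided by the covolume factor---this is standard but deserves one more sentence than you give it. Your closing remarks about the weight $\rho$ and the genericity needed for step-$1$ progressions are well placed; these are indeed the points where the analytic hypothesis (resp.\ the ``generic smooth'' hypothesis) does real work.
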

\begin{theorem}[Gusein-Zade, 1999]
  For all smooth  (resp. almost all smooth) quasiperiodic functions $f$,
  $$
  \chi_f(c)=\sum_{k\geq0}(-1)^kN^k_f(c).
  $$
  Moreover, $\tilde\chi_f(c)=0$ for $n$ even and $\tilde\chi_f(c)=2\chi_f(c)$ for $n$ odd.
  Finally, $\chi_f(c)=\sum_{k\geq0}(-1)^kb^k_f(c)$ when the $b^k_f(c)$ are all well defined.
\end{theorem}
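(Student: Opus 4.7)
The plan is to reduce all three assertions to the classical compact Morse and Euler--Poincar\'e identities applied to truncations $M_c\cap B_R$ and $V_c\cap B_R$, and then to average by dividing by $\mathrm{Vol}(B_R)$ and letting $R\to\infty$. The key point is that the bulk contributions are $O(R^n)$ while every boundary correction arising from cutting with $\partial B_R$ is $O(R^{n-1})$, so boundary terms disappear in the density limit. Existence of the densities themselves, together with their piecewise smoothness, is guaranteed by the preceding theorem of Gusein-Zade, so the only work is to identify the limits.

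For the Morse-index formula, fix a regular value $c$ and a radius $R$ such that $\partial B_R$ is transverse both to $V_c$ and to the critical locus of $f$ (after an arbitrarily small perturbation of the affine embedding $\ell$, this holds for almost every $R$). Then $M_c\cap B_R$ is a compact $n$-manifold with corners, and the standard handle decomposition of $f$ restricted to it gives
\[
\chi(M_c\cap B_R)=\sum_{k\geq0}(-1)^k\,\#\{p\in B_R:df(p)=0,\ f(p)\leq c,\ \mathrm{ind}_p f=k\}+E(R,c),
\]
where $E(R,c)$ collects the critical points of $f|_{\partial B_R}$ and of the corner stratum. Because $f$ is the restriction to a plane of a periodic function on $\bR^m$, its derivatives are uniformly bounded, so $E(R,c)$ is bounded by the $(n-1)$-area of $\partial B_R$, i.e.\ $E(R,c)=O(R^{n-1})$. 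Dividing by $\mathrm{Vol}(B_R)$ and letting $R\to\infty$ yields $\chi_f(c)=\sum_k(-1)^k N^k_f(c)$. The hypothesis ``almost all smooth'' is exactly the genericity needed for the critical points of $f$ and $f|_{\partial B_R}$ to be Morse.

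For the parity assertion, note that $V_c\cap B_R=\partial(M_c\cap B_R)\setminus(M_c\cap\partial B_R)$. Doubling $M_c\cap B_R$ along its boundary produces a closed $n$-manifold $DM$ with $\chi(DM)=2\chi(M_c\cap B_R)-\chi(\partial(M_c\cap B_R))$. When $n$ is odd, $\chi(DM)=0$, and since the boundary decomposes as $V_c\cap B_R$ glued to $M_c\cap\partial B_R$ along a codimension-one submanifold of area $O(R^{n-1})$, one obtains
\[
\chi(V_c\cap B_R)=2\chi(M_c\cap B_R)+O(R^{n-1}),
\]
whence $\tilde\chi_f(c)=2\chi_f(c)$. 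When $n$ is even, $V_c\cap B_R$ is an odd-dimensional manifold with boundary; its double is closed of odd dimension, so $\chi(V_c\cap B_R)$ is itself $O(R^{n-1})$ (only boundary contributions survive), and the density $\tilde\chi_f(c)$ vanishes. The Betti identity $\chi_f(c)=\sum_k(-1)^k b^k_f(c)$ then follows immediately by dividing the compact Euler--Poincar\'e formula $\chi(M_c\cap B_R)=\sum_k(-1)^k b^k(M_c\cap B_R)$ by $\mathrm{Vol}(B_R)$, whenever each individual Betti density is known to exist.

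The main obstacle is making the estimate $E(R,c)=O(R^{n-1})$ truly uniform in $c$ on each open piecewise-smooth stratum from the previous theorem: one must rule out the possibility that the boundary critical points of $f|_{\partial B_R}$ accumulate in a way commensurable with the volume of $B_R$. This is where quasiperiodicity is essential, since the restriction to $\ell(\bR^n)\cap\partial B_R$ of a periodic function on $\bR^m$ has complexity per unit $(n-1)$-area bounded independently of $R$; the rigorous version of this bound follows the same averaging procedure along the affine plane $\ell(\bR^n)\subset\bR^m$ used in the proof of Gusein-Zade's density theorem above.
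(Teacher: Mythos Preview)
The paper is a survey and does not include any proof of this theorem; it is simply stated and attributed to Gusein-Zade~\cite{GZ99}. There is therefore nothing in the paper to compare your argument against.

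That said, your outline is the natural one and is essentially the strategy in Gusein-Zade's original work: apply the compact Morse and Euler--Poincar\'e identities to $M_c\cap B_R$ and $V_c\cap B_R$, then show that all boundary corrections are $O(R^{n-1})$ so they vanish after dividing by $\mathrm{Vol}(B_R)\sim R^n$. Two points deserve more care than you give them. First, $M_c\cap B_R$ is a manifold with corners, not merely with boundary, so the doubling construction and the handle count both require a stratified version of Morse theory; the corner stratum $V_c\cap\partial B_R$ must be handled explicitly rather than swept into the error term by fiat. Second, and more seriously, the bound $E(R,c)=O(R^{n-1})$ does \emph{not} follow just from uniform boundedness of the derivatives of $f$: a function with bounded derivatives can still have arbitrarily many critical points on a sphere of fixed radius. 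What makes the bound work is precisely the quasiperiodic structure, namely that $f|_{\partial B_R}$ is itself the restriction of a periodic function on $\bR^m$ to an $(n-1)$-sphere sitting inside an affine $n$-plane, and one must invoke the same averaging-over-the-torus argument that underlies the previous theorem to control its critical-point count per unit $(n-1)$-area. You flag this in your last paragraph, but it is the heart of the matter, not a peripheral obstacle.
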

It is still an open question whether or not the density of Betti numbers is well defined under the same conditions
holding for the Euler characteristic:
\begin{conjecture}[Gusein-Zade, 1999]
  For all analytical (resp. almost all smooth) quasiperiodic functions, the Betti numbers densities are well defined.
\end{conjecture}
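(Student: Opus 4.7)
The plan is to combine a subadditive ergodic theorem with Mayer–Vietoris estimates on a cubical decomposition of $\bR^n$. Lifting $f=\varphi\circ\pi_m\circ\ell$ to the ambient torus, I would obtain a family $f_x$ of quasiperiodic functions parametrized by the basepoint $x\in\bT^m$ of the affine embedding $\ell$, and observe that the induced $\bR^n$-action on $\bT^m$ by translations is ergodic in the fully irrational case (to which I would reduce by a standard approximation). For each cube $Q\subset\bR^n$ I would consider the cocycle
$$
\Phi_k(x,Q)=b^k\bigl(f_x^{-1}((-\infty,c])\cap Q\bigr)
$$
and aim to verify the hypotheses of Kingman's multi-parameter subadditive ergodic theorem, which would then produce almost-sure convergence of $\Phi_k(x,Q_R)/\mathrm{Vol}(Q_R)$ to a constant $b^k_f(c)$ as $R\to\infty$, for almost every $x$ — which is exactly the "almost all" in the statement.

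The approximate subadditivity would be the technical heart. Iterating the Mayer–Vietoris inequality
$$
b^k(A\cup B)\leq b^k(A)+b^k(B)+b^{k-1}(A\cap B)
$$
over a decomposition of $Q_R$ into small cubes $Q_L^{(i)}$ of side $L$, I would obtain
$$
\Phi_k(x,Q_R)\leq\sum_i\Phi_k(x,Q_L^{(i)})+E_{R,L},
$$
where $E_{R,L}$ is a sum of Betti numbers of $M_c$ intersected with $(n-1)$-dimensional interior faces. An induction on $n$ (with the $n=1$ case elementary) should show $E_{R,L}$ has total size $O(R^{n-1}/L)$, negligible compared to $R^n$. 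Kingman's theorem then furnishes the $\limsup$ density; for the matching $\liminf$ I would combine this bound with the alternating-sum identity $\chi_f(c)=\sum_k(-1)^kb^k_f(c)$ from the preceding theorem: once the odd and even $\limsup$s are controlled and their signed combination equals the already-convergent $\chi_f(c)$, a standard squeeze argument forces each $b^k_f(c)$ to converge.

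The main obstacle is that the Mayer–Vietoris defect $E_{R,L}$ is not obviously of lower order. A long connected component of $M_c$ threading many small cubes contributes once to $b^0$ of the whole but is counted separately in every cube it enters, producing a potential $O(R^n)$ discrepancy in the reverse direction (and similarly for $b^k$ via long $k$-handles). Controlling these genuinely global contributions is where the special-quasiperiodic structure from the Zorich–Le Tu–Alanya theorem should become essential: each such long handle ought to be confined to a bounded-radius cylinder about an affine translate of $S_f$, so its contribution itself becomes an ergodic average on $\bT^m$. Making this rigorous — effectively, promoting the quasiperiodicity of the leaves to a quasiperiodicity of the handles inside sublevel sets — is, I expect, exactly the technical step that has so far prevented the conjecture from being settled.
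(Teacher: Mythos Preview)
The statement you are attempting to prove is presented in the paper as an \emph{open conjecture}, not a theorem: the paper gives no proof, outline, or sketch for it. Immediately after stating the conjecture, the paper records only a partial result of Esterov (2000), who established that for almost all smooth quasiperiodic functions the Betti number densities $b^k_f(c)$ of the \emph{sublevel} sets $M_c$ are well defined and depend continuously on $c$; the analytic case and the level-set densities $\tilde b^k_f(c)$ remain open as far as the survey is concerned. So there is no ``paper's own proof'' to compare your proposal against.

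Your sketch is a reasonable heuristic road-map --- ergodic averaging over $\bT^m$ combined with Mayer--Vietoris subadditivity is indeed the natural first move --- and you correctly locate the genuine obstruction in your final paragraph: the defect term $E_{R,L}$ is \emph{not} of lower order in general, because long components (or long $k$-handles) of $M_c$ can traverse $O(R^n/L^n)$ cubes, so the naive subadditivity fails by a volume-order term in both directions. Your proposed rescue via the special-quasiperiodic structure of leaves is suggestive but not a proof: the Zorich--Le~Tu--Alanya theorem controls the combinatorics of leaves of a single closed $1$-form, not of the sublevel sets of a function, and the passage from ``leaves are built from finitely many pieces'' to ``handles of $M_c$ are confined to bounded cylinders'' is exactly the missing step. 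In short, your proposal is an honest outline of why the conjecture is plausible and where it is hard, but it does not close the gap --- consistent with the fact that the paper records it as unresolved.
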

This conjecture was proved in 2000 by A.I. Esterov~\cite{Est00} in the smooth case for the Betti numbers densities of $M_c$:
\begin{theorem}[Esterov, 2000]
  For almost all smooth quasiperiodic functions, the Betti numbers densities $b^k_f(c)$ are well defined and
  depend continuously on $c$.
\end{theorem}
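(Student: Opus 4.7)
The plan is to combine Morse-theoretic control of how $b^k$ changes when one enlarges the domain or varies $c$, with a subadditive ergodic theorem applied to the $\mathbb{R}^n$-action on $\mathbb{T}^m$ coming from the embedding $\ell$.

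Set $f_t(x)=\varphi(\ell(x)+t)$ for $t\in\bT^m$, so each $f_t$ is a quasiperiodic function of the same type as $f$. The ``almost all'' hypothesis is interpreted as: $\varphi$ is a generic smooth function on $\bT^m$ (in particular Morse) and the linear part of $\ell$ is totally irrational, so that the $\bR^n$-action on $\bT^m$ by translation along $\ell$ is uniquely ergodic. Under these assumptions, for a.e.\ $t$ the function $f_t$ is Morse and, by the pointwise ergodic theorem, Gusein-Zade's critical-point densities $N^k_{f_t}(c)$ exist and are independent of $t$. This provides the ``supply'' of critical values that Morse theory will balance against Betti numbers.

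The key step is to show that $R\mapsto b^k(M_c\cap B_R)$ grows like $R^n$ times a constant. I would set $\beta^k_R(t)=b^k(M_c(f_t)\cap B_R)$ and compare it with $\beta^k_R$ evaluated on a translate of $t$. If $B_R$ is tiled by disjoint translates $B_{R_0}^{(i)}$ up to a boundary layer of volume $O(R^{n-1})$, then repeated Mayer-Vietoris gives
\[
\Bigl|\,b^k(M_c\cap B_R)-\sum_i b^k\!\bigl(M_c\cap B_{R_0}^{(i)}\bigr)\,\Bigr|\le C\cdot\#\{\text{overlap faces}\}\cdot\sup_j b^{k-1}(\text{face}_j),
\]
and a standard Morse/transversality estimate shows the right-hand side is $O(R^{n-1})$. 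Thus $\beta^k_R$ is subadditive up to boundary terms, and Kingman's subadditive ergodic theorem applied to the $\bR^n$-action produces the limit $b^k_f(c)=\lim_{R\to\infty}\beta^k_R(t)/\mathrm{Vol}(B_R)$ for a.e.\ $t$, with a value independent of $t$ by ergodicity. Specializing to $t$ corresponding to the original $f$ gives the claimed density (adjusting $t$ on a null set if necessary, which is allowed since we are asserting the statement only for almost all smooth $f$).

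For the continuity in $c$, I would bound $|b^k_f(c')-b^k_f(c)|$ using the Morse-inequality estimate
\[
b^k(M_{c'}\cap B_R)\le b^k(M_c\cap B_R)+\sum_{j}\#\{p:\text{crit. pt. of index }j\text{ with }c<f(p)\le c'\},
\]
because attaching an $\ell$-handle changes Betti numbers by at most $1$ in degrees $\ell$ and $\ell-1$. Dividing by $\mathrm{Vol}(B_R)$, letting $R\to\infty$, and using the already-established existence and continuity of $N^j_f(c)$ (Gusein-Zade's theorem, which holds piecewise analytically/smoothly and in particular has no atoms for generic $f$), we get $|b^k_f(c')-b^k_f(c)|\le \sum_j|N^j_f([c,c'])|\to 0$ as $c'\to c$.

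The hardest step is the control of the Mayer-Vietoris defects, i.e.\ proving that the error in subadditivity is genuinely $o(R^n)$ rather than a constant fraction of $R^n$. This requires showing that the Betti numbers of the intersections between tiles (which live on the $(n-1)$-dimensional boundary layer) grow at most like $R^{n-1}$; this in turn needs a uniform (in $t$) bound on the Morse-theoretic complexity of $M_c(f_t)$ restricted to a bounded slab, which is where the genericity of $\varphi$ (compact transversal behaviour on $\bT^m$) is essential. Once that estimate is in hand, the ergodic-theoretic part is routine and yields both the existence of the density and, via the Morse bound, its continuity in $c$.
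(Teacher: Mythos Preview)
The paper is a survey: it states Esterov's theorem with a citation to \cite{Est00} and gives no proof of its own. There is therefore nothing in the paper to compare your argument against.

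On the substance of your sketch, the overall architecture---ergodic averaging along the $\bR^n$-action on $\bT^m$, Mayer--Vietoris to control additivity defects, and Morse handle-counting for continuity in $c$---is the natural one. Two points need tightening before it becomes a proof. First, the estimate you write for the Mayer--Vietoris defect is not $O(R^{n-1})$ as claimed: if $B_R$ is tiled by cubes of fixed side $R_0$, the number of interfaces is of order $(R/R_0)^n$ and each interface, being an $(n-1)$-dimensional slab of side $R_0$ intersected with $M_c$, contributes Betti numbers of order $R_0^{\,n-1}$; the total defect is therefore of order $R^n/R_0$, not $R^{n-1}$. This is still $o(R^n)$ only after a second limit $R_0\to\infty$, so the argument is a two-scale ``almost additive'' one rather than a direct subadditivity bound. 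Second, and relatedly, what you call ``Kingman's subadditive ergodic theorem'' does not apply verbatim: Kingman's theorem is one-parameter and requires genuine subadditivity, whereas here you have an $\bR^n$-action and a cocycle that is only approximately additive. The right tools are the multiparameter subadditive ergodic theorem of Akcoglu--Krengel (or an almost-additive variant), together with the two-scale limit just described. Your continuity argument via handle attachments and the already established densities $N^k_f(c)$ is fine.
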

%

%
\subsection{Quasiperiodic functions in $\bR^2$ with 3 quasiperiods}
The case $n=3$ is the one that Novikov extracted from solid state physics literature (see~\cite{Nov82}) and that attracted
his interest in the general problem of levels of quasiperiodic functions. 
In this section we present in detail the physics of the model from which the problem originated, in which played a relevant
role the Kharkov-Moscow school of solid state physics (among which I. Lifshitz, M. Azbel and M. Kaganov), the analytical
results, found mainly by Novikov and his topology school in Moscow, and the numerical results by the present author.
\subsubsection{Physics}
\label{subsec:mr}
The solid state physics basic model for a normal metal is a lattice of ions $\Gamma\subset\bR^3$ with rank 3 surrounded
by a gas of free electrons.
In order to get manageable equations for this model, electrons are considered independent from each other, so that it's enough
to study the problem for a single one. The quantum Hamiltonian operator $H=-\Delta+V$ on $L^2(\bR^3)$, where $\Delta$
is the Laplacian and $V$ the potential energy, corresponding to the ions lattice commutes with the action of $\Gamma$ on $\bR^3$,
so that all of its eigenstates $\psi_n$ satisfy a relation
$$
\psi_n(x+\gamma)=\exp(2\pi i \langle p,\gamma\rangle)\psi_n(x)
$$
for some covector $p\in(\bR^3)^*$ clearly defined modulo $\Gamma^*$, the dual lattice of $\Gamma$ (Bloch theorem, e.g. see Ch. 8 in~\cite{AM76}).
The corresponding eigenvalues $\varepsilon_n(p)$ of $H$ (energy bands) therefore are functions over $(\bR^3)^*/\Gamma^*\simeq\bT^3$.
The electrons responsible for the conductivity of the metal correspond to a particular $n=n_0$ and a particular energy $E_0$.
The Riemann surface $\{\varepsilon(p)=E_0\}\emb\bT^3$, where we set $\varepsilon=\varepsilon_{n_0}$, is called
{\em Fermi Surface} (FS) of the metal and the labels $p$ are called {\em quasimomenta} of the electrons. 
\begin{figure}
  \centering
  \includegraphics[width=9cm]{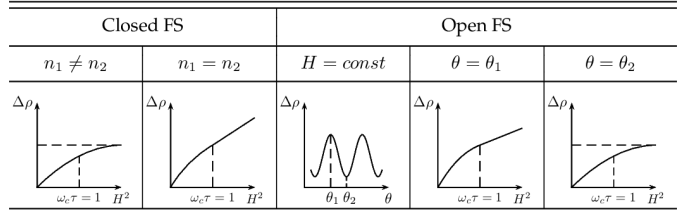}
  \caption{%
    \footnotesize
    Behavior of $\bm\rho=\bm\sigma^{-1}$ in metals with closed and open FS\cite{LK80}. 
    (Closed) $\bm\rho$ is isotropic and saturates unless the density of electrons and holes 
    coincide, in which case $\rho\sim\|\bH\|^2$. 
    (Open) $\bm\rho$ is highly anisotropic and it shows qualitatively different behavior 
    in minima and maxima (resp. $\theta_2$ and $\theta_1$ in the picture): 
    in maxima $\rho\sim H^2$, in minima it saturates ($\theta$ is the angle 
    between $\bH$ and the crystallographic axis).
  }
\label{fig:mrgraph}
\end{figure}

Even in the free electrons approximation, studying the Schrodinger equation after adding a {\em weak} (enough not to
perturb the lattice) external magnetic field  is known to be a very hard problem already in two dimensions (see Novikov's
results in~\cite{Nov81} and~\cite{Nov83}). In particular, no exact magnetic analogue of Bloch waves is known when the magnetic
flux is irrational.
This hard obstacle led physicists to introduce several alternate models to study phenomena involving external fields in metals.
In late Fifties, I. Lifshitz, M.Ya. Azbel and M.I. Kaganov~\cite{LAK56} (from the Kharkov-Moscow school of solid state physics),
in order to explain some anysotropies found experimentally in the angular dependence of magnetoresistance on the direction
of an external magnetic field, proposed using a {\em semiclassical approximation} for the system.

Under this approximation, electrons are treated as ``classical particles'' but with $\bT^3$, rather than $\bR^3$,
as phase space. Under a {\em constant} magnetic field $B=B^idp_i$, the equations of motions are given by the classical ones:
$$
\dot p_i = \{p_i,\varepsilon(p)\}_B\,,
$$
where
$$
\{p_i,p_j\}_B = \epsilon_{ijk}B^k
$$
is the ``magnetic'' Poisson structure on $\bT^3$ and $\epsilon_{ijk}$ is the Levi-Civita tensor.
This dynamical system turns out to be quite non-trivial and related to the problem of level sets
of quasiperiodic functions with 3 quasiperiods. The bracket $\{,\}_B$, indeed, has a Casimir,
namely a function commuting with all other functions, represented by the ``function'' $S(p)=B^kp_k$,
but clearly $S$ on the phase space $\bT^3$ is {\em multivalued}. Notice that very few
Hamiltonian and Poissonian systems with multivalued first integrals have been studied by the
dynamical systems community to date, probably because very few of them originate from classical
mechanics. Notice also that the orbits of the quasimomenta of the electrons under these equations of motion
are given by the intersection of the FS by planes perpendicular to the magnetic field, namely they
are the level sets of a multivalued map $f=(\varepsilon,S):\bT^3\to\bR^2$ with the first
component singlevalued.

The relevance of the geometry and topology of the FS in physical phenomena is well known since Thirties,
when Justi and Scheffers showed evidences that the Fermi Surface $G$ of Gold is {\em open}~\cite{JS37} --
namely, in more formal terms, that the rank of the rings homomorphism $i_*:H_1(G,\bZ)\to H_1(\bT^3,\bZ)$
induced by the embedding $i:G\emb\bT^3$ is larger than 0 (in fact, it is maximal).
The model introduced in~\cite{LAK56} suggested that the behavior of 
magnetoresistance in monocrystals at low temperatures in high magnetic fields
would also be quite sensitive to the FS topology: as the intensity of the magnetic field $B$ grows, 
the magnetoresistance saturates isotropically to an asymptotic value if the orbits are all closed while it grows 
quadratically with $B$ if there are open orbits; moreover, in this last case the 
magnetoresistance is not isotropic and the conducibility tensor $\bm\sigma$ has rank 1.

%
\begin{figure}
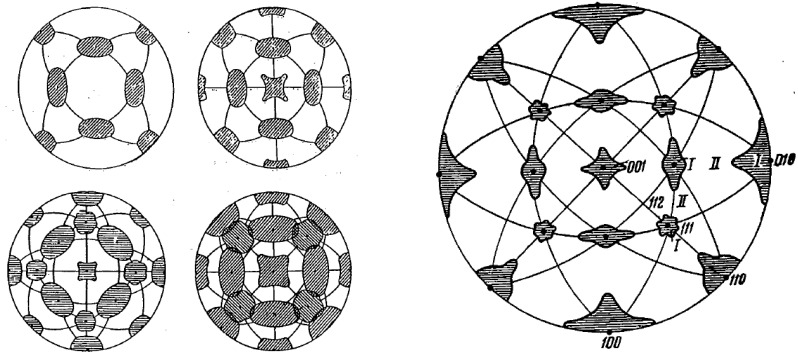

  \centering
  \includegraphics[width=5cm]{lp}\hskip.75cm\includegraphics[width=5cm]{Au-exp}
  \caption{%
    \small
    (left) Four qualitative sketches for the SM of a metal suggested by Lifshitz and Peschanskii in~\cite{LK60}.
    (right) Experimental data about magnetoresistance in gold monocrystals obtained by Yu.P. Gaidukov~\cite{Ga60}.
  }
\label{fig:Ga60}
\end{figure}


To these first theoretical predictions of the Lifshitz model (see Fig.~\ref{fig:mrgraph}) 
followed many other works studying magnetoresistance from both the 
experimental~\cite{Pip57,AG59,Ga60,AG60,AGLP61,AG62a,AG62b,AG62c,AG63,AKM64}
and theoretical~\cite{LAK56,LAK57,LAK73,LP59,LP60,Cha57,Cha60,Sho60,Sho62,Pip89} point 
of view; in particular, Stereographic Maps (SM) were experimentally built by plotting 
in a stereographic projection all magnetic field directions $B$ in which a quadratic rise 
of $\bm\sigma$ was observed (see Fig.\ref{fig:Ga60}). In those times the interest
on these magnetoresistance effects was due mainly to their utility as a tool to determine 
FS properties rather than as phenomena in their own right, and in particular SM maps provided
information about FS topology: e.g. clearly if $\bm\sigma$ grows quadratically for some direction
of $B$ then the FS must be open, and further analysis can lead to discover the directions of the openings.

Between Fifties and Seventies SM were experimentally found for about thirty metals
including Gold~\cite{Ga60}, Silver~\cite{AG62c}, Copper~\cite{Pip57,KRBK66},
Lead~\cite{AG62a}, Cadmium~\cite{AG63}, Zinc~\cite{GK66}, Thallium~\cite{AG63},
Gallium~\cite{RM62} and Tin~\cite{AG62b}.
Despite the theoretical efforts, though, no way was found to generate them with first-principles 
calculations and therefore no accurate direct verification of the Lifshitz model is 
available to date, except for the qualitative sketches by Lifshitz and Peschanskii~\cite{LP60}
(see Fig.\ref{fig:Ga60}); in particular, it was not known till now how closely the semiclassical 
model is able to reproduce these complex experimental data and, therefore, whether or not
further purely quanto-mechanical corrections to the model are needed.
As the magnetoresistance methods were replaced by newer and more accurate tools to
study FS, no new SM was experimentally produced and the problem was eventually abandoned.

The results on topological structure of the level lines of $f=(\epsilon,S)$ illustrated in
next section will shed full light on the relation between the direction of the magnetic
field and the direction of the non-compact trajectories, when they arise.
\subsubsection{Topology}
\label{subsec:q3m}
The main problem for the case $n=3$ can be formulated as follows: given a triply periodic function
$\varepsilon:\bR^3\to\bR$ and a constant 1-form $B$, determine when do open level
lines arise as intersections of a level surface $\varepsilon_c=\varepsilon^{-1}(c)$ with a level plane of $B$ and,
in case they do, find their asymptotics. We call simply $B$-sections such level lines. Note that $\varepsilon$
descends in $\bT^3$ to a smooth function and $B$ to a closed 1-form. By abuse of notation, we will use the
same symbol for the corresponding objects in $\bR^3$ and $\bT^3$. Since the $B$-sections only depend on
the {\em direction} of $B$, we will sometimes consider, by a further abuse of notation, $B\in\RPt$. It will be
clear by the context which object we mean by $B$.

The following definition is fundamental for this problem: 
\begin{definition}
  We call {\em topological rank} of an embedding $i:M^2_g\to\bT^3$ of a Riemann surface into the the 3-torus
  the rank of the induced ring homomorphism $ i_*:H_1(M^2_g,\bZ)\to H_1(\bT^3,\bZ)$.
\end{definition}
\begin{remark}
  Since $H_1(M^2_g,\bZ)\simeq\bZ^{2g}$ and  $H_1(\bT^3,\bZ)\simeq\bZ^{3}$,
  in order to have an embedding with maximal rank we must have $g\geq2$
  (see Fig.~\ref{fig:FSlevels} for examples).
\end{remark}
%
%

With respect to foliations induced on $M^2_g$ by general closed 1-forms, here we have a further structure due to the
embedding of $M^2_g$ inside $\bT^3$ and to the fact that the level lines are all planar in the universal covering.
This special situation allows us the following decomposition of $M^2_g$.
Let $C_i$ be the components of $\varepsilon_c$ filled by leaves homologous to zero in $\bT^3$. Then $M^2_g\setminus\cup C_i$
is the union of periodic and minimal components $\cK_j$ and $\cM_k$ whose boundaries are all loops homotopic to zero.
\begin{definition}
  The {\em genus} of the components $\cK_j$ and $\cM_k$ of $M^2_g$ is the genus of the corresponding manifold
  obtained by quotienting to a single point each boundary component. 
\end{definition}
The following observations by Dynnikov~\cite{Dyn96} are fundamental to understand the topological structure behind this problem:
\begin{theorem}[Dynnikov, 1996]
  If a component $\cK_j$ or $\cM_k$ of $\varepsilon_c$ has {\em topological rank} 2, then there is no component with higher rank and
  all components with topological rank 2 of $\varepsilon_{c'}$, $c'\in\bR$, share the same integral indivisible homology class in $H_2(\bT^3,\bZ)$ {\em modulo sign}.
\end{theorem}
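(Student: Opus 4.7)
The plan is to reduce each rank-$2$ component to a closed oriented $2$-cycle in $\bT^3$ and then exploit the structure of the cohomology ring $H^*(\bT^3,\bZ)=\Lambda^*\bZ^3$ together with disjointness of level sets.

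First I would cap off each component $\cK_j$ or $\cM_k$ along its boundary: by hypothesis the boundary is a union of compact leaves, each null-homotopic in $\bT^3$, so each bounds a disk (chosen, say, on the sublevel side $\varepsilon\le c$). Gluing these disks yields a closed oriented $2$-cycle $\Sigma\subset\bT^3$ representing a class $[\Sigma]\in H_2(\bT^3,\bZ)\simeq\bZ^3$. Because disks have trivial first homology, the topological rank of $\Sigma$ agrees with that of the original component.

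Next, for any closed $2$-cycle $\Sigma\subset\bT^3$, a loop on $\Sigma$ may be pushed off normally to a disjoint representative, so its algebraic intersection with $\Sigma$ is zero. Equivalently, $\langle\mathrm{PD}[\Sigma],[\gamma]\rangle=0$ for every $[\gamma]$ in the image of $i_*:H_1(\Sigma,\bZ)\to H_1(\bT^3,\bZ)$. When $[\Sigma]\neq 0$ the annihilator $\mathrm{PD}[\Sigma]^{\perp}\subset\bZ^3$ has rank $2$, so the topological rank is at most $2$; conversely, topological rank $3$ forces $[\Sigma]=0$. This is the key homological lemma on which everything rests.

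Both assertions of the theorem then follow by the same linking argument combined with disjointness. If a rank-$3$ component with capping $\Sigma_3$ coexisted with a rank-$2$ component with capping $\Sigma_2$ (where $[\Sigma_2]\neq 0$), one could arrange $\Sigma_2\cap\Sigma_3=\emptyset$ after a small perturbation of the caps, since components of one level set are disjoint and level sets at distinct values are disjoint. Every loop on $\Sigma_3$ would then lie in $\bT^3\setminus\Sigma_2$, giving $\mathrm{im}(H_1(\Sigma_3)\to H_1(\bT^3))\subseteq\mathrm{PD}[\Sigma_2]^{\perp}$ of rank at most $2$, a contradiction. Similarly, any two rank-$2$ cappings $\Sigma,\Sigma'$ (from possibly distinct level sets) admit disjoint representatives, whence $[\Sigma]\cdot[\Sigma']=0$ in $H_1(\bT^3)$ and $\mathrm{PD}[\Sigma]\wedge\mathrm{PD}[\Sigma']=0$ in $\Lambda^2\bZ^3\simeq H^2(\bT^3,\bZ)$. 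Two nonzero vectors in $\bZ^3$ wedge to zero iff they are linearly dependent over $\bR$, and primitivity forces them to agree up to sign, so $[\Sigma]=\pm[\Sigma']$.

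The main obstacle is upgrading "primitive direction" to genuine integral indivisibility of $[\Sigma]$: a class of the form $kv$ with $v$ primitive and $k\ge 2$ has the same annihilator as $v$ and is a priori compatible with topological rank $2$. Excluding this case appears to need a finer argument, for instance lifting to the $k$-fold cover of $\bT^3$ dual to $v$ and applying Maier's decomposition to the lifted level set to derive a contradiction with the assumed rank-$2$ structure.
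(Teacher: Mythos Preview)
The survey states this theorem without proof, citing Dynnikov's 1996 paper, so there is no argument in the present text against which to compare your attempt. Your homological route---cap each component to a closed $2$-cycle, observe that loops on it lie in the annihilator of its Poincar\'e dual, and exploit disjointness to force the classes to be parallel---is the natural one and is essentially what underlies Dynnikov's argument.

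Two technical points are looser than they should be. First, a null-homotopic $B$-leaf need not bound a disk ``on the sublevel side $\varepsilon\le c$''; the caps one actually uses are the planar disks that the leaf bounds inside its own $B$-plane in the universal cover, projected to $\bT^3$. Since $\pi_2(\bT^3)=0$, any two cappings are homologous and the class $[\Sigma]$ is well defined. Second, the capped cycles are typically only immersed, and the planar caps of one component can meet another component (or its caps) even when the uncapped pieces lie on disjoint level sets. Your phrase ``after a small perturbation of the caps'' hides real work: one either argues homologically (the intersection class depends only on homology, and one exhibits a convenient representative of $[\Sigma_2]$ disjoint from the relevant loops) or, as Dynnikov does, tracks how the rank-$2$ pieces vary continuously with the level $c$, so that the integral class is locally constant and the components at different levels are automatically compared.

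You have correctly isolated the genuine gap: your argument gives only that all rank-$2$ classes lie on a common rational line in $\bZ^3$, not that each class is itself primitive. Your covering idea points the right way. In the generic case the capped component has genus $1$; an embedded torus in $\bT^3$ with nonzero class is incompressible (compressing would produce a homologous sphere, hence class zero), and incompressible tori in $\bT^3$ are isotopic to linear sub-tori, whose classes are primitive. The higher-genus case is exactly where the companion observation (instability of components of genus $>1$ under variation of $c$) enters: one perturbs the level to reduce to the genus-$1$ situation. That reduction is not a purely homological trick and is the step you would need to import from \cite{Dyn96}.
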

\begin{theorem}[Dynnikov, 1996]
  If a component $\cK_j$ or $\cM_k$ of $\varepsilon_c$ has {\em genus} $g_0>1$, then there is a neighborhood $U$ of $c$
  such that all components of $\varepsilon_{c'}$ have genus smaller than $g_0$ for all $c'\in U$.
\end{theorem}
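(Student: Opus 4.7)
The plan is to show that a component of genus $g_0 > 1$ is a non-generic configuration held together by saddle connections of the foliation $B|_{\varepsilon_c}$, which break under any generic perturbation of $c$, so that on a suitable punctured neighborhood of $c$ the resulting components all have genus strictly less than $g_0$.

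First I would analyze the singular structure of the high-genus component $\cK_{j_0}$ (or $\cM_{k_0}$): its boundary consists of singular leaves of $B|_{\varepsilon_c}$ null-homotopic in $\bT^3$ and passing through saddles of the restricted $1$-form. An Euler characteristic bookkeeping -- a surface of genus $g_0$ with $b$ boundary components has $\chi = 2 - 2g_0 - b$, equal to the signed index sum of the critical points in the component -- combined with the combinatorics of the separatrix graph should force the existence of at least one saddle connection interior to the component whenever $g_0 > 1$: in the periodic case, a separatrix gluing two cylinders of compact leaves; in the minimal case, an edge of the singular graph supporting the minimal lamination whose removal splits the genus-$g_0$ subsurface into pieces of genus at most one.

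Next, since $c$ is a regular value of $\varepsilon$, for $c'$ in a small neighborhood of $c$ the surface $\varepsilon_{c'}$ deforms through a smooth family of diffeomorphisms and the restricted $1$-form $B|_{\varepsilon_{c'}}$ varies smoothly; its Morse critical points persist nondegenerately and their separatrices depend smoothly on $c'$. The coincidence of a stable separatrix from one saddle with an unstable separatrix from another is a codimension-one transversality condition in $c'$, so there is a punctured neighborhood $U$ of $c$ throughout which every saddle connection of the configuration at $c$ has been destroyed. Each such breakage corresponds to a topological surgery on the component along an essential curve (the former connection together with a short transversal): a handle is cut open and replaced by two disks, which either disconnects the component into two pieces of strictly smaller genus or reduces the genus of a single piece by at least one. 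Iterating over all broken connections yields components of $\varepsilon_{c'}$ all of genus $< g_0$.

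The hardest step will be the first one: rigorously tying genus $g_0 > 1$ to the presence of a \emph{breakable} saddle connection whose removal strictly reduces the genus. This requires the structure theory of measured foliations on compact surfaces with boundary -- particularly delicate in the minimal case, where the singular graph can be combinatorially intricate -- together with care that every piece produced by a single surgery lies strictly below $g_0$; once this is in place, the other two steps follow from standard transversality arguments for families of Morse $1$-forms.
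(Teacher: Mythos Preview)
The survey does not supply its own proof of this statement, so let me assess your proposal directly and against Dynnikov's original argument. Your plan has a genuine gap at exactly the step you flag as hardest: the claim that a component of genus $g_0>1$ must contain an interior saddle connection whose removal strictly drops the genus is \emph{false} for minimal components. It is a standard fact about measured foliations that a minimal foliation on a surface of genus $\geq 2$ generically has no saddle connections whatsoever---every separatrix is a half-infinite dense leaf, and there is no finite ``singular graph'' to cut. The chaotic regime of the Novikov problem, which is precisely the regime producing components of genus $>1$, is full of such examples (indeed this is what makes the set $\cE(f)$ nontrivial). In that situation your perturbation mechanism has nothing to act on, and no Euler-characteristic bookkeeping will manufacture a saddle connection that is not there.

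Step~2 is also not free: varying $c$ moves $\varepsilon_c$ by an ambient isotopy of $\bT^3$, so the family $c\mapsto i_c^*B$ stays in a \emph{fixed} cohomology class on a fixed diffeomorphism type. This is one very particular one-parameter path, and transversality to the saddle-connection locus would have to be proved, not assumed. Dynnikov's actual argument is three-dimensional rather than surface-foliation-theoretic: one caps each boundary loop of the component with the planar disk it bounds inside a leaf of $\ker B$, obtaining a closed genus-$g_0$ surface in $\bT^3$, and then exploits the nesting of the solid regions these capped surfaces bound as $c$ varies monotonically. The genus drop is forced by this bordism-in-$\bT^3$ picture together with Morse theory for $\varepsilon$ on the $2$-planes of $\ker B$, not by destroying saddle connections of $B|_{\varepsilon_c}$.
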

%
%
\begin{definition}
  We denote by $\cF_B(M^2_g)$ the foliation induced by $B$ on $M^2_g\emb\bT^3$ and we say that $\cF_B(M^2_g)$ is:
  \begin{enumerate}
  \item {\em trivial} if both the genus and the topological rank of all components $\cK_j$ are not larger than 1 (in this case no $\cM_k$ arises);
  \item {\em integrable} if the genus of all its components $\cK_j$ and $\cM_k$ is not larger than 1 and there is at least one component with topological rank 2;
  \item {\em chaotic} if some component $\cK_j$ or $\cM_k$ has genus larger than 1.
  \end{enumerate}
  In the integrable case, the indivisible 2-cycle $\ell\in H_2(\bT^3,\bZ)$ common, modulo sign, to all rank-2 components
  is called the {\em soul} of $\cF_B(M^2_g)$.
\end{definition}
\begin{remark}
  Every non-zero integer 2-cycle in $\bT^3$ determines a {\em rational direction} in its universal cover $\bR^3$. By abuse of notation,
  therefore, {\em throughout this section we often consider $\ell$ as an element of $\QPt$ rather than $H_2(\bT^3,\bZ)$.}
\end{remark}
The structure of open $B$-sections in the integrable case is very simple~\cite{Dyn92}:
\begin{proposition}[Dynnikov, 1992]
  Let $N\emb\bT^3$ be a surface embedded with topological rank 2 and let $\ell=[N]\in H_2(\bT^3,\bZ)$. Then
  the open $B$-sections of $N$ are strongly asymptotic to a straight line with direction $\ell\times B$.
\end{proposition}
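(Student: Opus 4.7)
My plan is to lift the whole configuration to the universal cover $\bR^3\to\bT^3$ and show that the topological rank $2$ hypothesis forces the lifted surface $\tilde N=\pi^{-1}(N)$ into a $\bZ^3$-periodic family of parallel affine slabs. Once the slab picture is in hand, cutting by a level plane of $B$ yields bounded-width strips whose common axis is $\ell\times B$, which is exactly the strong asymptoticity to a straight line claimed.

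For the slab picture, I would let $\Gamma=i_*(H_1(N,\bZ))\subset H_1(\bT^3,\bZ)\cong\bZ^3$, which has rank $2$ by assumption, and let $V_\ell\subset\bR^3$ be its $\bR$-span. Since $\ell$ is the class of a connected embedded surface, it is indivisible, hence Poincar\'e dual to an indivisible integer linear form $\phi$ on $\bR^3$ with $\ker\phi=V_\ell$. Factor the covering as $\bR^3\to\bR^3/\Gamma\cong\bR\times\bT^2\to\bT^3$, choosing coordinates so that the $\bR$-factor points along $\phi$. The key topological step is to argue that each connected component of the preimage of $N$ in $\bR^3/\Gamma$ is homeomorphic to $N$ itself: this is because $\Gamma$ is exactly the image of $\pi_1(N)$, so the intermediate covering restricted to each component classifies a trivial cover. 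Each such component is therefore compact, hence confined to a bounded $\phi$-interval; its $\bZ$-translates under $\bZ^3/\Gamma$ have the same width $W$ and uniform spacing. Lifting one level further, $\tilde N$ is contained in a $\Gamma$-periodic union of slabs $\{|\phi-c_k|\leq W\}$.

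Next I would fix a level plane $\Pi_B=\{x:B\cdot x=c\}$. Each connected component of $\tilde N\cap\Pi_B$ lies inside the intersection of $\Pi_B$ with a single slab $\{|\phi-c_k|\leq W\}$, which is a strip of $\Pi_B$ bounded by two parallel lines of common direction $V_\ell\cap\Pi_B$. That direction is simultaneously orthogonal to $\phi$ (i.e.\ to $\ell$ regarded as a vector via Poincar\'e duality) and to $B$, so it is parallel to $\ell\times B$. A non-compact lift of an open $B$-section is thus an unbounded curve trapped in a strip of finite width along $\ell\times B$, which is precisely what is meant by strong asymptoticity to a straight line of that direction.

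The step I expect to be the main obstacle is the compactness of the components of the intermediate lift, i.e.\ making precise the claim that $\Gamma$ accounts for all of $\pi_1(N)$ and that nothing wraps around the $\bZ$-factor of $\bR^3/\Gamma$. This requires both the indivisibility of $\ell$ and the structural input (the first Dynnikov theorem quoted above) that forbids any component from having topological rank exceeding $2$. Once the slab structure is secured, the rest is elementary linear algebra inside $\Pi_B$.
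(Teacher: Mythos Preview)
The paper does not actually prove this proposition; it states it with a citation to Dynnikov's 1992 paper and moves on, so there is nothing in the survey to compare your argument against.

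Your argument is correct and is the standard way to see the result. Two remarks on the points you flag as potential obstacles.

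First, the claim that $\ell=[N]$ is automatically indivisible because $N$ is connected and embedded is false in general (two parallel $2$-tori in $\bT^3$ with the same orientation, joined by a tube, give a connected embedded surface with class divisible by $2$). Fortunately you never use indivisibility: all you need is $\Gamma\otimes\bR=\ker\phi$ over $\bR$, and this follows once you make explicit the one fact you left implicit, namely that every $1$-cycle carried by $N$ has zero intersection number with $[N]$ in $\bT^3$, so $\Gamma\subset\ker\phi$ and both sides have rank~$2$ as soon as $\ell\neq0$.

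Second, the compactness of the components of the lift of $N$ to $\bR^3/\Gamma$ is not an obstacle and does not require Dynnikov's structural theorem on rank-$2$ components. Since $\pi_1(\bT^3)$ is abelian, the image $i_*(\pi_1(N))$ equals $i_*(H_1(N,\bZ))=\Gamma$ on the nose; by elementary covering-space theory the component of the preimage of $N$ through any lift of a basepoint corresponds to the subgroup $i_*^{-1}(\Gamma)=\pi_1(N)$, hence is the trivial cover of $N$ and in particular compact. The rank-$2$ hypothesis is already on $N$ itself, so there is no larger surface whose other components you need to control. Once you drop that unnecessary appeal, the slab-and-strip argument you outline is complete.
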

Once a function $\varepsilon$ has been fixed, since only the direction of $B$ determines the foliation, the ``phase space'' of this problem 
is $[a,b]\times\RPt$, where $[a,b]=\varepsilon(\bT^3)$. Correspondingly, there are two natural ways to tackle this problem: either fix a level
for $\varepsilon$ and study what happens by tilting $B$, or fix $B$ and study what happens by changing the
``energy level''. The first approach was taken by Zorich and resulted in the following theorem (see Fig.~\ref{fig:FSlevels}), based on a clever
elementary topological argument~\cite{Zor84}:
\begin{theorem}[Zorich, 1984]
  \label{thm:rational}
  Consider an embedding $i:M^2_g\to\bT^3$ and let $B$ be a {\em rational} constant 1-form on $\bT^3$ such that
  $\omega=i^*B$ is a Morse closed 1-form on $M^2_g$. Then $\cF_{B}(M^2_g)$ is either trivial or integrable and $\cF_{B'}(M^2_g)$ is,
  correspondingly, trivial or integrable for all $B'$ close enough to $B$.
\end{theorem}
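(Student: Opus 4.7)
My approach is to exploit the rationality of $B$ to force all leaves of $\omega = i^*B$ to be compact, analyze the resulting annular decomposition of $M^2_g$, and then use a structural-stability argument to extend the result to small perturbations.

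Since $[B] \in H^1(\bT^3,\bQ)$, after rescaling (which does not change the foliation) we may assume $B = d\phi$ for a smooth map $\phi:\bT^3 \to \bS^1$. Then $\omega = d(\phi \circ i)$ is, up to a constant, the differential of a Morse circle-valued function on $M^2_g$, so $\irr\omega \le 1$. By Imanishi's theorem, every leaf of $\omega$ is compact and $\cF_B(M^2_g)$ consists only of periodic components. Each periodic component $\cK_j$ carries a closed nonsingular 1-form $\omega|_{\cK_j}$ whose leaves are all compact circles, so $\cK_j$ is a locally trivial circle bundle over $\bR$ or over $\bS^1$. Consequently $\cK_j$ is an open annulus $S^1 \times (a,b)$, or --- in the exceptional case $g=1$ with $\omega$ everywhere nonsingular --- the entire torus $M^2_1$. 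Collapsing each boundary component of $\cK_j$ to a point produces either $S^2$ or $T^2$, so no component has genus $>1$, and $\cF_B(M^2_g)$ is trivial or integrable. A rank-$2$ component is possible only in the exceptional case, where $\cK_j = M^2_1$ and the embedding has topological rank $2$; this is the ``integrable'' alternative.

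For the stability, let $B'$ be close to $B$ and set $\omega' = i^*B'$. Inside any compact subannulus $\cK_j^\delta \subset \cK_j$ bounded by two regular leaves of $\omega$, the perturbed form $\omega'$ remains nonsingular and transverse to $\partial\cK_j^\delta$. Hence each leaf of $\cF_{B'}$ entering $\cK_j^\delta$ crosses it as a curve $C^\infty$-close to the original compact leaves, lying in the same homology class $\alpha_j \in H_1(\bT^3,\bZ)$. The residual region $M^2_g \setminus \bigcup_j \cK_j^\delta$ is a thin neighborhood of the singular leaves of $\omega$, which bound the annular components and are therefore contractible in $\bT^3$. Thus every component of $\cF_{B'}$ is assembled from genus-$0$ pieces of the $\cK_j^\delta$ (or, in the exceptional case, is all of $M^2_1$) glued across homotopically trivial transitional regions. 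Such a gluing along contractible curves in $\bT^3$ cannot create components of genus $>1$, because an extra handle would contribute an independent class in $H_1(\bT^3)$ transverse to $\alpha_j$, contradicting the confinement of $\omega'$-leaves to direction $\alpha_j$ inside each $\cK_j^\delta$. The integrability invariant is manifestly preserved because a rank-$2$ component must be the entire surface $M^2_1$, which is trivially stable.

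The main obstacle is the last topological step: controlling how broken saddle connections in the transitional region can merge annular pieces of $\cF_{B'}$ without introducing higher genus. The crux is that the separating singular leaves are contractible in $\bT^3$, so any cycle of a glued component is inherited either from the $\alpha_j$-confined leaves inside an annular piece or from a topologically trivial transitional region, neither of which can produce more than one independent class in $H_1(\bT^3)$ and hence not more than genus $1$.
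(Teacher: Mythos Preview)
Your argument has a fundamental gap: you have identified the wrong components. The $\cK_j$ and $\cM_k$ in the definition of ``trivial'' and ``integrable'' are \emph{not} the Maier periodic components (which, as you correctly observe, are open annuli when $\omega$ is rational and Morse). One first removes from $M^2_g$ the cylinders $C_i$ filled by leaves homologous to zero \emph{in $\bT^3$}; the $\cK_j$ are the connected pieces of what remains. Such a $\cK_j$ is typically a union of several Maier annuli glued across singular leaves, and after collapsing its boundary circles it can perfectly well have genus $1$ and topological rank $2$. Your assertion that a rank-$2$ component occurs only when $g=1$ is therefore false: for every $g\ge 2$ and every rational $B$ in the interior of a stability zone $\cD_\ell(M^2_g)$ the foliation $\cF_B(M^2_g)$ is integrable, with genus-$1$ rank-$2$ components carrying the soul $\ell$. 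This is exactly the phenomenon the theorem is meant to establish, and your argument rules it out.

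What is missing is the geometric input coming from the ambient torus. Since the leaves are level sets of $\phi\circ i:M^2_g\to\bS^1$, the surface decomposes into elementary pants cobordisms, and two of the three boundary circles of each pair of pants lie on the \emph{same} $2$-torus $\phi^{-1}(c)\subset\bT^3$. Two disjoint embedded circles on a $2$-torus are either null-homologous or parallel; together with $[\partial_1]+[\partial_2]+[\partial_3]=0$ this forces at least one boundary of every pair of pants to be null-homologous in $\bT^3$. That circle then sits in some $C_i$, and this is what caps the genus of each $\cK_j$ at $1$. Your stability argument inherits the same defect (and adds new ones): the claim that the singular leaves separating your annuli are ``contractible in $\bT^3$'' is unsupported---a figure-eight saddle leaf can carry nontrivial homology---and the sentence about an ``extra handle contributing an independent class transverse to $\alpha_j$'' is too vague to be a proof. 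With the correct decomposition in hand, stability follows because a compact leaf homotopic to zero in $\bT^3$ remains homotopic to zero after a small perturbation of $B$, so the cylinders $C_i$ persist and the genus-$1$ pieces survive.
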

\begin{proof}
  Every rational closed 1-form on $M^2_g$ is the differential of a map $f_\omega:M^2_g\to\bS^1$. This is a Morse map
  by hypothesis and so decomposes $M^2_g$ in elementary cobordisms, the only non-trivial one being pants,
  {\em whose boundary loops are $B$-sections}.
  Since, by construction, two of the three boundaries of such pants lie on the same 2-torus embedded in $\bT^3$,
  it follows easily that at least one of the three must be homologous, and so homotopic, to zero in $\bT^3$.
  Hence, the genus of all periodic components $\cK_i$ (if any) is equal to 1 (by Thm~\ref{thm:Ima} there are
  no minimal components). Under small enough perturbations, closed leaves non homotopic to zero might become open
  (namely minimal components might arise) but the leaves homotopic to zero will stay so and so all periodic
  and minimal components will still have genus 1, namely $\cF_{B}(M^2_g)$ is either trivial or integrable.
\end{proof}
\begin{figure}
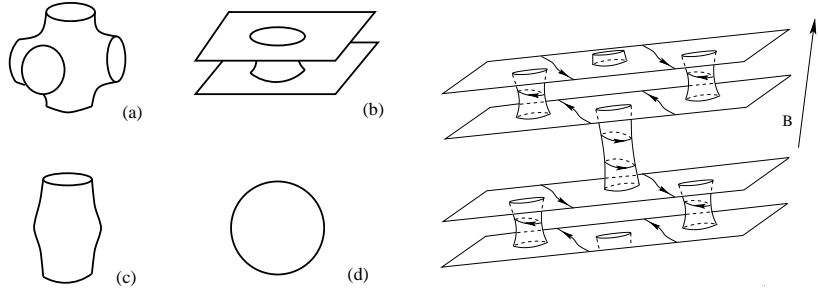

  \centering
  \includegraphics[width=5cm]{rk}\hskip.75cm\includegraphics[width=5cm]{SchemFermiSurf}
  \caption{%
    \footnotesize
    (left) Surfaces embedded with rank 0, 1, 2 and 3 in $\bT^3$.
    (right) A genus 4 surface embedded with rank 3 in $\bT^3$ and some of its closed and open $B$-sections for some $B$ nearly vertical.
  }
\label{fig:FSlevels}
\end{figure}
%
The second approach was taken by I.A. Dynnikov, who proved, in a series of articles~\cite{Dyn92,Dyn96,Dyn99}
and with much more sophisticated arguments, the results below.
\begin{theorem}[Dynnikov, 1999]
  Let $\varepsilon:\bT^3\to\bR$ be a Morse function. Then there are functions
  $e_1,e_2:\real{\mathrm P}^2\rightarrow\real$ such that
  \begin{itemize}
  \item
    $e_1(B)\leqslant e_2(B)$ for all $B\in\real{\mathrm P}^2$;
  \item
    $\cF_B(\varepsilon_c)$ is trivial if and only if $c\notin[e_1(B),e_2(B)]$;
  \item
    if $e_1(B)<e_2(B)$, then $\cF_B(\varepsilon_c)$ is integrable for all $c\in[e_1(B),e_2(B)]$,
    and its soul $\ell_B(\varepsilon_c)$ is independent of $c$.
  \end{itemize}
\end{theorem}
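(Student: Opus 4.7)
The plan is, for each fixed $B\in\RPt$, to study the set
$$
T(B)=\{c\in\varepsilon(\bT^3)\,:\,\cF_B(\varepsilon_c)\text{ is non-trivial}\},
$$
prove that it is a closed interval $[e_1(B),e_2(B)]$ (possibly empty or a single point), and show that whenever this interval has positive length every $c$ inside it gives an integrable foliation sharing the same soul. The constancy of the soul on $T_{int}(B)$ is automatic from Dynnikov's rank theorem recalled above: all rank-2 components of all level sets of $\varepsilon$ represent, modulo sign, a single indivisible class $\ell\in H_2(\bT^3,\bZ)$ depending only on the pair $(\varepsilon,B)$.

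The heart of the proof is the connectedness of $T(B)$. Given $c_1<c_2$ both in $T_{int}(B)$, choose rank-2 components $N_i\subset\varepsilon_{c_i}$; the rank theorem forces $[N_1]=\pm[N_2]=\pm\ell$. The surfaces $N_1,N_2$ are disjoint, both contained in the slab $W=\varepsilon^{-1}([c_1,c_2])$, and both represent the same non-trivial 2-cycle. For any intermediate level $c\in(c_1,c_2)$ the sublevel set $\varepsilon^{-1}([c_1,c])\cap W$ has $N_1$ in its boundary while $N_2$ lies in the complementary piece of $W$; hence $\varepsilon_c$ must separate $N_1$ from $N_2$ inside $W$, and a standard homological argument shows that $\varepsilon_c$ must contain an embedded component realising the class $\pm\ell$. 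Such a component has topological rank $2$, so $c\in T(B)$.

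Chaotic values are pinned down by Dynnikov's genus theorem combined with the Morse structure of $\varepsilon$. Between consecutive critical values of $\varepsilon$ the level surface $\varepsilon_c$ evolves by diffeomorphism, so the maximal genus is constant on each open piece; Dynnikov's theorem forbids this constant from exceeding $1$, because a value $c$ with max genus $g_0>1$ must be strictly isolated from other points of the same genus. Hence chaotic values occur only at (finitely many) critical levels of $\varepsilon$, and are isolated in $[a,b]$. Moreover, if some chaotic $c_0$ were to lie in the interior of $T(B)$, the slab argument above would produce rank-2 components of class $\pm\ell$ on both sides of $c_0$; passing to the limit along a subsequence, these components accumulate (using compactness of $\bT^3$) on an embedded surface in $\varepsilon_{c_0}$ still representing $\pm\ell$, which is a rank-2 component. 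But then the chaotic component of $\varepsilon_{c_0}$, lying in a different piece of $\varepsilon_{c_0}$, would coexist with a rank-2 component of class $\ell$, forcing by the rank theorem a rigidity that is ruled out by the genus-drop in a punctured neighborhood; so no chaotic point can lie in the interior of $T(B)$.

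Closedness of $T(B)$ follows from the persistence of the class $\pm\ell$: limits of rank-2 components of class $\pm\ell$ remain surfaces of class $\pm\ell$, hence still of rank $2$, defining $e_1(B)$ and $e_2(B)$ as the extreme values of $c$ at which such a component still survives. The main obstacle, and the most delicate step, is tracking the rank-2 components across the critical values of $\varepsilon$: one must verify that Morse surgery between consecutive critical levels either carries the rank-2 component of class $\pm\ell$ through intact (so that the slab argument continues to apply) or destroys it entirely, in which case the corresponding critical value is precisely an endpoint $e_1(B)$ or $e_2(B)$.
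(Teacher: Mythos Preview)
The paper does not supply a proof (or even a proof outline) for this theorem; it is simply cited as Dynnikov's 1999 result, so there is nothing in the paper to compare against and I can only evaluate your argument on its own terms.

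Your overall plan is reasonable and correctly singles out the two Dynnikov 1996 theorems as the engine, but there is a genuine gap in the step controlling chaotic values. You write that ``between consecutive critical values of $\varepsilon$ the level surface $\varepsilon_c$ evolves by diffeomorphism, so the maximal genus is constant on each open piece.'' The isotopy claim is correct, but the genus you need is not the genus of $\varepsilon_c$: it is the maximal genus among the components $\cK_j,\cM_k$ of the \emph{$B$-decomposition}, and that decomposition is not carried by the isotopy. The gradient-flow diffeomorphism $\phi_c:\varepsilon_{c_0}\to\varepsilon_c$ pulls $i_c^*B$ back to a closed 1-form on $\varepsilon_{c_0}$ that is merely \emph{cohomologous} to $i_{c_0}^*B$; its singular set (the locus where the $B$-plane is tangent to $\varepsilon_c$) moves with $c$, saddle connections of the $B$-foliation form and break, and the cylinders $C_i$ of null-homologous $B$-sections can be born or destroyed at values of $c$ that are perfectly regular for $\varepsilon$. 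Hence the component genera can jump away from the critical set of $\varepsilon$, and your conclusion that chaotic values lie only at the finitely many critical levels is unsupported. Without it, the genus theorem alone does not make the chaotic set discrete (a point of maximal component-genus $g_0$ is only isolated from other points of genus $\geq g_0$, not from all chaotic points), and the subsequent compactness/limit step---already only sketched, since Hausdorff limits of embedded rank-2 pieces need not be embedded surfaces carrying the class $\pm\ell$---cannot proceed. Note also that the theorem asserts integrability on the \emph{closed} interval $[e_1(B),e_2(B)]$ whenever $e_1<e_2$, so you must exclude chaotic behaviour at the endpoints as well, which your interior-only argument does not address.
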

\begin{definition}
  Given a surface $M^2_g$ embedded in $\bT^3$ and a $\ell\in\rational\mathrm{P}^2$, 
  we call {\em stability zone} for $M^2_g$ relative to $\ell$ the set 
  $$
  \zone_{\ell}(M^2_g)=\{B\in\real\mathrm{P}^2\;;\;\ell_{M^2_g,B}=\ell\}\,.
  $$
  We denote by $\cD(M^2_g)$ the union of all stability zones and we call {\em stereographic map} of $M^2_g$ the map
  $$
  \ell_{M^2_g}:\cD(M^2_g)\to\rational\mathrm{P}^2
  $$
  associating the soul to each $B$ that has one. Finally, we denote by $\cE(M^2_g)$ the set of directions for which
  $\cF_B(M^2_g)$ is chaotic.
\end{definition}
\begin{proposition}[Dynnikov, 1999]
  Let $M$ and $N$ two disjoint embedded surfaces in $\bT^3$. Then $\ell_M$ and $\ell_N$ are compatible
  on all directions on which they are both defined.
  \end{proposition}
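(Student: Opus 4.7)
The plan is to realize both $M$ and $N$ as connected components of regular level sets of a single Morse function $\varepsilon:\bT^3\to\bR$ and then to invoke the Dynnikov 1996 theorem quoted above, which forces all topological rank-$2$ components across all level sets of such an $\varepsilon$ to share the same indivisible class in $H_2(\bT^3,\bZ)$ modulo sign.

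First I would construct $\varepsilon$. Since $M$ and $N$ are disjoint closed embedded surfaces in the orientable manifold $\bT^3$, they are two-sided and admit disjoint trivial tubular neighborhoods $U_M\simeq M\times(-\delta,\delta)$ and $U_N\simeq N\times(1-\delta,1+\delta)$. I would set $\varepsilon$ equal to the normal coordinate on $U_M$ and to $1$ plus the normal coordinate on $U_N$, so that $\varepsilon^{-1}(0)\cap U_M=M$ and $\varepsilon^{-1}(1)\cap U_N=N$, extend smoothly over $\bT^3\setminus(U_M\cup U_N)$, and finally perturb this extension --- keeping the support of the perturbation disjoint from $U_M\cup U_N$ --- until $\varepsilon$ becomes globally Morse. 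Then $0$ and $1$ remain regular values, $M$ is a connected component of $\varepsilon^{-1}(0)$, and $N$ is a connected component of $\varepsilon^{-1}(1)$; moreover the Maier decompositions of $M$ and $N$ induced by $B$ coincide with those of the corresponding components of $\varepsilon^{-1}(0)$ and $\varepsilon^{-1}(1)$, since these decompositions depend only on $B|_M$ and $B|_N$.

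Next I would apply the Dynnikov 1996 theorem. Because $B\in\cD(M)\cap\cD(N)$, both $\cF_B(M)$ and $\cF_B(N)$ are integrable, so each of their Maier decompositions contains at least one periodic or minimal component of topological rank $2$, realizing the souls $\ell_M(B)$ and $\ell_N(B)$ respectively. These same components appear in the Maier decompositions of $\varepsilon^{-1}(0)$ and $\varepsilon^{-1}(1)$. By the Dynnikov 1996 theorem, every rank-$2$ component occurring in any level set $\varepsilon^{-1}(c)$ carries the same indivisible class in $H_2(\bT^3,\bZ)$ modulo sign. Hence $\ell_M(B)=\pm\ell_N(B)$, which yields the equality as elements of $\QPt$ --- the desired compatibility.

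The only step requiring care is the construction of $\varepsilon$: the Morse-making perturbation must be confined to the complement of small collars of $M$ and $N$, and the smooth interpolation on $\bT^3\setminus(U_M\cup U_N)$ must be arranged so that $0$ and $1$ stay regular values of the final function. Neither requirement is obstructed (two-sidedness of $M$ and $N$ is granted by the orientability of $\bT^3$), and once $\varepsilon$ is in hand the content of the argument is entirely encoded in the Dynnikov 1996 theorem.
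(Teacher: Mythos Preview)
The paper does not supply a proof of this proposition; it is stated as a result of Dynnikov (1999) in a survey context. So there is no ``paper's own proof'' to compare against, and the question is simply whether your argument is sound.

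Your proof is correct. The key reduction --- realizing the two disjoint surfaces $M$ and $N$ as connected components of two distinct regular level sets $\varepsilon^{-1}(0)$ and $\varepsilon^{-1}(1)$ of a single Morse function on $\bT^3$ --- is exactly the right move, and the construction via disjoint trivial tubular neighborhoods (guaranteed by two-sidedness in the orientable $\bT^3$) goes through. Once $\varepsilon$ is in place, the Dynnikov 1996 theorem quoted in the survey (rank-$2$ components of \emph{all} level sets $\varepsilon_{c'}$ share the same indivisible class in $H_2(\bT^3,\bZ)$ modulo sign) applies verbatim and forces $\ell_M(B)=\pm\ell_N(B)$ in $H_2(\bT^3,\bZ)$, hence equality in $\QPt$.

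A small point worth making explicit: in your construction $\varepsilon^{-1}(0)$ and $\varepsilon^{-1}(1)$ may have connected components other than $M$ and $N$, and the behavior of the $B$-foliation on those extra components is uncontrolled. This does not matter, because the 1996 theorem is a statement about \emph{all} rank-$2$ components across \emph{all} levels of $\varepsilon$; the presence of a single rank-$2$ component (supplied by $M$, since $B\in\cD(M)$) already pins down the common class. Your argument uses the theorem exactly as stated, so this is not a gap --- but it is the one place a reader might pause.
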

\begin{definition}
  Given a function $f:\bT^3\to\bR$, and a $\ell\in\rational\mathrm{P}^2$, we call {\em stability zone}
  for $f$ relative to $\ell$ the set 
  $$
  \zone_{\ell}(f)=\cup_{c}\cD_\ell(f_c)
  $$
  and we denote by $\cD(f)$ the union of all stability zones. We call {\em stereographic map} of $f$ the map
  $$
  \ell_{f}:\cD(f)\to\rational\mathrm{P}^2
  $$
  associating the soul to each $B$ that has one for some $c\in\bR$. Finally, we set
  $$
  \cE(f)=\cup_{c}\cE(f_c).
  $$
\end{definition}
%
%
%
\begin{theorem}[Dynnikov, 1999]
  \label{p1}
  For a generic surface $M^2_g\subset\bT^3$, the sets $\zone_{M^2_g,\ell}$ are disjoint closed domains with piece-wise
  smooth boundary. The set $\cE(M^2_g)$ is disjoint from $\rational\mathrm P^2$ and has zero measure.
  The set of directions $B$ with trivial $B$-sections is open.
\end{theorem}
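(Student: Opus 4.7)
The plan is to attack the three claims using Theorem~\ref{thm:rational} (Zorich's rational stability) together with the two preceding Dynnikov theorems on uniqueness of rank-2 components as the main tools.

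First I would show that the stability zones are pairwise disjoint, which is immediate from the definition since each $B\in\cD(M^2_g)$ has a single well-defined soul. For closedness, I would take $B_n\to B$ with each $B_n\in\zone_{M^2_g,\ell}$. By definition there exist $c_n\in[a,b]$ such that $\varepsilon_{c_n}$ carries a rank-2 component of class $\ell$; extracting a convergent subsequence $c_n\to c$, the first Dynnikov theorem (any two rank-2 classes in the same foliation agree modulo sign) together with continuity of the critical-value structure of $\varepsilon$ restricted to planes normal to $B$ forces $\varepsilon_c$ to carry a rank-2 component of class $\ell$ in the limit, so $B\in\zone_{M^2_g,\ell}$. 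That each non-empty zone has non-empty interior is the rational-direction case of Theorem~\ref{thm:rational}: every rational $B$ inside a zone admits an open neighborhood on which the foliation is integrable with the same soul, because the only indivisible integral class close to $\ell$ is $\ell$ itself.

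For the piecewise smooth boundary, I would argue that $\partial\zone_{M^2_g,\ell}$ is the locus where a new rank-2 class $\ell'\neq\pm\ell$ is about to appear, or (by the second Dynnikov theorem) where a rank-1 component bifurcates into higher-genus pieces; both events are detected by the resonance of a saddle connection of $\cF_B(\varepsilon_c)$, which is a codimension-one smooth condition for a generic embedding. Openness of the set of trivial directions follows from a Reeb-type stability argument: if every $B$-section of every $\varepsilon_c$ bounds a disk in $\bT^3$, the bounding closed leaves persist under small $C^0$-perturbations of $B$, uniformly in $c$. Disjointness of $\cE(M^2_g)$ from $\QPt$ is a direct consequence of Theorem~\ref{thm:rational}, which forces rational $B$ to give a trivial or integrable foliation and therefore excludes it from $\cE(M^2_g)$.

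The main obstacle is the zero-measure statement for $\cE(M^2_g)$. My plan is first to observe that $\cE(M^2_g)$ is closed, being the complement of the open trivial set together with the interior of $\cD(M^2_g)=\cup_\ell\zone_{M^2_g,\ell}$. Since rationals are dense in $\RPt$ and each rational direction lies in the interior of a stability zone or in the open trivial set (by Theorem~\ref{thm:rational}), one immediately gets that $\cE(M^2_g)$ has empty interior. Upgrading \emph{empty interior} to \emph{zero Lebesgue measure} requires quantitative control on the size of nested stability zones accumulating at each rational direction: I expect this to be the real difficulty. The natural route is to encode the foliation on a generic $B$-plane as an interval exchange transformation and exploit Rauzy–Veech-style renormalization, showing that chaotic directions survive only on a set of Hausdorff dimension strictly less than $2$; this is Dynnikov's approach and I would follow it rather than attempt a self-contained covering argument.
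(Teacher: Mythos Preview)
The paper is a survey and supplies no proof of this theorem; it is simply attributed to Dynnikov~\cite{Dyn99}. So there is nothing in the paper to compare your proposal against directly, and I can only assess it on its own merits and against what is known of Dynnikov's actual arguments.

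There are two genuine gaps. First, your closedness argument confuses the fixed-surface setting of this theorem with the function setting of Theorem~\ref{p3}: you speak of ``$c_n\in[a,b]$ such that $\varepsilon_{c_n}$ carries a rank-2 component'', but $M^2_g$ here is a \emph{single} embedded surface, not a one-parameter family of level sets, and there is no parameter $c$ to vary. Closedness has to be argued on the fixed surface, for instance by showing that the cylinders of null-homologous leaves separating the genus-1 rank-2 components persist as $B_n\to B$, so that the decomposition survives in the limit with the same homology class.

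Second, your route to the zero-measure statement does not work as stated. You propose to encode the planar foliation as an interval exchange and run Rauzy--Veech renormalization to obtain Hausdorff dimension strictly below $2$, calling this ``Dynnikov's approach''. It is not: Dynnikov's 1999 argument predates the application of interval-exchange renormalization to this problem, and the dimension bound you invoke is the 2016 Avila--Hubert--Skripchenko result, proved only for the $\mu$-cube (the theorem immediately following Conjecture~\ref{conj:zm} in the paper). For a general generic surface no such bound is known --- indeed Conjecture~\ref{conj:zm} is still open --- so the method you sketch is simply not available. Your observation that $\cE(M^2_g)$ has empty interior (via density of rationals and Theorem~\ref{thm:rational}) is correct and is the easy part; the substantive content is the upgrade from \emph{nowhere dense} to \emph{measure zero}, and your proposal does not supply it.
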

\noindent
Examples of such stability zones are shown in Fig.~\ref{fig:nm}.
%
\begin{theorem}[Dynnikov, 1999]
  \label{p3}
  Stability zones of a generic $f\in C^\infty(\bT^3)$ are closed domains with piecewise smooth boundary.
  If $\ell\ne\ell'$ then $\mathcal D_{\ell}(f)$ and $\mathcal D_{\ell'}(f)$ can only have
  intersections at the boundary and the number of their common points is at most countable.
  Finally, either the whole $\real{\mathrm P}^2$ is covered by a single generalized stability zone or 
  the number of zones is countably infinite and the set $\cE(f)$ 
  is uncountable.
\end{theorem}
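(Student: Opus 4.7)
The plan is to build the stability zones of $f$ from those of its level surfaces $\varepsilon_c$, propagating Theorem~\ref{p1} together with the compatibility proposition for disjoint surfaces. By genericity we may assume $\varepsilon$ is Morse, so $\varepsilon_c$ is a smooth embedded $2$-surface $M^2_{g(c)}\hookrightarrow\bT^3$ for all but finitely many critical values $c$. For each regular $c$, Theorem~\ref{p1} yields disjoint closed stability zones $\zone_\ell(\varepsilon_c)\subset\RPt$ with piecewise smooth boundary. Since distinct regular level surfaces are disjoint in $\bT^3$, the compatibility proposition forces $\ell=\ell'$ whenever $B\in\zone_\ell(\varepsilon_c)\cap\zone_{\ell'}(\varepsilon_{c'})$ with $c\ne c'$; hence $\zone_\ell(f)=\bigcup_c\zone_\ell(\varepsilon_c)$ is well-defined and the soul map $\ell_f$ is unambiguous.

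To see that each $\zone_\ell(f)$ is a closed domain with piecewise smooth boundary, I would exploit the envelope functions $e_1(B)\le e_2(B)$ from the preceding theorem: for each interior $B$ the soul $\ell$ is assumed throughout the whole energy interval $[e_1(B),e_2(B)]$, so $\zone_\ell(f)$ can be packaged as the projection to $\RPt$ of a closed ``slab'' in $\RPt\times\varepsilon(\bT^3)$. Closedness then reduces to a compactness argument: given $B_n\to B_\infty$ with $B_n\in\zone_\ell(f)$ and $c_n\in[e_1(B_n),e_2(B_n)]$, extract a subsequence $c_n\to c_\infty$ and invoke the structural stability of integrable foliations under small perturbations (already exploited in Theorem~\ref{thm:rational}) to conclude $B_\infty\in\zone_\ell(\varepsilon_{c_\infty})$. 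The boundary of $\zone_\ell(f)$ then consists of directions at which a saddle connection appears in some $\varepsilon_c$ and the rank-$2$ integrable decomposition breaks down; in $\RPt$ this locus is a countable union of piecewise smooth arcs, one per critical bifurcation level. Disjointness of distinct zones away from their boundaries is immediate from compatibility, and two PWS planar domains meet along at most countably many points on their common boundary, giving the intersection bound.

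The main obstacle is the final dichotomy. My plan is the following. If a single $\zone_\ell(f)$ covers $\RPt$, there is nothing to prove. Otherwise, pick two distinct souls $\ell\ne\ell'$ and a boundary direction $B_\star$ separating their zones. By density of rational directions in $\RPt$, and hence of candidate souls in $\QPt$, one can find $B'\in\QPt$ arbitrarily close to $B_\star$; for generic $f$ the rational stability result (Theorem~\ref{thm:rational}) together with openness of integrability guarantees that each such $B'$ sits in some stability zone, with a soul that must differ from both $\ell$ and $\ell'$ when $B'$ is chosen close enough to $B_\star$. Iterating this insertion near every bifurcation arc produces a countably infinite family of zones. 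The chaotic locus $\cE(f)$ is the complement in $\RPt$ of the union of these closed zones together with the open set of trivial directions; since the zones have pairwise disjoint interiors accumulating at bifurcation loci of ever smaller scale, the complement contains a perfect set and a standard Cantor-type argument shows it is uncountable.

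The delicate point --- and the part I expect to cost the most work --- is justifying the insertion step: namely, that for a generic $f$ every rational direction close to a bifurcation point indeed supports a \emph{non-degenerate} integrable zone (with non-empty interior) and not merely a degenerate ``zone'' collapsed to a single direction. This is where genericity of $f$ must be used in an essential way, presumably by showing that the set of smooth functions failing this property is contained in a countable union of nowhere-dense (Baire-meager) subsets of $C^\infty(\bT^3)$.
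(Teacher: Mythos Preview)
The paper is a survey and does not supply a proof of this theorem: it is stated with attribution to Dynnikov (reference~\cite{Dyn99}) and the text immediately moves on to consequences and conjectures. There is therefore no proof in the paper to compare your proposal against.

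That said, a few remarks on your outline itself. The reduction of $\zone_\ell(f)$ to the union over $c$ of $\zone_\ell(\varepsilon_c)$ via the compatibility proposition is the right starting point, and the closedness argument via the envelope functions $e_1,e_2$ is plausible. However, your ``insertion step'' for the infinitude of zones has a real gap that you yourself flag but perhaps underestimate: given a rational $B'$ near a boundary direction $B_\star$ separating $\zone_\ell$ and $\zone_{\ell'}$, Zorich's theorem only tells you $B'$ lies in \emph{some} zone (or is trivial), not that its soul differs from both $\ell$ and $\ell'$. Nothing you have written excludes $B'\in\zone_\ell(f)$ or $B'\in\zone_{\ell'}(f)$. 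The actual mechanism in Dynnikov's argument is different: one shows that near a common boundary point of two distinct zones the functions $e_1,e_2$ must coincide, and that in any neighborhood of such a point there are directions not covered by either zone; iterating this produces the infinite family and the Cantor-like residual set $\cE(f)$. Your Baire-category suggestion for genericity is not the tool Dynnikov uses and would not by itself produce the required new souls.
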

It follows that the interior of $\mathcal D(f)$ is an open everywhere dense subset of
$\real\mathrm P^2$ and its complement $\overline{\mathcal E(f)}$
has the form of a two-dimensional cut out fractal set~\cite{DeL05a,DeL03b}.
Examples of such maps $\ell_f$ are shown in Fig.~\ref{fig:cos3D}. Note that several properties
about the structure of the stability zones are not well understood yet:
\begin{conjecture}[Dynnikov, 1999]
  \label{conj:size}
  The area of a stability region $\zone_{\ell}(M^2_g)$ does not exceed $C/\|\ell\|^3$ for some constant $C$
  that depends only on $M^2_g$. The sets $\mathcal D_{\ell}(f)$ are connected and simply connected.
\end{conjecture}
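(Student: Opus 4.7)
The plan is to attack the area estimate through an $\SL(3,\bZ)$-normalization of the soul direction and the topological statements via continuity of the soul under deformation of $B$. Given a primitive $\ell \in H_2(\bT^3,\bZ) \cong \bZ^3$, choose $A\in\SL(3,\bZ)$ with last row $\ell$, completing to a $\bZ^3$-basis by vectors $v_1,v_2$ chosen as short as possible by Minkowski reduction so that $\|v_1\|,\|v_2\| = O(\|\ell\|^{-1/2})$ (since $\det(v_1,v_2,\ell)=1$). The induced self-diffeomorphism $\phi_A$ of $\bT^3$ carries $M^2_g$ to a new surface $\tilde M^2_g$ with soul $e_3=(0,0,1)$, and pulls back $B$ to $A^T B$, producing a projective map $\Phi_A:[\tilde B]\mapsto[A^{-T}\tilde B]$ on $\RPt$ that identifies $\zone_{e_3}(\tilde M^2_g)$ with $\zone_\ell(M^2_g)$. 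Its Jacobian at $[\tilde B]$ (viewed on the unit sphere) is $1/|A^{-T}\tilde B|^3$, and since the third column of $A^{-T}$ is $\ell$, one has $|A^{-T}\tilde B|\geq|\tilde B_3|\,\|\ell\|-|\tilde B_1 v_1+\tilde B_2 v_2|\gtrsim\|\ell\|\cdot|\tilde B_3|$ whenever $|\tilde B_3|\gg\|\ell\|^{-3/2}$.

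Consequently
\[
\mathrm{Area}(\zone_\ell(M^2_g)) = \int_{\zone_{e_3}(\tilde M^2_g)} \frac{d\mathrm{Area}(\tilde B)}{|A^{-T}\tilde B|^3},
\]
and to deduce the $\|\ell\|^{-3}$ bound one must check that $\zone_{e_3}(\tilde M^2_g)$ is supported away from the singular circle $\{\tilde B_3=0\}$, so that $|\tilde B_3|$ is bounded below by an absolute constant on the zone. The geometric intuition is that when $\tilde B$ approaches $\{\tilde B_3=0\}$, planes perpendicular to $\tilde B$ become parallel to the rank-2 slabs carrying the soul $e_3$; the foliation must then degenerate, either into the trivial regime or through chaotic bifurcations. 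Making this rigorous, together with controlling the residual geometric factor uniformly over all choices of $A$, is the principal analytic difficulty: the pulled-back surface $\tilde M^2_g=\phi_A^{-1}(M^2_g)$ depends strongly on $\ell$, and one needs a uniform bound that depends on $M^2_g$ only through invariants that are stable under $\SL(3,\bZ)$.

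For the connectedness and simple connectedness of $\cD_\ell(f)$, the strategy is to use that the soul is locally constant on the interior of $\cD(f)$ and that transitions between distinct stability zones are codimension-one saddle-connection bifurcations by Dynnikov's analysis (Theorem~\ref{p3}). Given $B_1,B_2\in\cD_\ell(f)$, one joins them by a generic path and deforms the path across any excursion into the complement or into a neighboring $\zone_{\ell'}$, $\ell'\ne\ell$, by a small perturbation that preserves the current topological type of the foliation; this is possible because the boundary of $\zone_\ell$ is piecewise smooth (Theorems~\ref{p1} and~\ref{p3}) and locally avoidable outside isolated triple junctions. Simple connectedness would then follow by showing that any loop in $\cD_\ell(f)$ bounds a disk entirely inside $\zone_\ell$; the key point is to rule out ``holes'' filled by pieces of $\cE(f)$ or by zones $\zone_{\ell'}$ with $\ell'\ne\ell$.

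The principal obstacles are: (a) the uniformity question for the area bound, since $\tilde M^2_g$ may be geometrically very complicated for $\ell$ of large norm, so one probably needs an invariant reformulation bypassing the explicit normalization, possibly via techniques from the Teichmüller/Kontsevich--Zorich cocycle theory; and (b) the intricate fractal nature of $\cE(f)$ permitted by Theorem~\ref{p3}, which a priori could enclose nontrivial regions within a single stability zone and obstruct the deformation argument. Combinatorial and topological control of the local structure near the common boundary of $\zone_\ell$ and $\cE(f)$ is, in my view, the essential new ingredient required.
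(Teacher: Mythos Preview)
The statement you are attempting to prove is presented in the paper as a \emph{conjecture}, not a theorem. The paper contains no proof of it in general; the only verification given is for the special case of the \mucube\ (Theorem of De~Leo--Dynnikov, 2009), where the self-similar Levitt--Yoccoz structure of the SM permits explicit recursive estimates yielding $C/\|\ell\|^3 \leq a_\ell \leq D/\|\ell\|^3$. So there is no ``paper's own proof'' to compare against, and the difficulties you flag are precisely why the general statement remains open.

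Your outline also contains a concrete error in the normalization step: if $A\in\SL(3,\bZ)$ has rows $v_1,v_2,\ell$, then $v_1,v_2\in\bZ^3$ and hence $\|v_i\|\geq1$; the claim $\|v_i\|=O(\|\ell\|^{-1/2})$ is impossible. What Minkowski reduction actually gives is $\|v_1\|\,\|v_2\|\,\|\ell\|\asymp 1$ for a reduced basis of a lattice of covolume $1$, which forces at least one of $\|v_1\|,\|v_2\|$ to be bounded but says nothing useful about the product going to zero. More fundamentally, even with a corrected setup the core obstruction you identify is real and unresolved: the pulled-back surface $\tilde M^2_g=\phi_A^{-1}(M^2_g)$ becomes arbitrarily distorted as $\|\ell\|\to\infty$, and no $\SL(3,\bZ)$-invariant quantity of $M^2_g$ is currently known to control $\zone_{e_3}(\tilde M^2_g)$ uniformly. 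The connectedness argument likewise hinges on ruling out fractal ``holes'' of $\cE(f)$ inside a single $\zone_\ell$, which is exactly the unresolved structural question. Your proposal is thus a reasonable heuristic sketch of why the conjecture is plausible, but it does not close either gap, and the paper does not claim to either.
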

\noindent
\renewcommand\labelitemi{$\spadesuit$}
{\bf Open Questions:}
\begin{itemize}
\item Is the number of sides of a stability zone finite?
\item Are stability zones convex?
\item If not, can two stability zones meet in more than one point?
\item Does a generic surface $M^2_g$ have only finitely many stability zones?
\item Does any of these theorems still hold, possibly in a weaker form, if we consider,
  rather than constant 1-forms, closed 1-forms with zeros?
\item What can be said if we replace $\bT^3$ by some other compact 3-manifold (e.g. $M^2_g\times\bS^1$) 
  and $B$ with a closed 1-form without zeros?
\end{itemize}
%

%
The following theorem connects $\mathcal E(f)$ with the image of the stereographic map $\ell_f$~\cite{DeL03b}:
\begin{theorem}[De Leo, 2003]
  \label{p4}
  If there is more than one generalized stability zone, then the closure of $\mathcal E(f)$ coincides with
  the set of accumulation points of the set $\ell_f(\cD(f))$.
\end{theorem}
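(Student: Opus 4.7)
The plan is to prove both inclusions in the identity $\overline{\cE(f)}=A$, where $A\subset\RPt$ denotes the set of accumulation points of $\ell_f(\cD(f))$. The starting point is the observation made in the paragraph preceding the statement: from Theorem~\ref{p3} one has $\overline{\cE(f)}=\RPt\setminus\text{int}\,\cD(f)$, so $B_0\in\overline{\cE(f)}$ is equivalent to every neighborhood of $B_0$ failing to sit inside a single stability zone. The second, more delicate ingredient---the backbone of the argument---is a geometric bridge: each soul $\ell\in\QPt$, viewed as a point of $\RPt$, belongs to its own zone $\cD_\ell$. This bridge lets one translate ``souls converge to $B_0$ in $\RPt$'' into ``the zones $\cD_{\ell_n}$ accumulate near $B_0$''. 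It follows from Theorem~\ref{thm:rational}: for the rational direction $B=\ell$ the foliation on a generic FS is trivial or integrable, the integrable case pins the soul at $\ell$ for at least one level $c$, and the integrable region around $\ell$ persists under small perturbations of $B$, so by closedness of the zone $\ell\in\cD_\ell$.

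For the inclusion $A\subseteq\overline{\cE(f)}$, choose distinct souls $\ell_n\to B_0$. By the bridge $\ell_n\in\cD_{\ell_n}$, so infinitely many distinct closed zones meet every neighborhood of $B_0$. Suppose, toward a contradiction, that $B_0\in\text{int}\,\cD(f)$ and pick a relatively compact neighborhood $\overline U\subset\cD(f)$. Since the zones' interiors are pairwise disjoint (Theorem~\ref{p3}) and infinitely many of them meet $U$, their boundaries must accumulate at some $B^*\in\overline U$ that belongs to no single zone's interior; hence $B^*\in\overline{\cE(f)}$, contradicting $\overline U\subset\text{int}\,\cD(f)=\RPt\setminus\overline{\cE(f)}$. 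Thus $B_0\notin\text{int}\,\cD(f)$, i.e., $B_0\in\overline{\cE(f)}$.

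For the reverse inclusion $\overline{\cE(f)}\subseteq A$, let $B_0\in\overline{\cE(f)}$ and let $U$ be any neighborhood of $B_0$. By the identity $U\not\subset\text{int}\,\cD(f)$. Since the set of trivial directions is open (Theorem~\ref{p1}) and disjoint from $\overline{\cE(f)}$, $U$ cannot be covered by a trivial open piece together with finitely many closed zones without forcing $B_0$ either into the interior of some zone or into the trivial region---both contradicting $B_0\in\overline{\cE(f)}$. Therefore $U$ must meet infinitely many distinct zones $\cD_{\ell_n}$; applying the bridge, the souls $\ell_n$ themselves lie in $\cD_{\ell_n}$ and hence arbitrarily close to $U$, so $B_0$ is an accumulation point of $\ell_f(\cD(f))$.

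The hard part is establishing the geometric bridge $\ell\in\cD_\ell$: verifying that a rational direction is always contained in its own stability zone requires a careful perturbative analysis of $B$-sections near $B=\ell$, built on the structural results of Zorich and Dynnikov. A secondary technical issue is the accumulation argument for infinitely many closed zones in a relatively compact region, which rests on the piecewise smoothness of their boundaries guaranteed by Theorem~\ref{p3}.
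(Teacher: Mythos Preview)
The paper does not contain a proof of this theorem (it is merely cited), so there is no benchmark argument to compare against. Evaluated on its own terms, your plan rests entirely on the ``bridge'' $\ell\in\cD_\ell$, and the justification you give for it is incorrect.

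You argue that for the rational direction $B=\ell$, Zorich's theorem makes the foliation trivial or integrable, and that ``the integrable case pins the soul at $\ell$''. Zorich's theorem says nothing of the sort: it gives no relation between the direction $B$ and the value of the soul. In fact the opposite constraint is in force. If $N$ is a rank-$2$ genus-$1$ component with $[N]=\ell$, then in the universal cover $\tilde N$ is a bounded deformation of the plane orthogonal to $\ell$, and the $B=\ell$ sections of $\tilde N$ are the level sets of the (bounded, periodic) $\ell$-coordinate on $\tilde N$; generically these are compact, hence null-homotopic in $\bT^3$, and $N$ is absorbed into the cylinders $C_i$ rather than surviving as a $\cK_j$. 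Concretely, for $\fc_0$ and $B=(0,0,1)$ every regular section is a single contractible loop in the $z=c$ torus, so $\cF_{(0,0,1)}(\fc_0)$ is trivial, and $(0,0,1)$ does not lie in any $\cD_\ell(\fc_0)$ --- in particular not in $\cD_{(0,0,1)}$. Whether $\ell\in\cD_\ell(f)$ is nonetheless true in general requires a genuinely different argument (e.g.\ via the combinatorics of how adjacent zones meet and how their souls are related), not an appeal to Theorem~\ref{thm:rational}.

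There is a second gap. In your inclusion $\overline{\cE(f)}\subseteq A$ you deduce that infinitely many distinct zones $\cD_{\ell_n}$ meet a small neighbourhood $U$ of $B_0$, invoke the bridge to place $\ell_n\in\cD_{\ell_n}$, and conclude ``hence arbitrarily close to $U$''. That does not follow: $\cD_{\ell_n}\cap U\neq\emptyset$ together with $\ell_n\in\cD_{\ell_n}$ gives no control on the distance from $\ell_n$ to $U$ unless you also know the \emph{diameters} of the $\cD_{\ell_n}$ shrink. Disjointness of interiors only forces their \emph{areas} to shrink; without convexity or a shape bound (both listed as open in the paper) small area does not imply small diameter. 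A correct argument needs a quantitative link between the location of $\cD_\ell$ and the point $\ell$ --- for instance that $\cD_\ell$ lies in a ball around $\ell$ of radius controlled by $\|\ell\|^{-1}$ --- which is considerably stronger than the bare membership $\ell\in\cD_\ell$.
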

It is plausible, but still unknown, that $\mathcal E(f)$ has always zero measure.
The following stronger conjecture was proposed in~\cite{MN03}:
\begin{conjecture}[Novikov, 2003]
  \label{conj:zm}
  Whenever $\cE(f)$ is non-empty, the Hausdorff dimension~\cite{Fal90} of $\cE(f)$
  is strictly between 1 and 2 for every $f$.
\end{conjecture}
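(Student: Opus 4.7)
My plan is to recast the conjecture as a statement about Rauzy--Veech--Zorich renormalization. Restricting $f$ to a level surface of topological rank $3$ and taking a circle transversal to $\cF_B(\varepsilon_c)$ yields a (generalized) interval exchange transformation (IET) depending on $B$. In this dictionary, $B\in\zone_\ell(f)$ iff the Zorich induction of the IET eventually enters a reducible stratum that determines the soul $\ell$, whereas $B\in\cE(f)$ iff the induction orbit stays forever in the full irreducible stratum. The two bounds on $\dim_H\cE(f)$ are then extracted from the Lyapunov spectrum and combinatorics of this induction.

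\medskip
\noindent\textbf{Upper bound $\dim_H\cE(f)<2$.} Granting Conjecture~\ref{conj:size}, bundle the stability zones by the height of their souls and use the piecewise-smooth boundaries supplied by Theorem~\ref{p3} to cover $\RPt\setminus\bigcup_{\|\ell\|\leq H}\zone_\ell$ by boxes of side $\asymp 1/H$. A renormalization-invariance argument, in which exit into a reducible stratum at combinatorial depth $n$ eats a definite fraction of the remaining area, shows that the residual area decays exponentially in $n$ at a rate controlled by the top Lyapunov exponent of the Zorich cocycle. Balancing the number of boxes against the residual area yields a Minkowski---and hence Hausdorff---dimension strictly less than $2$.

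\medskip
\noindent\textbf{Lower bound $\dim_H\cE(f)>1$.} By Theorem~\ref{p4}, every chaotic direction is an accumulation point of the rational souls $\ell_f(\cD(f))$; this is the $\cE(f)$-analogue of the density of irrationals. I would build a Cantor subset $K\subset\cE(f)$ by a Moran scheme on the induction tree: at depth $n$, each ``admissible'' window of diameter $r_n$ contains at least $k_n\geq 2$ disjoint admissible sub-windows of diameter $r_{n+1}$, each one encoding one further Zorich step of irreducible type. Provided the Zorich cocycle has simple Lyapunov spectrum (known in many cases after Avila--Viana), the ratios $r_n/r_{n+1}$ and the branching numbers $k_n$ can be estimated through the two largest exponents $\theta_1>\theta_2$, and a standard Moran formula forces $\dim_H K\geq 1+\delta$ for an explicit $\delta=\delta(\theta_1,\theta_2)>0$.

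\medskip
\noindent\textbf{Principal obstacle.} Both bounds rest on quantitative hyperbolicity of the renormalization cocycle, which is so far established only in specific symmetric examples (the cubic surface studied by Dynnikov--Hubert--Skripchenko and Avila--Hubert--Skripchenko). The genuine difficulty is the ``for every $f$'' quantifier: an exceptional $f$ might yield a cocycle with coinciding top exponents, in which case the Moran scheme underlying the lower bound collapses to dimension exactly $1$, or with vanishing top exponent, in which case the exponential area decay used for the upper bound breaks down. Ruling out such degenerations---perhaps by showing that the mere non-emptiness of $\cE(f)$ already forces the required hyperbolicity of the associated cocycle, or by a generic-inductive argument on the combinatorial type of $(\varepsilon,B)$---appears to me to be the heart of Novikov's conjecture, and the step where new ideas beyond the current IET/renormalization toolbox will be needed.
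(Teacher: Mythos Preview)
The statement you are attempting to prove is labeled \emph{Conjecture} in the paper, and it is genuinely open: the paper offers no proof, only numerical evidence (box-dimension estimates of roughly $1.7$--$1.8$ for several explicit surfaces) and the partial result of Avila--Hubert--Skripchenko that $\dim_H\cE(\mucube)<2$ for the single example of the \mucube. There is therefore no ``paper's own proof'' to compare your proposal against.

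Your outline is a reasonable research programme, and in fact the upper-bound half tracks closely the strategy that Avila--Hubert--Skripchenko carried out for the \mucube\ via Rauzy-type renormalization. But you should be aware that what you wrote is not a proof even in sketch form: it is a plan with a correctly identified obstruction. Two points deserve emphasis. First, your upper bound explicitly assumes Conjecture~\ref{conj:size} (the $C/\|\ell\|^3$ area bound on stability zones), which is itself open in general; invoking one conjecture to prove another is not progress unless you can establish the first. Second, your own ``Principal obstacle'' paragraph already concedes that the uniform-in-$f$ hyperbolicity you need is unavailable, and that without it both bounds collapse. That is an accurate diagnosis, and it is precisely why the statement remains a conjecture rather than a theorem.
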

Another important but still not completely understood problem is the structure of chaotic level sets.
In~\cite{Dyn99} Dynnikov presented an abstracted sophisticated construction that showed the
existence of such sections whose closure is contained in a minimal component of topological rank larger
than 2.  In~\cite{DD07}, in case of a polyhedral surface (see next section), Dynnikov and the present author
provided for the first time a concrete case of a chaotic section on a simple surface. Shortly afterwards,
Dynnikov started using the Interval Exchange theory tools to study chaotic sections,
provided new examples with similar but more generic surfaces and proposed the following conjecture~\cite{Dyn08}:
\begin{conjecture}[Dynnikov, 2008]
  If $\cF_B(M^2_g)$ is chaotic, almost all the $B$-sections of $M^2_g$ consist in a single connected curve.
\end{conjecture}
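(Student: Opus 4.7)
The plan is to reduce the conjecture to a recurrence statement for an interval exchange transformation (IET) on a transversal, and then to invoke generic theory of the Teichmüller and Kontsevich--Zorich cocycles. Assume $\cF_B(M^2_g)$ is chaotic, and let $\cM\subset M^2_g$ be a minimal component with $g_0:=\mathrm{genus}(\cM)\geq 2$. By Dynnikov's theorem on rank, the topological rank of $\cM$ is $3$ (otherwise $\cF_B$ would be integrable). Periodic components of $\varepsilon_c$ only contribute compact closed curves to any $B$-section, so the content of the conjecture concerns $\tilde\cM:=\pi^{-1}(\cM)\subset\bR^3$, and I will show that for almost every $c\in\bR$ the set $\tilde\cM\cap P_c$ is connected, where $P_c$ is the plane $S=c$ perpendicular to $B$.

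First I would choose a smooth closed transversal $\tau\subset\cM$ intersecting every leaf of $\cF_B|_\cM$, so that the first-return map $T_\tau:I\to I$ is an IET on an interval. Because $\cM$ has topological rank $3$, the sublattice $\Lambda_P=\bZ^3\cap P_c$ has rank $2$, and the countable set $\pi^{-1}(\tau)\cap P_c$ is naturally indexed by $\bZ^3/\Lambda_P\simeq\bZ$. Two such points lie in the same connected component of $\tilde\cM\cap P_c$ iff the corresponding $T_\tau$-orbit segments can be joined on $\cM$ by a concatenation whose total lift in $\bR^3$ closes up modulo $\Lambda_P$. This organizes into a $\bZ$-valued integer cocycle $\varphi:I\to\bZ$ over $T_\tau$ recording the ``vertical'' (i.e.\ transverse-to-$P$) part of the $\bZ^3$-monodromy between successive returns. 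The number of non-compact components of $\tilde\cM\cap P_c$ equals the number of ergodic components of the skew-product $(T_\tau,\varphi)$ on $I\times\bZ$, so connectedness for a.e.\ $c$ amounts to ergodicity of this skew-product, or equivalently to the cocycle $\varphi$ being recurrent with full range $\bZ$ along typical orbits.

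Second, I would place the IET $T_\tau$ inside the appropriate Rauzy class for a translation surface of genus $g_0$ and identify $\varphi$ with a one-dimensional piece of the Kontsevich--Zorich cocycle along a distinguished integer class $\ell^*\in H^1(\tilde\cM/\Lambda_P,\bZ)$. By Masur--Veech, almost every IET in the ambient Rauzy class is uniquely ergodic; by Forni and Avila--Viana the Lyapunov spectrum of the KZ cocycle is simple. If the exponent of $\varphi$ is zero, the Conze--Fraczek/Fraczek--Ulcigrai version of Atkinson's theorem gives recurrence with full range; if it is nonzero, one must instead use a parabolic renormalization argument in the style of Dynnikov--Skripchenko to show the skew-product has no non-trivial invariant subset. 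Either way, on a full-measure subset of the parameter space of IETs, the skew-product is ergodic, and translating back via the canonical parametrization $c\mapsto T_{\tau,c}$ of a transversal arc through the level foliation, this yields connectedness of $\tilde\cM\cap P_c$ for a.e.\ $c\in\bR$.

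The main obstacle is the middle step: precisely identifying the cocycle $\varphi$ as a KZ-sub-cocycle and verifying the non-degeneracy hypothesis that singles it out on the orbit closure of the IETs arising from rank-$3$ embeddings $M^2_g\emb\bT^3$. These IETs form a thin, non-$\SL(2,\bR)$-invariant subvariety of the relevant stratum, cut out by the condition that three independent $B$-periods survive, so the standard Masur--Veech--Avila genericity is not directly applicable and a tailored analysis of the Rauzy--Veech renormalization on the Novikov--Dynnikov family is needed. This is presumably the reason the conjecture, though very plausible from all known numerical and combinatorial evidence, remains open.
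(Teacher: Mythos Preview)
The statement you are trying to prove is presented in the paper as a \emph{conjecture}, not a theorem: the paper gives no proof, so there is nothing to compare your argument against. More importantly, the very next paragraph of the survey records that Dynnikov and Skripchenko (2015) constructed explicit pairs $(M^2_g,B)$ with $\cF_B(M^2_g)$ chaotic for which \emph{every} $B$-section carries infinitely many connected components, and that the corresponding foliation on $M^2_g$ is not uniquely ergodic. This means the conjecture, read literally as ``for every chaotic $(M^2_g,B)$, almost every planar section is a single curve'', is false, and no argument along your lines can succeed without adding hypotheses.

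Your outline is a reasonable heuristic translation of the problem into the language of skew products over IETs, but the gap you flag in your last paragraph is not a technicality --- it is exactly where the argument breaks. The IETs arising from rank-$3$ embeddings form, as you say, a thin non-$\SL(2,\bR)$-invariant locus, and on that locus unique ergodicity of $T_\tau$ can genuinely fail; the Dynnikov--Skripchenko examples live precisely there. Once $T_\tau$ has several ergodic measures, your identification ``number of non-compact components $=$ number of ergodic components of the skew product'' already produces more than one component, and the Atkinson/Conze--Fr\k{a}czek recurrence step does not apply. So the obstacle you isolate is not merely ``presumably the reason the conjecture remains open''; it is the reason the conjecture, as stated, is now known to admit counterexamples. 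A corrected statement would have to restrict to almost every $B$ in $\cE(M^2_g)$ (for a suitable measure), or to uniquely ergodic chaotic foliations, before an IET/KZ approach has a chance.
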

In 2013 A. Skripchenko~\cite{Skr13} supported the conjecture by building an example of surface
for which there is at least a direction on which this is true. More recently, though, Dynnikov and
Skripchenko~\cite{DS15} constructed examples of surfaces $M^2_g$ such that $\cF_B(M^2_g)$ is chaotic
but on each $B$-section of $M^2_g$ lie infinitely many components. Moreover, in this case, typical
connected components do have an asymptotic direction, showing in particular that the corresponding
foliation on $M^2_g$ is not uniquely ergodic. 

The most recent result on this subject is by A. Avila, P. Hubert and S. Skripchenko, that studied
the case of the \mucube\ (namely the regular skew apeirohedron $\{4,6|4\}$, see~\cite{Cox37} and
next section) introduced by Dynnikov and the present author in~\cite{DD07}, and proved
the following important particular result~\cite{AHS16}:
%
\begin{theorem}[Avila, Hubert, Skripchenko, 2016]
  For almost all $B$ with $\cF_B(\mucube)$ chaotic, every $B$-section has an asymptotic
  direction with a diffusion rate strictly between $1/2$ and $1$. 
\end{theorem}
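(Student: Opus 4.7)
The plan is to apply the by-now standard translation-surface/renormalization machinery, specialized to the combinatorial structure of the \mucube. First I would code the foliation $\cF_B(\mucube)$ by a first-return map on a carefully chosen finite union of transverse segments on the quotient surface $\mucube/\bZ^3\subset\bT^3$. Because $\{4,6|4\}$ is a regular skew polyhedron built from squares with a single transitive symmetry type, this return map can be taken to be a system of (three) isometries of intervals in the sense of Dynnikov, depending affinely on $B\in\RPt$. The condition that $\cF_B(\mucube)$ be chaotic corresponds, in this coding, to $B$ lying in a definite invariant subset of parameter space on which a suitable Rauzy--Veech--Zorich type induction recurs infinitely often.

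Second, I would iterate that induction. It produces an invertible skew product over a subshift on a finite alphabet, together with an integer cocycle $A(B,n)$ that records the $\bZ^3$-displacement, in the universal cover, accumulated by a piece of $B$-section between consecutive returns. Once one verifies the standard hypotheses for the \mucube\ (aperiodicity plus an integrability/recurrence estimate on the return times), Oseledets's theorem gives Lyapunov exponents $\theta_1>\theta_2\geq\dots$. The translation-to-deviation dictionary then identifies $\theta_1$ with the drift rate of a $B$-section in $\bR^3$ along some well-defined asymptotic direction, and the transverse deviations over a combinatorial length $L$ with $L^{\theta_2/\theta_1}$; in particular the diffusion rate equals $\theta_2/\theta_1$, and the existence of the asymptotic direction itself reduces to simplicity of the top exponent.

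Third, I would establish the two strict inequalities. The bound $\theta_2/\theta_1<1$ is the simplicity of $\theta_1$: one checks that the monoid generated by the induction matrices over the chaotic stratum contains a hyperbolic element and acts irreducibly, at which point the Avila--Viana criterion applies and yields strict simplicity. The bound $\theta_2/\theta_1>1/2$ is a Forni-type positivity statement for the induced cocycle on a distinguished two-dimensional invariant sub-bundle corresponding to the $\bZ^3$-displacement; I would try to obtain it by exhibiting an explicit loop in the finite induction graph whose product matrix acts with two positive exponents on that sub-bundle, and then propagate the estimate to almost every $B$ by ergodicity of the renormalization on the chaotic stratum.

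The hardest step is plainly the last one. Unlike for genuine translation surfaces, the \mucube\ yields a system of isometries whose renormalization cocycle is not symplectic in the classical sense, so Forni's theorem cannot be invoked directly; one must either pass to an auxiliary double cover that restores a symplectic structure (an analogue of the Zorich cover for square-tiled surfaces) or perform a direct combinatorial argument on the finitely many matrices generated by the induction to rule out $\theta_2\leq\theta_1/2$. An additional subtlety is that the statement is quantified over \emph{every} $B$-section for the given generic $B$, not merely a typical one; this promotion uses unique ergodicity of the interval system at such a $B$, which itself must be extracted from the same renormalization.
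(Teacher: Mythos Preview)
The paper you are working from is a survey: it states this theorem as a result of Avila, Hubert and Skripchenko, cites \cite{AHS16} for it, and moves on without giving any proof or proof sketch. There is therefore no argument in the paper to compare your proposal against.

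That said, your outline is in the right general framework---coding the foliation by a system of interval isometries, running a Rauzy--Veech type renormalization over the parameter simplex (here the Rauzy gasket), and reading the diffusion exponent off the Lyapunov spectrum of the resulting cocycle is indeed the architecture of \cite{AHS16}. Two places where your sketch is thinner than it looks. First, ``almost all $B$ with $\cF_B(\mucube)$ chaotic'' refers to a measure class on a Lebesgue-null set; identifying the correct measure on the chaotic locus and establishing ergodicity of the renormalization with respect to it is a substantial part of the work, not a side remark. Second, your mechanism for the strict lower bound $1/2$ is off: Forni-type positivity arguments yield strict positivity of a second exponent, not a specific numerical threshold, and exhibiting a single loop with two positive exponents does not propagate to an inequality of the form $\theta_2>\theta_1/2$ almost everywhere. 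In the actual paper the bound $1/2$ comes from a more specific analysis of the three-dimensional cocycle over the fully subtractive algorithm, and this is where the genuine difficulty lies; your sketch does not yet contain the idea that produces that number.
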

\subsubsection{Numerical Analysis}
\label{sec:num}
No algorithm is known to generate analytically, in general, the stability zones relative to a given surface or function.
Given a direction $B$ inside a stability zone $\cD_{\ell}(M^2_g)$, in principle the analytical boundaries of $\cD_{\ell}(M^2_g)$
can be found by looking at the cylinders $C_i$ of closed orbits separating the pairs of genus-1 rank components.

Let us discuss in some detail the simplest non-trivial case, namely when $M^2_g$ is a surface of genus 3.
Note that, numerically, we can work only with directions in $\QPt$ but this is not a significant restriction
for this problem because $\QPt$ is dense in $\cD(f)$ for every $f$. For practical reasons, we choose to represent 
a the direction $B$ one of the two integer indivisible vectors in its equivalence class.
Since $B$ is rational, all $B$-sections will be compact, some homologous to zero  in $\bT^3$ and some not. 
In the integrable case, there will be exactly two genus-1 components $\cK_{1,2}$ filled by $B$-sections non homologous
to zero and they will be separated by exactly two cylinders $C_{1,2}$ of sections homologous to zero. The heights
of the two cylinders is non-zero in some open topological discs $D_{1,2}$ containing $B$ and the corresponding
stability zone is given by $\cD=D_1\cap D_2$. The zone $\cD$ is subdivided in open sets $E_j$ corresponding to
different pairs of critical points at the bases of the cylinders. The equation of the side of each $E_j$ is given
by $\langle B,p_2-p_1\rangle=0$, where $p_{1,2}$ are the critical points at the bases of the cylinder that
determines the breaking of the $P_{1,2}$ (see Fig.~\ref{fig:zone} for an example).
\begin{figure}
  \centering
  \includegraphics[width=13cm]{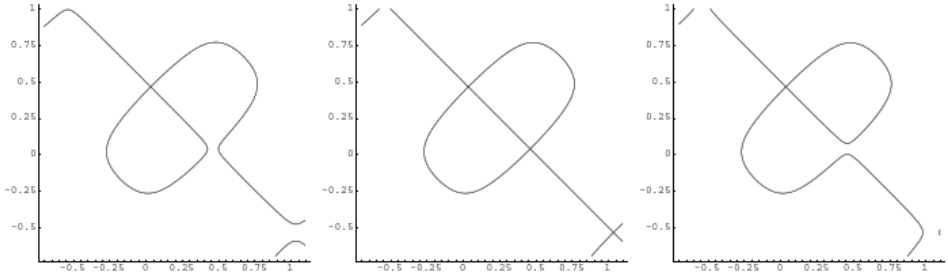}
  \caption{%
    \footnotesize
    A change of cylinder inside a stability zone of $\cD(\fc)$. On the left is shown a critical leaf at the base of a cylinder, the
    critical point is $p_1\simeq(0.035,0.463,0.25)$. At the opposite base lies the critical point $p=p_4+(0,0,1)$. The middle
    picture shows what happens at the boundary between the two stability zones of cylinders, namely the point $p_1$ has a
    saddle connection with $p_2$. The picture on the right shows the base of the new cylinder. At one base still lies
    the point $p_1$ but at the opposite one now lies $p_4+(1,1,0)$.
  }
\label{fig:zone}
\end{figure}
%

Using this procedure, Dynnikov~\cite{Dyn96} started the numerical study of this problem by finding ten stability zones
of the stereographic map of one of the simplest trigonometric polynomials with level sets of topological rank equal to 3,
the function
$$
\fc(x,y,z)=\cos(2\pi x)+\cos(2\pi y)+\cos(2\pi z)\,,
$$
whose regular level sets $\fc_c=\fc^{-1}(c)$ are either spheres (for $c<-1$ and $c>1$) or genus-3 surfaces (for $-1<c<1$) .
Note that the surface $\fc_c$ is a translate of $\fc_{-c}$ and therefore, in this case, $e_1(B)=-e_2(B)$ for all $B\in\RPt$,
so that $\cD(\fc)$, in principle equal to the union of all the $\cD(\fc_c)$, in this case is simply equal to $\cD(\fc_0)$. 
Unfortunately, though, the procedure above is hard to implement in a computer language.

In order to produce a reasonably good approximation of a SM $\ell_{M^2_g}$ we decided, therefore, to rather
select a lattice of rational directions covering homogeneously $\RPt$ or some proper subset of it
(in our computations so far we used a number of points ranging from $10^4$ to $10^8$) and
evaluating the soul (when defined) for each of such directions. In concrete, for each $B$ on the lattice we evaluate numerically
the four critical sections, namely the $B$-sections through the points where the plane perpendicular to $B$
is tangent to $M^2_g$. When $\cF_B(M^2_g)$ is integrable, all critical points at the bases of the two cylinders are
homoclinic saddles with one critical loop homologous to zero and all four such loops are homologous in $M^2_g$
to the same $\gamma\in H_1(M^2_g,\bZ)$. Then we evaluate the homology class $\gamma$ and 
the rank-2 subring $Ann(\gamma)\subset H_1(M^2_g,\bZ)$ of all (cohomology classes of) loops having zero
intersection numbers with $\gamma$. The image of the annihilator of $\gamma$, $Ann(\gamma)$, into
$H_1(\bT^3,\bZ)$ is a rank-2 subring $R$ that, in turn, determines a rank-1 subring $L\subset H_2(\bT^3,\bZ)$
as the set of all 2-cycles whose intersection number with all elements of $R$ is zero. The soul of $\cF_B(M^2_g)$
is, modulo sign, any of the two indivisible elements of $L$. 
%
\begin{figure}
  \centering
  \includegraphics[width=5.4cm]{cos3D}\hskip.76cm\includegraphics[width=5.4cm]{cos3Dm}\\
  \vspace{.3cm}
  \includegraphics[width=5.4cm]{cos3Dfractal}\hskip.76cm\includegraphics[width=5.4cm]{cos3DmFractal}\\
  \vspace{.3cm}
  \includegraphics[width=5.4cm]{cos3DFractalDisc}\hskip.76cm\includegraphics[width=5.4cm]{cos3DmFractalDisc}\\
  \caption{%
    \footnotesize
    (left) From top to bottom: the surface $\fc_0$, the SM $\cD(\fc)$ in the square $[0,1]^2$ of the chart $B_z=1$ and in the whole $\RPt$.
    (right) From top to bottom: the surface $\fd_0$, the SM $\cD(\fd)$ in the square $[0,1]^2$ of the chart $B_z=1$ and in the whole $\RPt$.
  }
\label{fig:cos3D}
\end{figure}

We implemented this algorithm in NTC~\cite{DeL03a} (nts.sf.net), an open source C++ library we built on top of the well-known
3D computer graphics, image processing, and visualization open source C++ library VTK (www.vtk.org)
by W. Schroeder, K. Martin and B. Lorense~\cite{SML06}. In order to check its reliability, we successfully tested NTC's results
on the SM of eight surfaces, the zero level of the function $\fc$ above and several level sets of a piecewise quadratic function --
all these surfaces are genus-3, are embedded with topological rank 3 and are invariant by permutations of the coordinate
axes. Their symmetry is cubic, namely their Brillouin zone is the unit cube. 
In Fig.~\ref{fig:cos3D} (left) we show $\fc_0$, the whole SM $\cD(\fc)$ and a detail of it in the region $[0,1]^2$ in the chart $B_z=1$.
A rough numerical evaluation of its box dimension gives an estimate of about 1.83, in agreement with Novikov's Conjecture~\ref{conj:zm}.

We later generalized NTC to genus-4 surfaces and used it to evaluate the SM of FS of noble metals (see next section).
Recently, in order to see which kind of SM arise from functions of higher genus and less symmetry, we started the study of
the SM of the function
$$
\fd(x,y,z)=\cos(2\pi  x)\cos(2\pi  y)+\cos(2\pi  y)\cos(2\pi  z)+\cos(2\pi  z)\cos(2\pi  x)\,,
$$
whose regular level sets $\fd_c$ are either spheres (for $c<-1$ and $c>0$) or genus-4 surfaces (for $-1<c<0$).
Each of the genus-4 level sets has topological rank 4. Note also that $\fd$, besides being invariant by integer translations
along the coordinate axes, is invariant with respect to translations by $1/2$ along the cube diagonals, namely it
has a bcc invariance.
In Fig.~\ref{fig:cos3D} (right) we show the whole SM $\cD(\fd)$ in its first Brillouin zone, the SM $\cD(\fd)$ and
a detail of it in the region $[0,1]^2$ in the chart $B_z=1$. A rough numerical evaluation of its box dimension of about 1.69,
again in agreement with Novikov's Conjecture~\ref{conj:zm}.

In order to have a look at simpler SM of functions and, at the same time, to decrease substantially the time to generate, in general,
SM of surfaces, we generalized in~\cite{DeL06} the main results of Zorich and Dynnikov to polyhedral surfaces. The most noticeable
difference with the smooth case is that in the polyhedral case saddle points with index smaller than -1 (e.g. monkey saddles) are generic. 
In Fig.~\ref{fig:cos3D} we show the results for three noteworthy ones: two of the three regular skew apeirohedra (see~\cite{Cox37}),
the muoctahedron (with Schl\"afli symbol $\{6,4|4\}$) and the \mucube\ (with Schl\"afli symbol $\{4,6|4\}$), and the
cubic polyhedra $P_1$, namely the only cubic polyhedron with all {\em screw} vertices, similarly to the \mucube\ being
the only cubic polyhedron with all monkey-saddle vertices (see~\cite{GSS02}). 
Each one of these surfaces is triply periodic and enjoys the following property: their interior is identical, modulo translations,
to their exterior, so that their open $B$-sections arise for them for every $B$ and therefore their SM coincides with the SM of some
function. 
A rough numerical evaluation of their box dimension gives an estimate of 1.69, 1.72 and 1.68, in agreement with Novikov's Conjecture~\ref{conj:zm}.
\begin{figure}
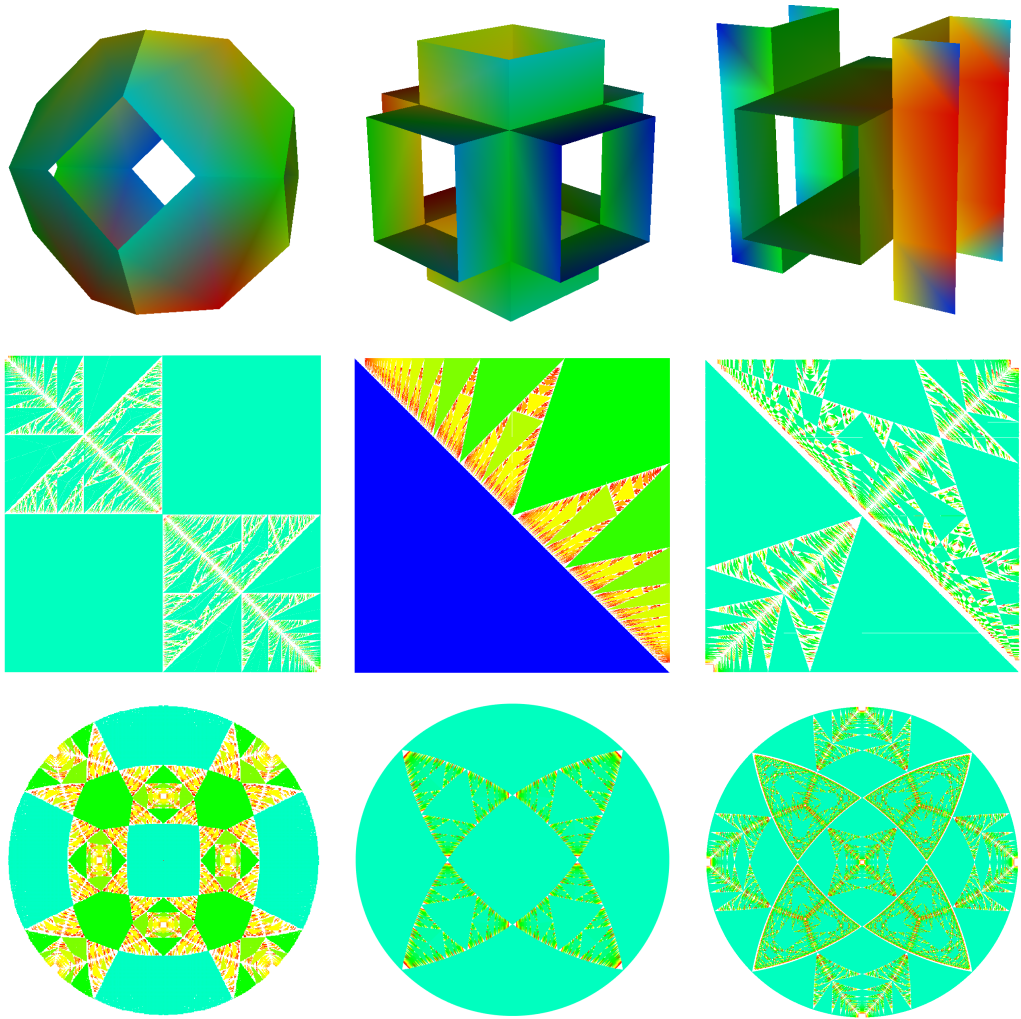

  \centering
  \includegraphics[width=4.25cm]{muoctahedron}\hskip.40cm\includegraphics[width=4.25cm]{mucube}\hskip.40cm\includegraphics[width=4.25cm]{p1cube}\\
  \vspace{.3cm}
  \includegraphics[width=4.25cm]{muoctahedronFractal}\hskip.40cm\includegraphics[width=4.25cm]{mucubeFractal}\hskip.40cm\includegraphics[width=4.25cm]{p1Fractal}\\
  \vspace{.3cm}
  \includegraphics[width=4.25cm]{muoctahedronFractalDisc}\hskip.40cm\includegraphics[width=4.25cm]{mucubeFractalDisc}\hskip.40cm\includegraphics[width=4.25cm]{p1FractalDisc}\\
  \caption{%
    \small
    (left) From top to bottom: the $\mu$-octahedron, its SM $\cD(\mu\hbox{-octahedron})$ in the square $[0,1]^2$ of the chart $B_z=1$ and in the whole $\RPt$.
    (center) From top to bottom: the $\mu$-cube, its SM $\cD(\mucube)$ in the square $[0,1]^2$ of the chart $B_z=1$ and in the whole $\RPt$.
    (right) From top to bottom: the $P_1$ polyhedron, its SM $\cD(P_1)$ in the square $[0,1]^2$ of the chart $B_z=1$ and in the whole $\RPt$.
  }
\label{fig:pol}
\end{figure}

The strategy of looking for the most possible elementary cases proved to be successful and it produced a breakthrough. In case
of the \mucube, indeed, we were able to find an explicit algorithm producing the non-trivial part of $\cD(\mucube)$ and, in
a joint work with Dynnikov~\cite{DD09}, we were able to prove several fundamental properties about the structure of $\cD(\mucube)$:
\begin{theorem}[De Leo, Dynnikov, 2009]
  The following properties hold for the $\mucube$:
  \begin{enumerate}
  \item The largest SZ of its SM are the (closed) square $Q_z$ with sides $[0:1:1]$, $[1:0:1]$, $[0:-1:1]$ and $[-1:0:1]$
    and the two other ones $Q_x$ and $Q_y$ obtained, respectively, by switching the $z$ axis with the $x$ axis and with the $y$ axis.
    The set $\RPt\setminus(Q_x\cup Q_y\cup Q_z)$ is the disjoint union of four open triangles $T_i$ and the restriction
    of $\cD(\mucube)$ to each $T_i$ is a Levitt-Yoccoz gasket.
  \item $\cE(\mucube)$ has measure zero.
  \item there exist four constants $A$, $B$, $C$, $D$ such that
    $$                                                                                                                                                                                                                                   
    \frac{A}{\|\ell\|^{\frac{3}{2}}}\leqslant p_\ell                                                                                                                                                                                     
    \leqslant\frac{B}{\|\ell\|}\;,\;\;\frac{C}{\|\ell\|^3}\leqslant a_\ell \leqslant\frac{D}{\|\ell\|^3}                                                                                                                                 
    $$
    for all souls of $\cD(\mucube)$.
  \item Let $\ell_n$ any recursive ordering of the souls of $\cD(\mucube)$. Then
    $$
    2\log_3^2(1+2n)+1\leqslant \|\ell_n\|^2\leqslant3(1+2n)^{2\log_3\alpha}\,,
    $$
    where $\alpha$ is the Tribonacci constant.
    \end{enumerate}
  \end{theorem}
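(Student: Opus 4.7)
The heart of the proof is a reduction of the $B$-foliation on the $\mucube$ to a three-interval exchange transformation (3-IET), obtained by taking the first-return Poincar\'e map on a carefully chosen transversal, and the identification of the associated Rauzy-Veech renormalization dynamics with the Levitt-Yoccoz gasket construction. Heavy use of the cubic symmetry of the $\mucube$ lets one restrict almost every argument to a single octant in $\RPt$ and recover the full picture by symmetry.

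\textbf{Item 1.} I would first treat the three large square zones $Q_x,Q_y,Q_z$ directly. For $B\in Q_z$, planes orthogonal to $B$ meet only one family of horizontal square windows of the $\mucube$ at a time, so all $B$-sections are either closed or open with soul $[0:0:1]$. The four sides of $Q_z$ arise as the loci where a saddle connection first forms between opposite monkey-saddle vertices sitting on adjacent horizontal windows; writing down the four linear equations for these connections produces exactly the square with vertices $[\pm 1:0:1]$ and $[0:\pm 1:1]$. Rotating by the coordinate permutations yields $Q_x$ and $Q_y$, and a direct check shows $\RPt\setminus(Q_x\cup Q_y\cup Q_z)$ is the union of four open triangles $T_i$, one per sign pattern. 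On each $T_i$ I would build a canonical transversal $\Sigma$ picking up one arc from each slab bounded by two consecutive horizontal windows. The first-return map of the $B$-foliation to $\Sigma$ is a 3-IET $f_B$, and I would verify that, as $B$ varies in $T_i$, the combinatorics of $f_B$ stays within a single Rauzy class. The standard Rauzy-Veech induction $\mathcal{R}$ on this class is piecewise projective on $T_i$; the preimages under iterates $\mathcal{R}^n$ of the three vertices of the simplex, which correspond to $f_B$ being eventually periodic and thus to the foliation being integrable with a definite soul, cut $T_i$ into exactly the triangles of a Levitt-Yoccoz gasket.

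\textbf{Items 2--4.} With this picture the zero-measure claim for $\cE(\mucube)$ reduces to the known fact that in this Rauzy class Lebesgue-almost every 3-IET is eventually stabilized by $\mathcal{R}$, so almost every $B\in T_i$ lies in some $\zone_\ell(\mucube)$. For item 3, a soul $\ell$ attached to a stability zone reached in $n$ renormalization steps satisfies $\ell\propto M^{\top}e$, where $M=M_{i_1}\cdots M_{i_n}$ is the product of the Rauzy-Veech matrices encountered along the way. The diameter $p_\ell$ of $\zone_\ell(\mucube)$ is comparable to $\|M\|^{-1}$ and its area $a_\ell$ to $|\det M|\cdot\|M\|^{-4}$. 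The inequalities $A\|\ell\|^{-3/2}\le p_\ell\le B\|\ell\|^{-1}$ and $C\|\ell\|^{-3}\le a_\ell\le D\|\ell\|^{-3}$ then follow from uniform control of $\|M\|$ versus its Perron eigenvector, using that $M$ ranges over finite products of $3\times 3$ nonnegative integer matrices whose joint substitution is primitive. For item 4, I order souls by the depth $n$ and observe that $\|\ell_n\|^2$ is bounded above by $\|M\|^2$ and below by $|\det M|^{2/3}$; the upper bound is realized by a walk consistently taking the maximal-expansion branch, whose asymptotic eigenvalue is the Tribonacci constant $\alpha$, yielding $\|\ell_n\|^2\le 3(1+2n)^{2\log_3\alpha}$, while the lower bound corresponds to the most balanced walk and produces the quadratic-logarithmic lower estimate.

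\textbf{Main obstacle.} The hardest step is the rigorous identification in item 1 of the combinatorial Rauzy tree on each $T_i$ with a Levitt-Yoccoz gasket: one has to show that every Rauzy-Veech move available on the first-return map of a $\mucube$ $B$-foliation, and only such moves, comes from one of the three substitution rules whose common incidence matrix has the Tribonacci polynomial as characteristic polynomial. Once this is in hand, items 2--4 become essentially standard renormalization-theoretic estimates for substitutive 3-IETs. A secondary difficulty is matching the stability zones produced by the cylinder-exchange procedure illustrated in Fig.~\ref{fig:zone} with those coming out of the gasket construction, which amounts to ruling out hidden stability zones outside $\bigcup_i T_i$ and along the boundaries $\partial Q_x\cup\partial Q_y\cup\partial Q_z$.
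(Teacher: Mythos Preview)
The survey you are reading does not actually prove this theorem: it states the result and refers to~\cite{DD09} for the argument, so there is no in-paper proof to compare your proposal against. What follows are remarks on how your outline relates to the original source and where it is loose.

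Your overall strategy --- reduce the $B$-foliation on the \mucube\ to a one-dimensional first-return system, recognize a self-similar renormalization, and read off the gasket structure and the norm estimates from the renormalization matrices --- is the right shape and is close in spirit to what De~Leo and Dynnikov do. Two points deserve tightening. First, the reduction is not literally to a standard orientation-preserving $3$-IET with a single Rauzy class; what arises from the \mucube\ is a three-interval \emph{system of isometries} (equivalently, an interval translation map / band complex in the Dynnikov--Skripchenko sense), and the induction one performs is the Rips/Rauzy-type induction on that object. The distinction matters because the renormalization semigroup you get is generated by three specific $\GL_3(\bZ)$ matrices whose action on the positive simplex is exactly the ``inscribe the triangle with vertices $e_i+e_j$ and recurse'' rule quoted in the survey; identifying this with a Rauzy class of ordinary IETs is not quite right and would make the combinatorics harder, not easier. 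Second, your argument for item~2 (``Lebesgue-almost every 3-IET is eventually stabilized'') is not the mechanism used: for these band complexes the relevant fact is that the Rips induction terminates (the system becomes of \emph{thin type} only on a measure-zero set), which is what~\cite{DD09} establishes directly for this particular generating set rather than importing from IET theory.

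For items~3 and~4 your sketch is essentially correct once the right renormalization is in place: the soul $\ell$ at depth $n$ is read off from the product $M$ of the three generating matrices, the zone's perimeter and area scale as you say, and the extremal products are governed on one side by the Tribonacci substitution (giving the upper bound with $\alpha$) and on the other by the most balanced ternary expansion (giving the $\log_3^2$ lower bound). The ``secondary difficulty'' you flag --- ruling out stability zones hiding on $\partial Q_x\cup\partial Q_y\cup\partial Q_z$ --- is handled in~\cite{DD09} by the explicit description of the critical $B$-sections along those segments, not by an abstract argument, so you should expect to do that part by hand.
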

The Levitt-Yoccoz gasket (see Fig.~\ref{fig:dc}) is a self-projective parabolic fractal set (see~\cite{DL15b} for details on its story and geometry)
generated by the following recursive algorithm: given a triangle in $\RPt$ with vertices $[e_1]$, $[e_2]$ and $[e_3]$, subtract from it the
inscribed one with vertices $[e_1+e_2]$, $[e_2+e_3]$ and $[e_3+e_1]$ and proceed recursively with the three triangles left.
The second and third statement proves Conjecture~\ref{conj:size} and partially Conjecture~\ref{conj:zm} for the $\mucube$.
The fourth led the present author to the study of attractors of general self-projective semigroups based on the asymptotics
of the norms of corresponding semigroups of linear transformations~\cite{DL15}. 
\begin{figure}
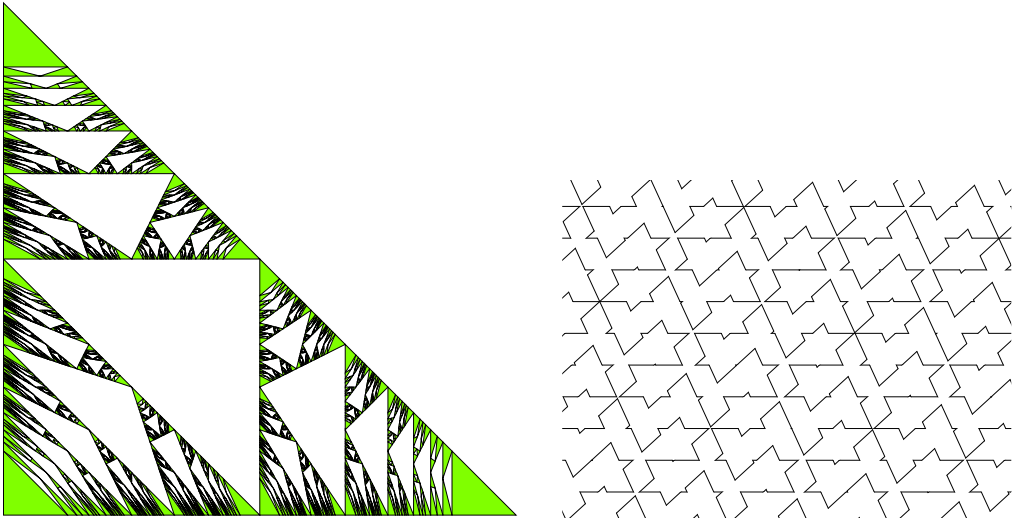

  \centering
    \includegraphics[width=7cm,clip=true,trim=80 250 20 100]{LY}\hskip.5cm\includegraphics[width=6cm]{chaoticSec}
  \caption{%
    \footnotesize                                                                                                                                                                                                               
    (left) Seven iterations of the Levitt-Yoccoz gasket.
    (right) A $B$-section for the direction $B=[\alpha^2-\alpha-1:\alpha-1:1]$, where $\alpha\simeq1.839$ is the Tribonacci constant,
    giving rise to chaotic sections on the \mucube.
  }
  \label{fig:dc}
\end{figure}
Moreover, we were able in this case to find explicit directions $B$ with $\cF_B(\mucube)$ chaotic, for example $B=[\alpha^2-\alpha-1:\alpha-1:1]$,
where $\alpha$ is the unique real solution of $\alpha^3=\alpha^2+\alpha+1$ (Tribonacci constant). Previously, the only known examples
were abstract combinatorial constructions by S.P. Tsarev in the 2-irrational case and Dynnikov in the fully irrational one (both published in~\cite{Dyn97}).

A further step in proving the Novikov conjecture for the \mucube is due to Avila, Hubert and Skripchenko in~\cite{AHS16b}.
Let us denote by $\dim_H$ the Hausdorff dimension of a set:
\begin{theorem}[Avila, Hubert and Skripchenko, 2016]
  $\dim_H\cE(\mucube)<2$.
\end{theorem}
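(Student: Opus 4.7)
The plan is to exploit the structure of $\cE(\mucube)$ as the non-wandering set of a projective renormalization scheme and then bound its Hausdorff dimension via thermodynamic formalism. Since $\cF_B(\mucube)$ is trivial for $B$ in the interior of the three squares $Q_x$, $Q_y$, $Q_z$ of the earlier De~Leo--Dynnikov theorem, we immediately reduce to bounding $\dim_H(\cE(\mucube)\cap T_i)$ for each of the four open triangles $T_i$. On each $T_i$ the set of integrable directions forms a Levitt--Yoccoz gasket, so $\cE(\mucube)\cap T_i$ is contained in the limit set of the self-projective iterated function system (IFS) that generates the gasket.

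First I would describe this IFS explicitly. The recursive subdivision of each $T_i$ into three similar sub-triangles by joining midpoints (in projective coordinates) is generated by three projective contractions $\varphi_1,\varphi_2,\varphi_3:T_i\to T_i$ coming from the natural Rauzy--Veech-type renormalization of the systems of isometries arising on the \mucube\ (as developed by Dynnikov--Skripchenko and by Avila--Hubert--Skripchenko). Each $\varphi_k$ is the projectivization of a nonnegative integer matrix, and the key point, established via the earlier analysis of chaotic sections on the \mucube, is that $\cE(\mucube)\cap T_i$ coincides with the set of directions whose renormalization orbit remains forever inside the gasket, i.e.\ with the limit set $\Lambda_i=\bigcap_{n\ge0}\bigcup_{|w|=n}\varphi_w(T_i)$.

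Next I would set up Bowen's formula. The IFS $\{\varphi_k\}$ is conformal in the projective sense and uniformly contracting on any compact subtriangle that avoids the vertices of $T_i$. The obstruction is precisely at the three vertices, where the $\varphi_k$ degenerate; these correspond to the three rational cusp directions of $T_i$. The topological pressure function
\[
P(t)=\lim_{n\to\infty}\frac{1}{n}\log\sum_{|w|=n}\sup_{x\in T_i}\|D\varphi_w(x)\|^t
\]
is well-defined, decreasing, continuous and convex in $t$, and by Bowen's theorem the Hausdorff dimension of $\Lambda_i$ equals the unique root $t_i^*$ of $P(t)=0$ (here one uses a Mauldin--Urba\'nski/McMullen-style treatment to handle the parabolic cusps at the vertices). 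Thus it suffices to show $P(2)<0$ for each $i$.

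The hard part is this last strict inequality. My strategy would be: (i) observe that $P(2)\le 0$ is automatic since $\Lambda_i$ has planar Lebesgue measure zero (which is part 2 of the De~Leo--Dynnikov theorem already quoted); (ii) rule out $P(2)=0$, equivalently the existence of a conformal measure of dimension $2$. For this, one combines positivity of the top Lyapunov exponent of the Rauzy--Veech cocycle over the IFS --- which is essentially the Kontsevich--Zorich-style positivity proved by Avila--Hubert--Skripchenko for this specific family via the existence of a sufficiently non-trivial loop in the Rauzy graph producing a Pisot-like matrix with spectral radius strictly greater than $\sqrt{\det}$ --- with a distortion estimate ruling out a parabolic obstruction at the cusps (the polynomial-versus-exponential behavior is controlled by item 4 of the earlier theorem, which pins down the growth of the norms $\|\ell_n\|$). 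Positivity of this gap forces $P(2)$ to be strictly negative, hence $t_i^*<2$ for each triangle, and consequently $\dim_H\cE(\mucube)=\max_i t_i^*<2$.
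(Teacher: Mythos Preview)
The paper is a survey and does not give its own proof of this theorem; it merely states the result and cites~\cite{AHS16b}. So there is no in-paper argument to compare against, and the relevant question is whether your outline matches the Avila--Hubert--Skripchenko strategy and whether it is internally sound.

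Your overall architecture is the right one and does align with what \cite{AHS16b} actually do: reduce to the four triangles, identify $\cE(\mucube)\cap T_i$ with the Rauzy/Levitt--Yoccoz gasket (the non-escaping set of the fully subtractive projective algorithm), accelerate past the parabolic vertices, and run a thermodynamic-formalism/Bowen argument on the induced system. One small slip: on the squares $Q_x,Q_y,Q_z$ the foliation is \emph{integrable}, not trivial --- those are stability zones --- but the conclusion that $\cE(\mucube)$ misses them is of course correct.

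The genuine gap is in your step ``$P(2)<0$''. Neither half of your argument works as written. For (i), Lebesgue-nullity of the limit set does \emph{not} give $P(2)\le 0$ in the parabolic setting: for non-uniformly contracting IFS with neutral fixed points one can perfectly well have $P(2)=0$ together with a measure-zero limit set (the pressure may fail to cross zero transversally, and the Mauldin--Urba\'nski machinery you invoke requires exactly the acceleration you have not yet carried out). For (ii), positivity of the top Lyapunov exponent of the Rauzy--Veech cocycle and the existence of a ``Pisot-like'' loop are statements about the cocycle over the renormalization dynamics, not about the contraction of the projective IFS itself; they do not by themselves yield a strict pressure gap at $t=2$. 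What \cite{AHS16b} actually prove is a quantitative contraction estimate for a suitably induced (jump) transformation that kills the parabolic behaviour at the three vertices, and from this they obtain directly that the sum of squared diameters of the level-$n$ cylinders decays exponentially --- equivalently, that the pressure of the \emph{accelerated} system at $t=2$ is strictly negative. Your outline names the right objects but skips precisely this estimate, which is the entire content of the theorem.
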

To date no non-trivial lower bound for the Hausdorff dimension of $\cE(\mucube)$ in known but, according to a conjecture of the present
author on attractors of general self-projective semigroups (see~\cite{DL15b}), $\dim_H\cE(\mucube)\geq1.63$. A numerical rough
estimate in~\cite{DD09} gives $\dim_H\cE(\mucube)\simeq1.72$.

Finally, we mention a rigidity result on the stability zone of foliations of the torus in $\mupars$.
\begin{definition}
  A $\mupar$ is a polyhedron obtained from a $\mucube$ after rescaling over the three coordinate axes.
\end{definition}
\begin{theorem}
  Consider a family of $\mupars$ foliating the plane and let $f$ be any piecewise function having that
  family as level sets. Then $\cD(f)=\cD(\mucube)$.
\end{theorem}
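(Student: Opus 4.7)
The plan is to reduce the statement to the known structure of $\cD(\mucube)$ via a linear change of variables. By the very definition of $\mupar$, any such polyhedron is of the form $A(\mucube)$ for some diagonal positive matrix $A=\mathrm{diag}(a,b,c)$; if $f$ has $\mupars$ as level sets then $g:=f\circ A$ is a piecewise function having $\mucubes$ as level sets, and one is reduced to comparing the stereographic maps of $f$ and $g$.

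First, I would verify that the linear diffeomorphism $\phi(x)=A^{-1}x$ conjugates the foliation of $\bR^3$ by level sets of $f$ to the foliation by level sets of $g$. Because $A$ is diagonal and hence self-adjoint, the pullback of a constant 1-form $B$ is $A^{-1}B$, so intersecting a level set of $f$ with a plane perpendicular to $B$ corresponds under $\phi$ to intersecting the matching level set of $g$ with a plane perpendicular to $A^{-1}B$. This correspondence preserves all the topological data entering the definition of the SM: compactness and topological rank of the components $\cK_j,\cM_k$; the cylinder decomposition separating the rank-2 components; the critical-point configuration at the bases of the cylinders; and consequently the soul, which transforms equivariantly under $A$.

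Next, I would identify the relevant second-homology groups of the two ambient tori via the isomorphism induced by $A$ and conclude, zone by zone, that $\cD_\ell(f)=\cD_\ell(\mucube)$ for every admissible $\ell$, whence $\cD(f)=\cD(\mucube)$.

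The hard part is step two: checking rigorously that the algorithmic description of the soul — as the indivisible generator of the rank-1 subring of $H_2(\bT^3,\bZ)$ determined by the image of $Ann(\gamma)\subset H_1(M^2_g,\bZ)$ in $H_1(\bT^3,\bZ)$, as recalled in Section~\ref{sec:num} — transforms naturally under the linear change of variables, taking into account the relation between the lattices $\bZ^3$ and $A\bZ^3$. Once this equivariance is in place, the preservation of the combinatorial stability-zone structure yields the claim.
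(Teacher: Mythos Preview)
The paper states this rigidity result without proof, so there is no argument of the paper's to compare against directly. Your proposal, however, has two genuine gaps.

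First, the reduction to a single linear map $A$ is not available. A one--parameter family of $\mupars$ that actually foliates the torus cannot consist of translates of a fixed $\mupar$ (two nearby translates of the $\mucube$ intersect), so the leaves $f^{-1}(c)$ must correspond to \emph{different} diagonal rescalings $A_c$. Hence there is no single $A$ for which $g=f\circ A$ has $\mucubes$ as level sets, and your first step already fails.

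Second, even for a single leaf your linear change of variables does not yield equality. Carrying your computation through, a $B$--section of $A(\mucube)$ is the $A$--image of an $(A^{T}B)$--section of the $\mucube$, so what one obtains is
\[
\cD_{\,\ell'}(\mupar)\;=\;A^{-1}\,\cD_{\ell}(\mucube),\qquad \ell'=A^{-T}\ell,
\]
i.e.\ the stereographic map transforms \emph{equivariantly} under the projective action of $A$ on $\RPt$, not invariantly. You then slide from ``transforms equivariantly'' to ``$\cD_\ell(f)=\cD_\ell(\mucube)$ zone by zone'', which is exactly the step that needs justification and is in fact false for a generic diagonal $A$: the large square zone $Q_z$ with vertices $[0{:}1{:}1],[1{:}0{:}1],[0{:}{-}1{:}1],[{-}1{:}0{:}1]$ is sent by $A^{-1}$ to a quadrilateral with vertices $[0{:}c{:}b],[c{:}0{:}a],\dots$, which coincides with $Q_z$ only when $a=b=c$. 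So the content of the theorem is precisely a rigidity phenomenon that survives the rescaling, not a formal consequence of linear equivariance; your outline does not supply that mechanism.
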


\renewcommand\labelitemi{$\clubsuit$}
\noindent
{\bf Numerical objects yet unexplored:}
\begin{itemize}
\item SMs relative to surfaces of genus higher than 4.
\item SMs relative to functions whose level sets have genus higher than 4.
\item $B$-sections when $B$ is a closed 1-form with zeros. 
\end{itemize}
\subsubsection{Back to Physics}
\label{subsec:adv}
The results of Sec.~\ref{subsec:q3m} shed full light on the structure of the orbits of quasielectrons
under a magnetic field. Given a FS $M^2_g$, the set of magnetic field directions $B$ for which the
{\em chaotic} regime arise has zero measure, so it is in principle undetectable experimentally.
A generic direction gives rise to either all orbits homologous to zero or to open orbits that lie on
a rank-1 component of $M^2_g$ of topological rank 1 or 2. The first and last case are stable by small
perturbations of the direction of $B$ while the second arises only across 1-parametric families of
directions (see the schematic representation in Fig.~\ref{fig:Maltsev}).
\begin{figure}
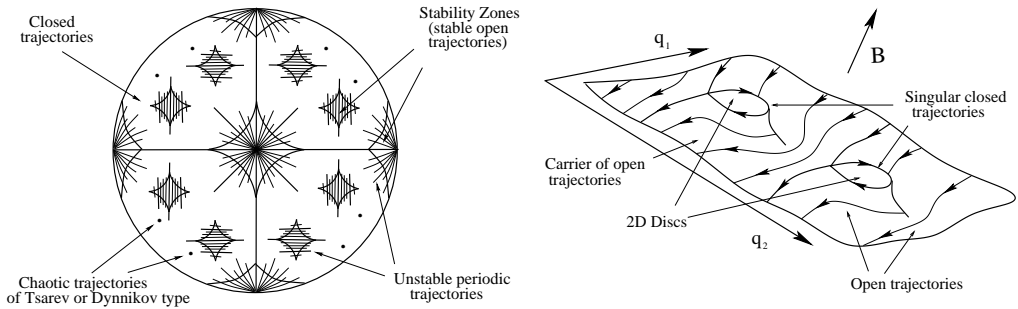

  \centering
      \includegraphics[width=7cm]{SMmodel} \includegraphics[width=6.3cm]{PeriodicPlane}
  \caption{%
    \footnotesize                                                                                                                                                                                                               
    (left) Model of the SM of a generic surface;  (right) A genus-1 rank-2 component filled by open trajectories  (from~\cite{Mal17b}). 
  }
  \label{fig:Maltsev}
\end{figure}

This picture immediately explains qualitatively the SM obtained experimentally (see Fig.~\ref{fig:nm}): the dark islands,
where the magnetoresistance grows quadratically with the intensity of $B$, are exactly the SZ
of $\cD(M^2_g)$. The soul of each SZ dictates the asymptotics of the open orbits: if $B\in\cD_\ell(M^2_g)$,
then every nonsingular open orbit is strongly asymptotic to the direction $B\times\ell$. 
{\em The souls\hskip.05cm\footnote{From the point of view of physics, {\em souls} are Miller indices of the dual lattice.} of the SZ 
  are a hidden quantum number of purely topological origin of the system 
  and that was absolutely unknown previously to the physics community.}
Note also that now we are able to tell that some of the proposed SM (e.g. see Fig~\ref{fig:Ga60}) are qualitatively
wrong since two SZ with different souls {\em cannot overlap} for elementary topological reasons.
%
\begin{figure}
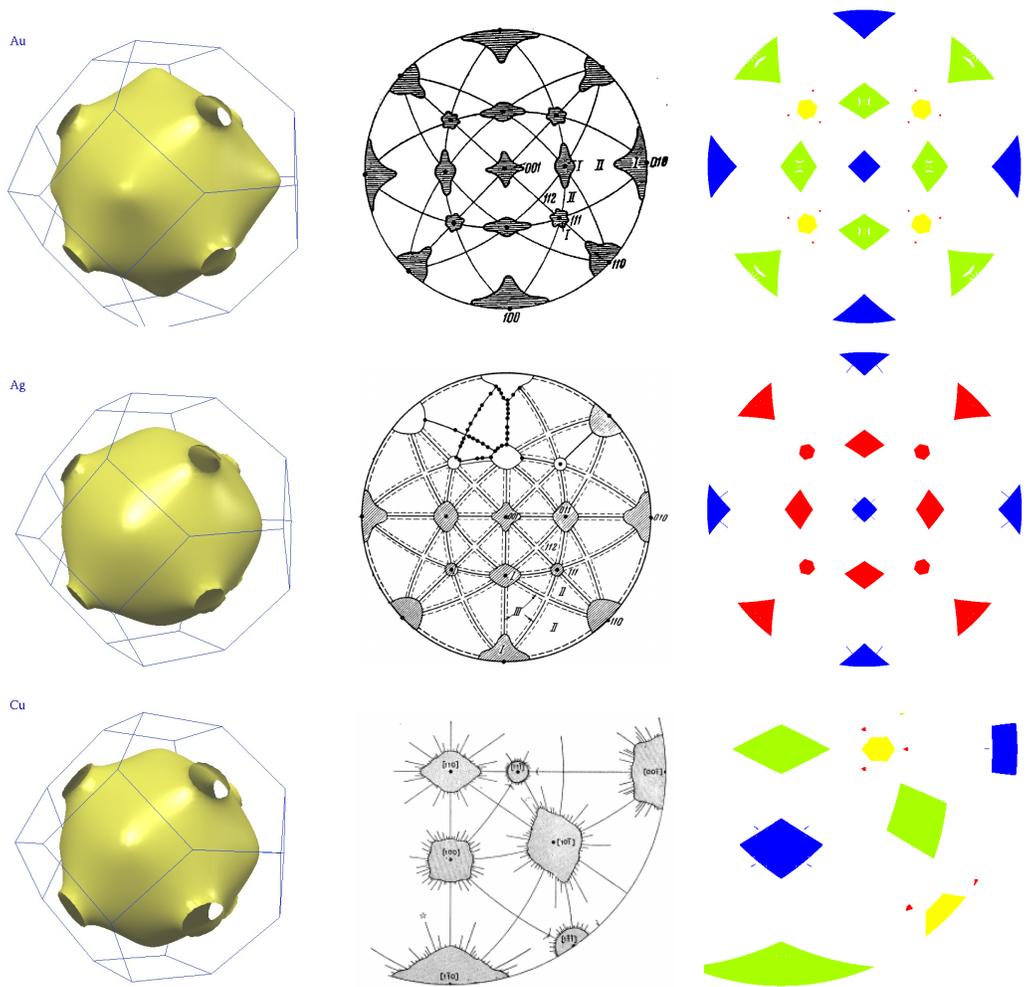

  \centering
  \includegraphics[width=4.25cm]{Au}\hskip.40cm\includegraphics[width=4.25cm]{Au-exp}\hskip.40cm\includegraphics[width=4.25cm]{Au-th}\\
  \vspace{.3cm}
  \includegraphics[width=4.25cm]{Ag}\hskip.40cm\includegraphics[width=4.25cm]{Ag-exp}\hskip.40cm\includegraphics[width=4.25cm]{Ag-th}\\
  \vspace{.3cm}
  \includegraphics[width=4.25cm]{Cu}\hskip.40cm\includegraphics[width=4.25cm]{Cu-exp2}\hskip.40cm\includegraphics[width=4.25cm]{Cu-th-rotated}\\
  \caption{%
    \small
    (top) FS of Gold and its experimental~\cite{Ga60} and numerical~\cite{DeL04,DeL05b} Stereographic Map.
    (middle) FS of Silver and its experimental~\cite{AG62b} and numerical~\cite{DeL04,DeL05b} Stereographic Map.
    (bottom) FS of Copper and its experimental~\cite{KRBK66} and numerical Stereographic Map.
  }
\label{fig:nm}
\end{figure}

As mentioned in Sec.~\ref{subsec:mr}, the SM was found experimentally for many metals.
None of those, to the author's knowledge, has genus 3. The smallest genus arises for the FS of
noble metals, for whom $g=4$. In this case, in the integral regime the FS is generically decomposed into
2 components of genus 1 and rank 2 separated by 3 cylinders of orbits homologous to zero -- note that
the smallest genus needed to have four pairs of genus-1 components of open orbits is $g=5$.
After adding support for genus-4 surfaces to the NTC library, we extracted from~\cite{BF76} an accurate
approximation for the Fermi energy of the noble metals and finally produced~\cite{DeL04,DeL05b}
the SM shown in Fig~\ref{fig:nm} for Gold, Silver and Copper, the first metals for which a SM was produced experimentally.
As pictures show, there is a striking closeness of the experimental data with the ones coming from
the semiclassical approximation. 

The study of the physics of the magnetoresistance in a strong magnetic field has been continued in the last twenty years, at the light of the deep
mathematical advances, by A. Maltsev, jointly with Novikov and Dynnikov, in a long series of
papers~\cite{NM96,Mal97,DM97,NM98,NM03a,NM03b,NM04,NM06,Mal17a,Mal17b}.
Among the main results of his analysis we mention the observation that, under the integrable regime, the conductivity tensor has rank 1,
namely the current an flow in only one direction, and that the precises boundaries of a SZ are not detectable via standard magnetoresistance
measurements but could be determined experimentally by studying oscillatory phenomena.

\medskip
\renewcommand\labelitemi{$\clubsuit$}
\noindent
{\bf Open tasks:}
\begin{itemize}
\item Exploring SMs relative to FSs of metals for which experimental data are already in literature.
\item Encouraging the solid state community to get new more detailed data.
\end{itemize}
%
\subsection{QP functions in $\bR^2$ with 4 and more quasiperiods}
\label{subsec:q4}
In 2004, Maltsev~\cite{Mal04} showed that quasiperiodic functions with {\em any} number of quasiperiods could play a relevant role
in solid state physics. His arguments is based on a result of C.W.J. Beenakker~\cite{Bee89} that, at the end on Eighties, used a semiclassical
approximation to explain an anomalous phenomenon in the magnetoresistance of a 2-dimensional electron gas (2DEG) subject to a
weak periodic potential. A 2DEG is a semiconductor structure where the motion of electrons in one direction is somehow constrained
(and therefore quantized) so that in many phenomena only the projection of the momentum in the plane perpendicular to the constrained direction
play a role and so the system can be considered 2-dimensional. The major systems where such 2D behavior has been observed and studied
are metal-oxide-semiconductor structures, quantum well and superlattices (see~\cite{AFS82} for a thorough review on this topic).

According to Beenakker's analysis, when a constant magnetic field $B$ and an electric field $E=dV$ are applied to a 2DEG, the drift
of the center of the cyclotron orbits $r$ satisfies, in appropriate units, the equations
$$
\dot r_i = \frac{1}{\|B\|^2}\{r_i,V^{eff}_B(r)\}_B=\frac{1}{\|B\|^2}\epsilon_{ijk}\frac{\partial V^{eff}_B}{\partial r_j}B^k\,,
$$
where $V^{eff}_B$ is an effective potential depending on $V$ and $B$. Since the relevant degrees of freedom of the electron gas are in the plane
perpendicular to $B$, we can take $B=B_zdz$ and consider $V$ as a function of the two coordinates $(x,y)$ in the plane perpendicular to $B$.
The motion of electrons, therefore, is given by the level lines of $V^{eff}_B(x,y)$ and Maltsev's analysis of the problem in~\cite{Mal04,NM06} shows
that it is possible to generate quasi-periodic effective potentials with any number of quasiperiods by using several standard techniques and that,
in turn, the topological properties of the trajectories -- namely whether they are closed or open and their asymptotic directions -- can be detected
experimentally through measurements of the magnetoresistance of the 2DEG, similarly to the case of metals. Note that in this case, though,
we have more freedom since we cannot just change the direction of $B$ but also the potential, that took the role of the Fermi energy. 

We recall that the Novikov problem for $n=4$ consists in studying any of the following equivalent objects:
\begin{enumerate}
\item The level sets of a multivalued function $F=(S_1,S_2,\varepsilon):\bT^4\to\bR^3$, where $S_1$ and $S_2$
  are pseudoperiodic functions (see next section) and $\varepsilon$ is singlevalued.
\item The 2-planar sections of four-periodic hypersurfaces of $\bR^4$.
\item The leaves of the foliations induced on an embedded hypersurface $i:M^3\subset\bT^4$ by pairs of closed 1-forms $\omega_{1,2}=i^*B_{1,2}$,
  where $B_1$ and $B_2$ are non-parallel constant 1-forms on $\bT^4$.
\item The trajectories of the equations of motions corresponding to the Hamiltonian $\varepsilon$ via the Poisson bracket
  $$
  \{p_i,p_j\}_B=*\left(dp_i\wedge dp_j\wedge B_1\wedge B_2\right)
  $$
  on $\bT^4$, where $*$ is the Hodge star with respect to the Euclidean metrics and $B_1$ and $B_2$ non-parallel constant 1-forms on $\bT^4$.
\item The level sets of quasiperiodic functions with 4 quasiperiods.
\end{enumerate}
Given a fixed function $\varepsilon$, the phase space for this case is $\bR\times G_{4,2}(\bR)$. It is easy to see that, just like for $n=3$,
all rational directions in $G_{4,2}(\bR)$ give rise to all closed (possibly non-homologous to zero) sections in $\bT^4$. Indeed, a rational
$\Pi_0$ is given (in several ways) by a pair of rational directions $B_1$ and $B_2$. All leaves of $B_1$ are 3-tori $T_a$ embedded in $\bT^4$
so every level set $\varepsilon_c=\varepsilon^{-1}(c)$ restricts on $T_a$ to a triply periodic surface $M_a\subset T^a$.
The $\Pi_0$-sections of $\varepsilon^{-1}(c)$, therefore, are $B_2$-sections of $M_a$ and so, by Zorich theorem, they are all closed and lie
in a genus-1 component of $M_a$ embedded with rank 1 or 2 in $T_a$. Moreover, this result is stable with respect to small changes in
the direction of $B_2$, namely over all rational lines $r$ passing through $\Pi_0$ it is defined an $\epsilon_r$ so that, for all (rational or not)
$\Pi$ on $r$ closer to $\Pi_0$ then $\epsilon_r$,  all $\Pi$-sections have a strong asymptotic directions. In fact, the same result clearly holds
for any number $n\geq4$ of quasiperiods. The problem, though, is what happens for the directions $\Pi$ close to $\Pi_0$ outside those rational lines.

So far, only two analytical works (and no numerical one) have been published on level sets of quasiperiodic functions with more than 3 quasiperiods,
namely a first one by Novikov~\cite{Nov99} and a follow up jointly with Dynnikov~\cite{DN05}.
Their main result is the following analogue of the Zorich theorem for the case $n=3$:
\begin{definition}
  A level hypersurface $\varepsilon_c=\varepsilon^{-1}(c)$ is {\em topologically completely integrable} (TCI) for the 2-plane $\Pi\in G_{4,2}$ if all $\Pi$-sections
  are either compact or have a strong asymptotic direction. We say that $\varepsilon_c$ is {\em stable} if this property is still true after small perturbations
  of $\Pi$ and $\varepsilon$.
\end{definition}
\begin{theorem}[Novikov and Dynnikov, 2005]
  There is an open dense set of smooth functions $\cU\subset C^\infty(\bT^4)$ with the property that, for every $f\in\cU$, there is an
  open dense set $V_f\subset G_{4,2}(\bR)$ such that every $\varepsilon_c$ is a TCI for all $\Pi\in V_f$.
  Moreover, for a given $c$ and $\Pi$, there is a $R>0$ such that all open $\Pi$-sections of $\varepsilon_c$ are contained
  inside some cylinder of radius $R$. 
\end{theorem}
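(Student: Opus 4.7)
The plan is to reduce the four-dimensional problem to applications of the three-dimensional Zorich theorem (Theorem~\ref{thm:rational}) on rational sub-tori foliating $\bT^4$, then deduce the open-dense structure on $G_{4,2}(\bR)$ by a stability argument.

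First I would exploit the fact that \emph{rational 2-planes contained in some rational hyperplane} form a dense subset of $G_{4,2}(\bR)$. Fix such a $\Pi_0 = \ker B_1^0 \cap \ker B_2^0$ with $B_1^0$ a rational constant 1-form. The leaves $T_a = \{B_1^0 = a\}$ give a foliation of $\bT^4$ by 3-tori; for $\varepsilon$ in an open dense subset $\cU \subset C^\infty(\bT^4)$, the restriction $\varepsilon|_{T_a}$ is Morse for all but finitely many $a$, and each level set $\varepsilon_c \cap T_a$ is then a generic embedded surface in $T_a \simeq \bT^3$. The $\Pi_0$-sections of $\varepsilon_c$ are, leaf by leaf, exactly the $B_2^0$-sections of $\varepsilon_c \cap T_a$ inside $T_a$. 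By Zorich's theorem the induced foliation $\cF_{B_2^0}(\varepsilon_c \cap T_a)$ is either trivial (all sections compact) or integrable (open sections lie on a genus-$1$ rank-$2$ component and are strongly asymptotic to $\ell \times B_2^0$, by the Dynnikov proposition in Section~\ref{subsec:q3m}). In both cases $\varepsilon_c$ is TCI for $\Pi_0$, establishing density of $V_f$.

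The crux is openness, because a generic small perturbation of $\Pi$ in $G_{4,2}$ changes \emph{both} $B_1^0$ and $B_2^0$ and therefore also destroys the rational 3-torus foliation used above. Here I would exploit that the integrable/trivial structure identified at $\Pi_0$ is robust. In the trivial case, structural stability of a Morse foliation on a compact 2-surface propagates to every nearby $\Pi$ once $\varepsilon$ is generic, since the union of critical planes (those at which homoclinic reconnections occur) is a closed nowhere dense subset of $G_{4,2}$. In the integrable case, the open $\Pi_0$-sections lie on a genus-$1$ rank-$2$ component $\Sigma$ which, viewed in $\bT^4$, is an embedded 2-torus whose homology class is a nonzero primitive element of $H_2(\bT^4,\bZ)$; small $C^\infty$-perturbations of $\varepsilon$ and $\Pi$ preserve this torus up to isotopy, and the $\Pi$-sections of an embedded 2-torus are always a linear foliation, so they are either closed or have a strong asymptotic direction. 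The cylinder bound in the last sentence then follows automatically: each $\Sigma$ lies in a tubular neighborhood of finite transverse width in $\bT^4$, and the open $\Pi$-sections, being confined to the finitely many such tubular neighborhoods occurring in $\varepsilon_c$, cannot wander outside a cylinder of radius $R$ around the asymptotic 1-plane.

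The main obstacle I anticipate is \emph{propagating} the Zorich dichotomy off the rational foliation, i.e., showing that the rank-$2$ torus $\Sigma$ detected in a single leaf $T_a$ actually glues across nearby leaves into a genuine embedded torus of $\bT^4$ (rather than splitting, opening new handles, or acquiring transverse self-intersections), and then that this gluing survives perturbations of $\Pi$ away from the rational 2-plane. The natural technical device is to combine normal hyperbolicity of $\Sigma$ as an invariant surface of the leaf-wise gradient flow of $\varepsilon|_{T_a}$ with the cohomological rigidity of the class $[\Sigma] \in H_2(\bT^4,\bZ)$; together these force $\Sigma$ to persist as a single smooth 2-torus under simultaneous perturbation of $\varepsilon$ and $(B_1,B_2)$, after which the reduction to linear foliations of a 2-torus finishes the proof.
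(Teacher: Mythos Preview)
Your density argument matches the paper's own discussion almost verbatim: the paper too slices $\bT^4$ into $3$-tori $T_a$ along a rational $B_1$, applies Zorich's theorem leaf by leaf, and notes the resulting stability along rational lines through $\Pi_0$ in $G_{4,2}$. However, this is a survey and it gives \emph{no proof} of the full theorem; it explicitly flags openness in the remaining directions of $G_{4,2}$ as ``the problem'' and then simply states the Novikov--Dynnikov result with a citation to~\cite{DN05}. So for the density half you agree with the paper, and for the openness half there is nothing in the paper to compare your attempt against.

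That attempt, though, has a concrete flaw. You invoke ``normal hyperbolicity of $\Sigma$ as an invariant surface of the leaf-wise gradient flow of $\varepsilon|_{T_a}$'', but $\Sigma$ is a piece of a \emph{level} set of $\varepsilon|_{T_a}$ and is therefore transverse to that gradient flow, not invariant under it. If instead you mean the Hamiltonian flow on $M_a$ whose orbits are the $B_2$-sections, then $\Sigma$ \emph{is} invariant, but that flow preserves area on $M_a$ and so offers no normal hyperbolicity either. Hence the persistence of the rank-$2$ torus under perturbations of $\Pi$ that destroy the rational hyperplane foliation cannot come from a normally-hyperbolic-invariant-manifold theorem. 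Some genuinely different argument---presumably the one in the cited paper, tracking how the rank-$2$ components $\Sigma_a$ assemble over $a\in\bS^1$ into a $3$-dimensional piece of $\varepsilon_c\subset\bT^4$ and controlling the separating cylinders of null-homologous orbits uniformly in $a$---is required. You have correctly located the obstacle, but the tool you reach for does not apply.
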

\begin{conjecture}[Novikov and Dynnikov, 2005]
  A generic level $\varepsilon_c$ is TCI stable for almost all $\Pi\in G_{4,2}(\bR)$.
\end{conjecture}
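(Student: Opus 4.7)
The plan is to upgrade the Novikov--Dynnikov open-dense theorem to full measure by reducing to the dimension-3 case, where a measure-zero result is already available for fixed surfaces (Theorem~\ref{p1}). The Grassmannian $G_{4,2}(\bR)$ has dimension $4$; first I would decompose a neighborhood of a semi-rational 2-plane $\Pi_0=\mathrm{span}(B_1,B_2)$, with $B_1\in(\bZ^4)^*$ indivisible, into three pieces: a 1-parameter family of rational 3-tori $T_a=B_1^{-1}(a)\subset\bT^4$; on each $T_a$, a 2-parameter family of $B_2$-directions in $\RPt$; and a complementary 1-parameter family of $B_1$-perturbations breaking the rationality. The foliation of $\bT^4$ by $\{T_a\}_{a\in\bS^1}$ reduces, fiberwise, to the Novikov problem on $\bT^3$ for the surface $\varepsilon^{-1}(c)\cap T_a$, so the higher-dimensional problem inherits a rich rational structure on the subvariety of semi-rational planes.

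Applying Dynnikov's theorem for surfaces on each $T_a$, the set of $B_2\in\RPt$ for which $\cF_{B_2}(\varepsilon^{-1}(c)\cap T_a)$ is chaotic has measure zero in $\RPt$; provided $\varepsilon$ is generic enough that this zero-measure property holds uniformly in $a$, a Fubini argument over $a\in\bS^1$ yields a measure-zero set of bad pairs $(a,B_2)$. I would then invoke the stability half of the Novikov--Dynnikov theorem to extend each TCI direction obtained this way to an open neighborhood in $G_{4,2}(\bR)$, covering, as $B_1$ ranges over indivisible rational covectors, an open dense full-measure subset of each rational stratum. Summing the measure contributions over the countably many rational $B_1$'s --- with quantitative control on the sizes of stability zones in the spirit of Conjecture~\ref{conj:size} --- one would obtain an open full-measure TCI set in $G_{4,2}(\bR)$.

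The critical obstacle is the behavior at fully irrational 2-planes $\Pi$, which form the generic subset of $G_{4,2}(\bR)$ and cannot be captured as $B_1$-fibrations over any compact 3-torus: for such $\Pi$ the leaves of $\ker B_1$ are dense in $\bT^4$ and the restriction of $\varepsilon$ to them is itself only a quasiperiodic function with 4 quasiperiods, so there is no compact surface on which to apply Theorem~\ref{p1}. One must instead argue that $\Pi$ is a limit of semi-rational planes whose stability zones expand in $G_{4,2}(\bR)$ fast enough to cover $\Pi$ in the limit; the precise quantitative statement amounts to a four-dimensional analogue of the cubic decay estimate $C/\|\ell\|^3$ in Conjecture~\ref{conj:size}. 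No tools comparable to Dynnikov's chamber decompositions or the Rauzy--Veech renormalization machinery used by Avila--Hubert--Skripchenko currently exist for Hamiltonian systems on $\bT^4$ with two commuting multivalued first integrals, and developing such a theory --- essentially an interval-exchange-like combinatorial model for planar sections of four-periodic hypersurfaces --- is where I expect the main work to lie and where the conjecture currently stands blocked.
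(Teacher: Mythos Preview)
The statement you are addressing is labeled a \emph{Conjecture} in the paper, not a theorem; the paper offers no proof, only the preceding Novikov--Dynnikov theorem establishing the open-dense (rather than full-measure) version. There is therefore no proof in the paper to compare your proposal against.

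Your outline is a reasonable research strategy rather than a proof, and you correctly identify this yourself in the final paragraph. The reduction to the $n=3$ case via semi-rational fibrations is exactly the mechanism the paper describes in the paragraph preceding the theorem, and your idea of upgrading it to full measure via Fubini plus quantitative stability-zone estimates is natural. But the two ingredients you flag as missing --- a four-dimensional analogue of the cubic decay in Conjecture~\ref{conj:size}, and any renormalization or combinatorial model for planar sections of four-periodic hypersurfaces --- are genuinely open, and the paper confirms this: it states that only two analytical works exist on the $n\geq4$ case and lists numerical exploration of $n=4,5$ as an open task. So your proposal accurately diagnoses both the available approach and where it runs out; it is not a proof, and neither the paper nor the literature it cites supplies one.
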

The case $n\geq5$ is believed to be much more complicated and no Zorich-like result is believed to hold.

\medskip\noindent 
{\bf Open tasks:} Explore numerically the cases $n=4$ and $n=5$.
%
\section{Level lines of $n-2$ quasiperiodic functions in $\bR^{n-1}$ with $n$ quasiperiods}
\label{sec:zor}
In this last section we consider yet another problem in quasiperiodic topology, this time dual in some sense to the Novikov
problem discussed in Sec.~\ref{sec:Nov}, namely the case of multivalued maps $f:\bT^n\to\bR^{n-1}$ with all but
one components singlevalued. The study of this problem was started by Zorich in 1994~\cite{Zor94,Zor97,Zor99} as a generalization
of his study of the Novikov problem for $n=3$.

In that case, Zorich proved that every open $B$-section of a surface $M^2_g\emb\bT^3$ is strongly asymptotic to a straight line for
all $B$ close enough to rational (Thm~\ref{thm:rational}). Note that, for $n=3$, the closed 1-form induced by $B$
on $M^2_g$ has irrationality degree not higher than 3, while a generic one has irrationality degree equal to the number of cycles
$2g$, namely 1-forms obtained through this construction are highly non generic. It is natural to ask whether $B$-sections
keep having an asymptotic directions when we make the setting more generic, namely by considering the embedding of
$M^2_g$ in some $\bT^n$, $n>3$. In this case, for $n$ large enough, we can get forms with full irrationality degree,
which enables one to use sophisticated dynamical systems tools that are unavailable for the $n=3$ case. 

As in the case $n=3$, we call $C_i(B)$ the components of $M^2_g$ filled by $B$-sections homologous to zero in $\bT^n$,
so that $M^2_g\setminus\cup C_i(B)$ is the union of periodic and minimal components $\cC_j(B)$
whose boundaries are all loops homotopic to zero. We denote by $g_j$ the genus of $\cC_j(B)$ and by $N$ the number of the $\cC_j(B)$.
\begin{theorem}[Zorich, 1994]
  Let $M^2_g\emb\bT^n$ be an embedding with full topological rank and assume $n\geq4g-3$.
  Then, for almost all $B\in\bR P^n$, the following properties hold:
  \begin{enumerate}
  \item $1\leq N(B)\leq g$\,.
  \item To each $\cC_j(B)$ is associated a direction $c_j\in\bR P^n$ such that
    $$
    \lim_{t\to\infty}\frac{\tilde\gamma(t)-\tilde\gamma(0)}{t}=c_j
    $$
    for every $B$-section $\gamma$ in $\cC_j(B)$, where $\tilde \gamma=\pi^{-1}_n\gamma$. 
  \item
    $$
    \limsup_{t\to\infty}\frac{\log d(\tilde\gamma(t),\ell(t))}{\log t}=\alpha(g_j)<1\,,
    $$
    where $\ell(t)$ is the straight line parallel to $c_j$ passing through $\tilde\gamma(0)$ and $1+\alpha(g_j)$ is the second Lyapunov exponent
    of the Teichmuller geodesic flow on the principal stratum of squares of holomorphic differentials on the surface
    of genus $g_j$.
  \end{enumerate}
\end{theorem}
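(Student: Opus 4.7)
The plan is to combine a Maier-type decomposition of $M^2_g$ with the dynamical-systems machinery of interval exchange transformations and the Kontsevich-Zorich cocycle. First I would decompose $M^2_g$ by Maier's theorem (Thm.~\ref{thm:mc}) into periodic components $C_i(B)$ filled by leaves homologous to zero in $\bT^n$ and the remaining periodic/minimal components $\cC_j(B)$, whose boundaries consist of loops homotopic to zero. Collapsing each boundary component to a point produces closed surfaces of genera $g_j\geq 1$, and since these genera add up to at most the total genus $g$ of $M^2_g$, we obtain the bound $1\leq N(B)\leq g$ claimed in (1).

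Next I would exploit the hypothesis $n\geq 4g-3$ to show that, for almost every $B\in\bR P^n$, the restricted 1-form $\omega_j=i^*B\big|_{\cC_j(B)}$ is a generic representative of its cohomology class in each $\cC_j(B)$. Concretely, the pullback map $H^1(\bT^n,\bR)\to H^1(\cC_j,\bR)\cong\bR^{2g_j}$ together with full topological rank guarantees that, once $n$ is at least of order $4g-3$, the periods of $\omega_j$ along a basis of $H_1(\cC_j,\bZ)$ are rationally independent for $B$ off a measure-zero subset of $\bR P^n$. On each $\cC_j$ the closed 1-form $\omega_j$ defines a measured foliation, and passing to a first-return map on a transversal realizes the flow as an interval exchange transformation (IET) with full Rauzy class, i.e., an abelian differential in the principal stratum of the moduli space of holomorphic differentials on a genus-$g_j$ surface.

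Statement (2) then follows from the Masur-Veech unique ergodicity theorem: for a.e. $B$, the IET associated with $\cC_j(B)$ is uniquely ergodic, so by Schwartzman's asymptotic cycle theorem the limit $c_j=\lim_{t\to\infty}\tilde\gamma(t)/t\in H_1(\bT^n,\bR)$ exists for every nonsingular leaf $\gamma\subset\cC_j(B)$ and is independent of the leaf. Projectively, $c_j\in\bR P^n$ is the image under $i_*$ of the unique asymptotic cycle of the ergodic transverse measure.

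For (3), the deviation rate $\alpha(g_j)$ is controlled by the Kontsevich-Zorich cocycle over the Teichm\"uller geodesic flow on the principal stratum of squares of abelian differentials on a surface of genus $g_j$. Renormalization (via Rauzy-Veech induction) shows that the discrepancy $\tilde\gamma(t)-\ell(t)$ in homology grows like $t^{\nu_2}$, where $\nu_2$ is the second Lyapunov exponent of this cocycle and $1+\alpha(g_j)=\nu_2$ in Zorich's normalization. Strict positivity of the spectral gap $\nu_1-\nu_2>0$, hence $\alpha(g_j)<1$, is the deep input from the Zorich-Kontsevich-Forni theory. The most delicate step is precisely this last one: verifying that the genericity ensured by $n\geq 4g-3$ propagates all the way into a set of $B$'s for which the corresponding orbit of the Teichm\"uller flow is generic for the Masur-Veech measure on the principal stratum, so that Oseledets theorem applies and yields a strictly sub-linear deviation governed by $\nu_2<\nu_1=1$ in the standard normalization, equivalently $\alpha(g_j)<1$ in Zorich's.
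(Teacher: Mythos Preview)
The paper is a survey and does not supply a proof of this theorem: it states the result, attributes it to Zorich~1994, and immediately remarks that it is a corollary of the more general Zorich--Kontsevich--Avila theorem on deviations of leaves of closed 1-forms on Riemann surfaces, which it then states (also without proof). Your outline---Maier-type decomposition into components $\cC_j$, reduction on each $\cC_j$ to an interval exchange/abelian differential, Masur--Veech unique ergodicity for the asymptotic cycle in~(2), and the Kontsevich--Zorich cocycle over the Teichm\"uller flow for the deviation exponent in~(3)---is precisely the way one deduces this statement from that general theorem, so you are fully aligned with what the paper indicates.

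One point in your sketch is imprecise and worth sharpening. You explain the hypothesis $n\geq 4g-3$ as guaranteeing rational independence of the absolute periods of $\omega_j$. But rational independence of the periods over $H_1(\cC_j,\bZ)$ holds for almost every $B$ as soon as the embedding has full topological rank, with no lower bound on $n$ needed. The number $4g-3$ is instead the (complex) dimension of the principal stratum $\cH(1,\dots,1)$ in genus $g$, equivalently the number of real relative-period coordinates for the measured foliation $\Re(\omega)$. The condition $n\geq 4g-3$ is what ensures that the $n$-parameter family $B\mapsto i^*B$ is large enough to be absolutely continuous with respect to the Masur--Veech measure on the stratum, so that ``almost every $B\in\bR P^{n-1}$'' genuinely yields a Masur--Veech generic abelian differential on each $\cC_j$. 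Only then are you entitled to invoke Oseledets and the Forni spectral-gap input $\nu_2<1$. You correctly flag this transfer of genericity as the delicate step; just be aware that it is a dimension/transversality argument rather than a rationality argument.
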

It is noteworthy that Zorich was led by the study of this case to prove, jointly with Kontsevich~\cite{KZ03} and Avila~\cite{AV05,AV07}
a more general, important and far-reaching theorem on foliations induced by closed 1-forms on surfaces, of which the previous result is a corollary: 
\begin{theorem}[Zorich, Kontsevich and Avila, 2007]
  For almost all abelian differential $\omega$ on $M^2_g$ without maxima and minima in any connected component
  of any stratum $\cH(k_1,\cdots,k_s)$ of the moduli space of Abelian differentials with zeros of index $k_1,\dots,k_s$,
  for the foliation of the closed 1-form $\omega_0=\Re(\omega)$ there exists a complete flag of subspaces 
  $$
  V_1\subset V_2\subset \dots\subset V_g\subset H_1(M^2_g,\bR)
  $$
  with the following properties:
  \begin{enumerate}
    \item For any leaf $\gamma$ and point $p_0\in\gamma$ we have
      $$
      \lim_{t\to\infty}\frac{c_{p_0}(t)}{t}=c\,,
      $$
      where the non-zero cycle $c\in H_1(M^2_g,\bR)$ is proportional to the Poincar\'e dual of the cohomology class of $\omega_0$
      and $V_1=\sp\{c\}$.
    \item For any $\psi\in Ann(V_j)\subset H_1(M^2_g,\bR)\setminus Ann(V_{j+1})$, any leaf $\gamma$ and any point $p_0\in\gamma$,
      $$
      \limsup_{t\to\infty}\frac{\log \langle\psi,c_{p_0}(t)\rangle}{\log t}=\nu_{i+1}\;\hbox{ for all }i=1,\dots,g-1.
      $$
    \item For any $\psi\in Ann(V_g)\subset H_1(M^2_g,\bR)$, any leaf $\gamma$ and any point $p_0\in\gamma$,
      $$
      \limsup_{t\to\infty}\frac{\log \langle\psi,c_{p_0}(t)\rangle}{\log t}=0
      $$
    \item For any $\psi\in Ann(V_g)\subset H_1(M^2_g,\bR)$, $\|\psi\|=1$, any leaf $\gamma$ and any point $p_0\in\gamma$,
      $\langle\psi,c_{p_0}(t)\rangle$ is bounded from above by a constant depending only on the foliation and the norm
      chosen on the cohomology. All limits converge uniformly and the numbers $2,1+\nu_2,\dots,1+\nu_g$ are the top $g$
      exponents of the Teichmuller geodesic flow on the corresponding connected component of the stratum $\cH(k_1,\cdots,k_s)$.
  \end{enumerate}
\end{theorem}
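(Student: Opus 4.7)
The plan is to reduce the continuous-time foliation problem on $M^2_g$ to a discrete symbolic dynamical system, then to interpret the asymptotics of the cycle $c_{p_0}(t)$ as a cocycle over the Teichm\"uller flow, and finally to invoke Oseledets' multiplicative ergodic theorem together with the Avila--Viana theorem on simplicity of the Kontsevich--Zorich spectrum. First I would choose a transversal segment $I\subset M^2_g$ to the foliation of $\omega_0=\Re(\omega)$; the first-return map $T:I\to I$ is an interval exchange transformation (IET) whose combinatorics is determined by the stratum $\cH(k_1,\dots,k_s)$. Each return of a leaf $\gamma$ to $I$ produces an integer homology cycle $\gamma_n\in H_1(M^2_g,\bZ)$, and the deviation of $c_{p_0}(t)$ from the ``average'' direction is captured by the growth of Birkhoff sums $S_n\phi = \sum_{k=0}^{n-1}\phi\circ T^k$ for cohomology-valued functions $\phi$ on $I$.

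Next I would renormalize via Rauzy--Veech induction, whose suspension is essentially the Teichm\"uller geodesic flow on the stratum. On the Hodge bundle $H_1(M^2_g,\bR)\to\cH(k_1,\dots,k_s)$, the renormalization procedure induces the \emph{Kontsevich--Zorich cocycle}; its action on the vector of return cycles records precisely the evolution of $\gamma_n$ under renormalization. Applying Oseledets' theorem to this symplectic cocycle, together with the Masur--Veech theorem that guarantees ergodicity of the Teichm\"uller flow on each connected component with respect to a finite absolutely continuous invariant measure, yields a full Lyapunov spectrum
\begin{equation*}
1=\lambda_1\geq\lambda_2\geq\dots\geq\lambda_g\geq 0\geq -\lambda_g\geq\dots\geq -\lambda_1=-1,
\end{equation*}
and a corresponding Oseledets filtration. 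The flag $V_1\subset V_2\subset\dots\subset V_g$ is extracted from this filtration: $V_j$ is the span of Oseledets subspaces with exponent $\geq\lambda_j$. Setting $\nu_j:=\lambda_j$ and identifying $V_1$ with the line spanned by the Poincar\'e dual of $[\omega_0]$ (which is automatic since $\lambda_1=1$ is simple with this eigenvector), items (1) and (2) of the conclusion follow by relating $\limsup\tfrac{\log\langle\psi,c_{p_0}(t)\rangle}{\log t}$ to the Lyapunov exponent of $\psi$ under the dual cocycle, via the standard dictionary between continuous flow time and renormalization time.

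For (3) and (4), one uses the symplectic structure: pairings with vectors in $Ann(V_g)$ correspond to the neutral Oseledets block, where \emph{a priori} only sub-polynomial growth is guaranteed. Here I would invoke Forni's theorem on the spectral gap together with the refined argument showing that on the isotropic annihilator the cocycle is in fact bounded (not merely sub-polynomial), which gives the uniform bound by a constant depending on the foliation and the norm. The uniform convergence statements come from the continuity of the Oseledets decomposition along almost every Teichm\"uller orbit and from standard estimates comparing leafwise time with Rauzy--Veech step number.

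The main obstacle is the verification that the filtration is \emph{strict}, i.e.\ that $\lambda_1>\lambda_2>\dots>\lambda_g$ (simplicity of the KZ spectrum). Forni's theorem gives $\lambda_2<1$, which already implies $\alpha(g_j)<1$ in the earlier corollary, but the full flag requires the Avila--Viana theorem, whose proof occupies~\cite{AV05,AV07} and proceeds by checking that the Rauzy--Veech monoid satisfies a twisting/pinching criterion \`a la Goldsheid--Margulis. I would invoke this result as a black box; the remaining content is bookkeeping to translate between the ergodic-theoretic language on the stratum and the geometric statement about individual leaves $\gamma$ on a generic $M^2_g$.
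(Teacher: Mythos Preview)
The paper is a survey and does not give its own proof of this theorem: it is simply stated with attribution to Zorich, Kontsevich and Avila and supported by the citations~\cite{KZ03,AV05,AV07}, with no accompanying argument or proof outline. So there is nothing in the paper to compare your proposal against.

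That said, your outline is essentially the correct architecture of the proof as it appears in the cited literature: reduction to an interval exchange via a transversal, renormalization by Rauzy--Veech induction, identification of the Kontsevich--Zorich cocycle over the Teichm\"uller flow, Masur--Veech ergodicity plus Oseledets to obtain the Lyapunov spectrum and filtration, Forni for $\lambda_2<1$, and Avila--Viana for full simplicity of the spectrum. One point that deserves more care is item~(4): boundedness of $\langle\psi,c_{p_0}(t)\rangle$ for $\psi\in Ann(V_g)$ is strictly stronger than the sub-polynomial growth that zero Lyapunov exponent alone would give, and it does not follow from Forni's spectral-gap theorem; it is a separate combinatorial fact about the Rauzy--Veech cocycle (going back to Zorich's original deviation estimates) that on the complementary Lagrangian the relevant Birkhoff sums are uniformly bounded. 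You gesture at this with ``the refined argument,'' but in a full write-up you would need to make that step explicit rather than fold it into Forni's contribution.
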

\noindent
{\bf Open questions:} what can be said about the asymptotic direction of open leaves in the intermediate case $3<n<4g-3$?
%
%
\bibliography{refs}
\end{document}